\documentclass[a4paper,11pt]{article}

\usepackage[T1]{fontenc}
\usepackage{lmodern}
\usepackage[
    top=1.8cm,
    bottom=2.2cm,
    left=2.4cm,
    right=2.4cm
]{geometry}

\usepackage{amsmath,amssymb,amsthm}
\usepackage{cite}
\usepackage{enumitem}
\usepackage{array}
\usepackage{tabularx}
\usepackage{xcolor}

\usepackage{hyperref}

\hypersetup{
    colorlinks=true,
    linkcolor=blue,
    citecolor=blue,
    urlcolor=blue,
    pdftitle={
        Fundamental Gaps for the Dirichlet p-Laplacian with Convex Potentials:
        Sharp One-Dimensional Bounds and a Higher-Dimensional Dichotomy
    },
    pdfauthor={
        Rui Chen, Daniel Hauer
    }
}

\numberwithin{equation}{section}

\theoremstyle{plain}
\newtheorem{theorem}{Theorem}[section]
\newtheorem{lemma}[theorem]{Lemma}
\newtheorem{proposition}[theorem]{Proposition}
\newtheorem{corollary}[theorem]{Corollary}

\theoremstyle{definition}

\renewcommand{\phi}{\varphi}

\newcolumntype{Y}{>{\centering\arraybackslash}X}

\allowdisplaybreaks

\begin{document}

%==================================================
% Title and authors
%==================================================
\begin{center}
\vspace*{-0.7cm}

{\large\bfseries
Fundamental Gaps for the Dirichlet \(p\)-Laplacian with Convex Potentials: Sharp One-Dimensional Bounds and a Higher-Dimensional Dichotomy
\par}

\vspace{0.7cm}

{\normalsize

Rui Chen, Daniel Hauer

\par}
\end{center}

\vspace{0.1cm}

%==================================================
% Abstract
%==================================================
\begin{abstract}
We study fundamental gaps for the Dirichlet \(p\)-Laplacian on bounded
convex domains with convex potentials. In one dimension, we prove the
sharp inequality
\[
\lambda_{2,p}(I_D,V)-\lambda_{1,p}(I_D,V)
\geq
(p-1)(2^p-1)\left(\frac{\pi_p}{D}\right)^p
\]
for every \(p>1\) and every convex potential, with equality precisely
for constant potentials. For \(N\geq2\), we identify a sharp transition
at \(p=2\) through collapsing smooth convex domains: the gap vanishes
for \(1<p<2\), remains of order \(D^{-2}\) for \(p=2\), and diverges for
\(p>2\). In the regime \(p\geq2\), we prove log-concavity of the positive
first eigenfunction by a regularization and two-point maximum principle.
We then establish a degenerate weighted Poincar\'e inequality, which
yields quantitative stability estimates for the \(L^p\)-Poincar\'e
inequality and, in turn, quantitative lower bounds for the fundamental
gap. 
For zero potential, we further obtain an enhanced gap estimate involving
both the first eigenvalue and the diameter. Finally, we prove existence of diameter-normalized gap minimizers for
\(p>2\) and show that they degenerate as \(p\downarrow2\), whereas for
\(p=2\) the optimal gap is not attained by any bounded
\(N\)-dimensional convex domain.
\end{abstract}

\medskip

\noindent\textbf{Keywords:}
Dirichlet \(p\)-Laplacian; fundamental gap; convex potential;
log-concavity.

\medskip

\noindent\textbf{2020 Mathematics Subject Classification:}
Primary 35P30; Secondary 35J92, 35P15, 49R05.

\bigskip

%==================================================
% Table of contents
%==================================================
\tableofcontents

\bigskip

%==================================================
% Main text
%==================================================

\section{Introduction and Main Results}

We consider the fundamental gap associated with the Dirichlet
eigenvalue problem
\begin{equation}\label{eq:introduction-eigenvalue-problem}
\begin{cases}
-\Delta_pu+V(x)|u|^{p-2}u
=
\lambda |u|^{p-2}u,
&x\in\Omega,\\[1mm]
u=0,
&x\in\partial\Omega,
\end{cases}
\end{equation}
where \(1<p<\infty\), \(\Omega\subset\mathbb R^N,\,N\ge 1\) is a bounded convex
domain, and
\(V\in C(\overline\Omega)\) is convex. Here $\Delta_p u
:=
\operatorname{div}\bigl(|\nabla u|^{p-2}\nabla u\bigr)$
denotes the \(p\)-Laplacian. Let
\begin{equation}\label{eq:epsilv}
    \mathcal K
:=
\left\{
u\in W_0^{1,p}(\Omega):
\|u\|_{L^p(\Omega)}=1
\right\},
\qquad
\mathcal E_V(u)
:=
\int_\Omega\bigl(|\nabla u|^p+V|u|^p\bigr)\,dx.
\end{equation}
Using Lusternik--Schnirelmann theory based on the
Krasnosel'ski\u{\i} genus, we define the variational eigenvalues by (see \cite{CuestaRamosQuoirin2009,Amann1972,GarciaAzoreroPeral1987,Szulkin1988,CuestaDeFigueiredoGossez1999,FuscoMukherjeeZhang2019,
AudouxBobkovParini2018})
\begin{equation}\label{eq:eigenvalue}
    \lambda_{k,p}(\Omega,V)
:=
\inf_{\substack{A\subset\mathcal K\ \mathrm{compact,\ symmetric}\\
\gamma(A)\geq k}}
\ \sup_{u\in A}\mathcal E_V(u),
\qquad k\geq1.
\end{equation}
Accordingly, the fundamental gap is defined as
\[
\Gamma_p(\Omega,V)
:=
\lambda_{2,p}(\Omega,V)-\lambda_{1,p}(\Omega,V).
\]

\smallskip

The fundamental gap for the linear case \(p=2\) has long been studied
in analysis, geometry, and mathematical physics. It represents the
energy separation between the ground state and the first excited state
and determines the leading rate of convergence to equilibrium; see
\cite{vanDenBerg1983,Smits1996}. Early developments relied on the
log-concavity theorem of Brascamp and Lieb
\cite{BrascampLieb1976} and the sharp Payne--Weinberger
Poincar\'e inequality \cite{PayneWeinberger1960}. For convex Euclidean
domains of diameter \(D\) and convex potentials, Singer, Wong, Yau, and
Yau obtained the first dimension-independent estimate $\Gamma_2(\Omega,V)\geq\frac{\pi^2}{4D^2},$
which was later improved to
\[
\Gamma_2(\Omega,V)\geq\frac{\pi^2}{D^2}
\]
by Yu and Zhong, with further refinements due to Ling; see
\cite{SingerWongYauYau1985,YuZhong1986,Ling1993}. Sharp
one-dimensional results were proved by Ashbaugh--Benguria and Lavine;
see \cite{AshbaughBenguria1989,Lavine1994}. Andrews and Clutterbuck ultimately
resolved the fundamental gap conjecture by establishing the optimal
bound
\[
\Gamma_2(\Omega,V)\geq\frac{3\pi^2}{D^2};
\]
see \cite{AndrewsClutterbuck2011}. An alternative elliptic proof was
later given by Ni \cite{Ni2013}.

The Euclidean result has inspired analogous questions on curved
spaces. Sharp comparison results for convex spherical domains were
obtained in
\cite{SetoWangWei2019,HeWeiZhang2020,DaiSetoWei2021}, while spectral-gap
on compact and Bakry--\'Emery manifolds, as well as on
positively curved surfaces, were developed in
\cite{OdenSungWang1999,AndrewsNi2012,KhanNguyenTuerkoenWei2025}.

Negative curvature exhibits a substantially different behavior. Shih
showed that log-concavity of the first eigenfunction may fail in
hyperbolic space \cite{Shih1989}, and convex domains of fixed
diameter may have arbitrarily small fundamental gap; see
\cite{BourniEtAl2021,BourniEtAl2022}. This phenomenon was extended to
more general negatively curved manifolds in \cite{KhanNguyen2024}.
Moreover, among convex potentials on hyperbolic convex domains,
constant potentials need not minimize the fundamental gap
\cite{ClutterbuckJackelNguyen2025}. Positive results for horoconvex
domains and related conformally flat settings were obtained
in \cite{NguyenStancuWei2022,KhanTuerkoen2026}.

\smallskip

In contrast with the linear theory, fundamental-gap for the
\(p\)-Laplacian are far less developed and have been studied mainly in
one dimension. Cheng, Lian, and Wang considered gap minimization under
single-well assumptions \cite{ChengLianWang2014}, while Andrews,
Clutterbuck, and Hauer obtained further one-dimensional estimates under
various boundary conditions and potential assumptions
\cite{AndrewsClutterbuckHauer2021}. Related one-dimensional gap and
eigenvalue-ratio problems were also studied in
\cite{ChenCheng2020,WenZhou2026}; see also the references therein for
further developments. However, the sharp Dirichlet fundamental-gap bound for arbitrary convex
potentials and all \(p>1\) remained open beyond the linear case.

\smallskip

We now establish, to the best of our knowledge for the first time, the
sharp one-dimensional fundamental-gap estimate for arbitrary \(p>1\)
and convex potentials. For \(p=2\), Lavine proved the sharp one-dimensional fundamental-gap
estimate for convex potentials in \cite{Lavine1994}; a key ingredient
in his argument is the integral identity established in
\cite[Lemma~3.3]{Lavine1994}. For the first
two eigenfunctions \(u_1(\cdot;a)\) and \(u_2(\cdot;a)\) associated with
the linear potential \(V(x)=ax\) on
\(\Omega=(-D/2,D/2)\), a natural nonlinear analogue is
\begin{align*}
&(2p+1)a
\int_{\Omega}
x^2\Bigl(
|u_2(x;a)|^p-|u_1(x;a)|^p
\Bigr)\,dx
\\
&\quad=
2p\left[
\lambda_{2,p}(\Omega,V)
\int_{\Omega}x|u_2(x;a)|^p\,dx
-
\lambda_{1,p}(\Omega,V)
\int_{\Omega}x|u_1(x;a)|^p\,dx
\right]
\\
&\qquad
-2(p-1)
\int_{\Omega}
\Bigl(
u_2(x;a)\phi_p(u_2'(x;a))
-
u_1(x;a)\phi_p(u_1'(x;a))
\Bigr)\,dx,
\end{align*}
where \(\phi_p(s):=|s|^{p-2}s\). The proof follows the same integration
argument as \cite[Lemma~3.3]{Lavine1994} and is omitted. When \(p=2\), the last integral vanishes, recovering the identity used
in Lavine's proof. For \(p\neq2\), however,
\[
u_j\phi_p(u_j')
=
u_j|u_j'|^{p-2}u_j'
\]
is no longer the derivative of a function depending only on \(u_j\);
hence the additional term does not vanish in general. This is the
essential obstruction to extending Lavine's argument directly to the
nonlinear \(p\)-Laplacian.

Our approach is instead based on the nodal decomposition of the second
eigenfunction, which provides a genuinely nonlinear alternative to
Lavine's method and avoids the obstruction described above. In one
dimension, the second eigenfunction has a unique zero, and its
restrictions to the two nodal intervals are first Dirichlet
eigenfunctions. Hence the second eigenvalue is the common first eigenvalue of the two
nodal subproblems. After translation and scaling, the problem is reduced
to the one-parameter function
\[
\widehat\mu(\beta)
:=
\mu(\beta)-\frac{\beta}{2},
\qquad
\mu(\beta)
:=
\lambda_{1,p}\bigl((0,1),\beta x\bigr),
\qquad
\beta\in\mathbb R.
\]
More precisely, the fundamental gap can be expressed in terms of
\(\widehat\mu(\beta)\). The following elementary observation is the key to the argument.

\begin{lemma}\label{lem:mu-hat-concavity-symmetry}
The function $\widehat\mu(\beta)$
is even and concave on \(\mathbb R\). Consequently,
\[
\widehat\mu(\beta)
\leq
\widehat\mu(0)
=
\mu(0)
\qquad
\text{for every }\beta\in\mathbb R.
\]
\end{lemma}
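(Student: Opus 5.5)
Evenness and concavity follow from two simple facts: the reflection \(x\mapsto 1-x\) of \((0,1)\), and the fact that \(\mu(\beta)\) is an infimum of functions that are affine in \(\beta\).

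\textbf{Step 1: evenness.} Take \(u\in W_0^{1,p}(0,1)\) and set \(v(x):=u(1-x)\). Then \(v\in W_0^{1,p}(0,1)\), \(\|v\|_{L^p}=\|u\|_{L^p}\) and \(\int_0^1|v'|^p=\int_0^1|u'|^p\). Moreover,
\[
\int_0^1 \beta x|v|^p\,dx=\int_0^1\beta(1-x)|u|^p\,dx=\beta\int_0^1|u|^p\,dx-\beta\int_0^1x|u|^p\,dx .
\]
Hence, for \(\|u\|_{L^p}=1\),
\[
\mathcal E_{\beta x}(v)=\mathcal E_{-\beta x}(u)+\beta .
\]
Since \(u\mapsto v\) is a bijection of \(\mathcal K\), taking the infimum gives \(\mu(\beta)=\mu(-\beta)+\beta\). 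Subtracting \(\beta/2\) from both sides yields
\[
\widehat\mu(\beta)=\mu(-\beta)+\tfrac{\beta}{2}=\widehat\mu(-\beta).
\]
Here I use that \(\lambda_{1,p}\) coincides with the Rayleigh infimum \(\inf_{u\in\mathcal K}\mathcal E_V(u)\), which holds because genus-one sets are pairs \(\{\pm u\}\).

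\textbf{Step 2: concavity.} For each fixed \(u\in\mathcal K\), the map
\[
\beta\mapsto \mathcal E_{\beta x}(u)-\tfrac{\beta}{2}=\int_0^1|u'|^p\,dx+\beta\int_0^1\Bigl(x-\tfrac12\Bigr)|u|^p\,dx
\]
is affine in \(\beta\). Therefore \(\widehat\mu(\beta)\), being the pointwise infimum of this family over \(u\in\mathcal K\), is concave. It is also finite everywhere, since \(|\beta x|\le|\beta|\) on \((0,1)\) gives \(\mu(0)-|\beta|\le\mu(\beta)\le\mu(0)+|\beta|\). A finite concave function on \(\mathbb R\) is automatically continuous, so no further regularity argument is needed.

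\textbf{Step 3: maximum at \(\beta=0\).} For any \(\beta\), evenness and concavity give
\[
\widehat\mu(0)=\widehat\mu\Bigl(\tfrac12\beta+\tfrac12(-\beta)\Bigr)\ge\tfrac12\widehat\mu(\beta)+\tfrac12\widehat\mu(-\beta)=\widehat\mu(\beta).
\]
Finally, \(\widehat\mu(0)=\mu(0)\) by definition.

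There is no real obstacle in this argument. The only point needing care is the identification of \(\lambda_{1,p}\) from \eqref{eq:eigenvalue} with the plain Rayleigh infimum over \(\mathcal K\). This is standard: every compact symmetric set of genus at least one contains a pair \(\{\pm u\}\), and \(\{\pm u\}\) itself has genus one, so the inf-sup reduces to \(\inf_{u\in\mathcal K}\mathcal E_V(u)\).
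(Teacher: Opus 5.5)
Your proof is correct and follows the paper's argument: the reflection $x\mapsto 1-x$ gives $\mu(-\beta)=\mu(\beta)-\beta$ and hence evenness, concavity comes from $\mu$ (equivalently $\widehat\mu$) being a pointwise infimum of functions affine in $\beta$, and evenness plus concavity put the maximum at $\beta=0$. Your extra justification that the genus-one inf--sup in \eqref{eq:eigenvalue} reduces to the Rayleigh infimum over $\mathcal K$ is correct; the paper simply invokes the variational characterization without it.
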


The evenness follows from reflection across the midpoint of the unit
interval, together with the identity $\mu(-\beta)=\mu(\beta)-\beta,$
whereas the concavity follows from the variational characterization of
\(\mu(\beta)\) as the infimum of affine functions of \(\beta\).

This leads to the following sharp one-dimensional fundamental gaps.

\begin{theorem}
\label{thm:sharp-one-dimensional-fundamental-gap}
Let
\[
I_D:=\left(-\frac{D}{2},\frac{D}{2}\right),
\qquad p>1,
\qquad D>0.
\]
Then the following statements hold.

\begin{enumerate}
\item
For every \(a\in\mathbb R\),
\[\lambda_{2,p}(I_D,ax)-\lambda_{1,p}(I_D,ax)
\geq
(p-1)(2^p-1)
\left(\frac{\pi_p}{D}\right)^p.\]
Equality holds if and only if \(a=0\).

\item
For every convex potential
\(V\in C(\overline{I_D})\),
\[\lambda_{2,p}(I_D,V)-\lambda_{1,p}(I_D,V)
\geq
(p-1)(2^p-1)
\left(\frac{\pi_p}{D}\right)^p.\]
Equality holds if and only if \(V\) is
constant.
\end{enumerate}
\end{theorem}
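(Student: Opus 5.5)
The plan is to reduce everything to the one-parameter function \(\widehat\mu\) of Lemma~\ref{lem:mu-hat-concavity-symmetry}, first for linear potentials \(V=ax\), and then to pass from linear to general convex potentials by a Lavine-type reduction.

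\emph{Step 1 (scaling formulas).} For an interval \((s,s+\ell)\) with potential \(ax\), translate by \(s\) and rescale \(x\mapsto x/\ell\). This gives
\[
\lambda_{1,p}\bigl((s,s+\ell),ax\bigr)=as+\ell^{-p}\mu(a\ell^{p+1})
=a\Bigl(s+\tfrac{\ell}{2}\Bigr)+\ell^{-p}\widehat\mu(a\ell^{p+1}).
\]
So the first eigenvalue of any interval is \(a\) times its midpoint plus a rescaled \(\widehat\mu\). For \(I_D\) the midpoint is \(0\), and Lemma~\ref{lem:mu-hat-concavity-symmetry} gives
\[
\lambda_{1,p}(I_D,ax)=D^{-p}\widehat\mu(aD^{p+1})\le D^{-p}\mu(0)=(p-1)\Bigl(\frac{\pi_p}{D}\Bigr)^p,
\]
with equality only at \(a=0\). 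The value \(\mu(0)=(p-1)\pi_p^p\) is the classical one. Strictness comes from concavity plus evenness: if \(\widehat\mu(\beta)=\widehat\mu(0)\) with \(\beta\neq0\), then \(\widehat\mu\) is constant on \([-|\beta|,|\beta|]\). That would make \(\mu\) affine with slope \(1/2\) there, contradicting \(\mu'(0)=\int_0^1x|u|^p\,dx=1/2\) only if we have more information. I will instead check strictness directly via a second-order (Hadamard) computation at \(\beta=0\).

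\emph{Step 2 (the second eigenvalue).} Let \(z\) be the unique zero of \(u_2\), splitting \(I_D\) into intervals of lengths \(\ell_1+\ell_2=D\) with midpoints \(m_1<0<m_2\). Then \(\lambda_{2,p}=a m_i+\ell_i^{-p}\widehat\mu(a\ell_i^{p+1})\) for \(i=1,2\). Combining the two with weights \(w_i\ge0\) chosen so that \(w_1m_1+w_2m_2=0\) eliminates the drift term \(a\):
\[
\lambda_{2,p}=\sum_i w_i\ell_i^{-p}\widehat\mu(a\ell_i^{p+1}).
\]
The goal is \(\lambda_{2,p}(I_D,ax)-\lambda_{2,p}(I_D,0)\ge \lambda_{1,p}(I_D,ax)-\lambda_{1,p}(I_D,0)\), since at \(a=0\) the gap is exactly \((p-1)(2^p-1)(\pi_p/D)^p\). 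I would compare the concave function \(\widehat\mu\) at the three scales \(\beta=aD^{p+1}\), \(a\ell_1^{p+1}\), \(a\ell_2^{p+1}\). Since \(\ell_i^{p+1}\le D^{p+1}\), concavity and evenness give \(\widehat\mu(a\ell_i^{p+1})\ge\widehat\mu(aD^{p+1})\), as \(\widehat\mu\) is nonincreasing in \(|\beta|\). It then remains to use that the drift terms \(am_i\) and the length changes \(\ell_i\neq D/2\) only raise \(\lambda_{2,p}\) relative to the decrease of \(\lambda_{1,p}\). An alternative route is to show that \(a\mapsto\lambda_{2,p}(I_D,ax)\) is even (by reflection) and has \(a=0\) as a minimizer. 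To do this I would write \(\lambda_{2,p}=\min_z\max\{f_a(z),g_a(z)\}\), where \(f_a,g_a\) are the first eigenvalues of the two pieces, and use the \(\widehat\mu\)-representation of each piece.

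\emph{Step 3 (convex potentials).} Given convex \(V\), I would first approximate by piecewise-linear convex potentials. Then I would run a Lavine-type argument. The Hadamard derivative of the gap in direction \(W\) is \(\int W(|u_2|^p-|u_1|^p)\). The function \(|u_2|^p-|u_1|^p\) (with normalized eigenfunctions) changes sign exactly twice: negative in the middle, positive near the endpoints. Hence, among convex potentials, replacing \(V\) by a suitable linear function matching it at the two sign-change points does not increase the gap. This reduces part 2 to part 1. For the equality case, strict decrease along this deformation occurs unless \(V\) is already linear, and then part 1 forces \(a=0\).

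\emph{Main obstacle.} The hard step is the lower bound for \(\lambda_{2,p}\) in Step~2. Concavity of \(\widehat\mu\) controls \(\lambda_{1,p}\) from above immediately, but the nodal point moves with \(a\), so both pieces have \(a\)-dependent lengths and midpoints. A clean comparison requires an inequality for \(\widehat\mu\) across different scales \(\ell^{p+1}\), and this must hold uniformly for all \(p>1\). I would first try the weighted elimination of \(a\) combined with the monotonicity of \(\widehat\mu\) in \(|\beta|\). If that is too lossy, I would fall back on a two-point sign-change argument for \(|u_2|^p-|u_1|^p\), which also underlies Step~3.
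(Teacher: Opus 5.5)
You reduce Step~2 correctly, and your reduction is exactly the paper's. With $\ell_1=rD$ and $\ell_2=(1-r)D$, the midpoints are $m_1=-(1-r)D/2$ and $m_2=rD/2$, the weights are $w_1=r$ and $w_2=1-r$, and
\[
\lambda_{2,p}(I_D,ax)=r\,\eta(rD,a)+(1-r)\,\eta((1-r)D,a),\qquad \eta(\ell,a)=\ell^{-p}\widehat\mu(a\ell^{p+1}).
\]
The genuine gap is the cross-scale comparison that you yourself flag as the main obstacle. Write $F:=\widehat\mu(0)-\widehat\mu\ge0$ and $A:=aD^{p+1}$. The two formulas give
\[
\begin{aligned}
D^p\Gamma_p(I_D,ax)={}&\mu(0)\bigl(r^{1-p}+(1-r)^{1-p}-1\bigr)\\
&-r^{1-p}F(Ar^{p+1})-(1-r)^{1-p}F\bigl(A(1-r)^{p+1}\bigr)+F(A).
\end{aligned}
\]
Monotonicity of $\widehat\mu$ in $|\beta|$ only yields $F(Ar^{p+1})\le F(A)$. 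The losses are then bounded merely by $\bigl(r^{1-p}+(1-r)^{1-p}\bigr)F(A)\ge2^pF(A)$, which the single gain $F(A)$ coming from $\lambda_{1,p}$ cannot absorb. You are left with $D^p\Gamma_p\ge\widehat\mu(A)\bigl(r^{1-p}+(1-r)^{1-p}-1\bigr)$. This does not give $(2^p-1)\mu(0)$ without further information on $r$, and it is vacuous once $\widehat\mu(A)\le0$.

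Your sentence that the length changes ``only raise $\lambda_{2,p}$ relative to the decrease of $\lambda_{1,p}$'' restates what must be proved rather than proving it. The alternative route is false as stated. Test with two disjointly supported bumps near the endpoint where $ax$ is most negative: this shows $\lambda_{2,p}(I_D,ax)\to-\infty$ as $|a|\to\infty$, so $a=0$ does not minimize $\lambda_{2,p}$; only the difference is controlled. The fallback via the sign pattern of $|u_2|^p-|u_1|^p$ is Lavine's route. For linear potentials it relies on an integral identity that, as the introduction explains, acquires a non-vanishing extra term when $p\neq2$.

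The missing idea is the chord inequality. Since $F$ is convex with $F(0)=0$, one has $F(\theta\beta)\le\theta F(\beta)$ for $0\le\theta\le1$. Taking $\theta=r^{p+1}$ and $\theta=(1-r)^{p+1}$ converts the prefactors $r^{1-p}$ and $(1-r)^{1-p}$ into $r^2$ and $(1-r)^2$. Because $r^2+(1-r)^2\le1$ and $F(A)\ge0$, the total loss is at most $F(A)$. Hence $D^p\Gamma_p\ge\mu(0)\bigl(r^{1-p}+(1-r)^{1-p}-1\bigr)\ge(2^p-1)\mu(0)$.

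This also settles the equality case, which your plan never reaches. Equality forces $r=1/2$ by strict convexity of $r\mapsto r^{1-p}+(1-r)^{1-p}$. The two nodal identities $\lambda_{2,p}=\mp aD/4+\eta(D/2,a)$ then give $a=0$. In particular, the second-order Hadamard computation for strictness in Step~1 is unnecessary.

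Finally, in Step~3 the first-order computation only shows that deforming $V$ toward the affine interpolant at the two sign-change points decreases the gap infinitesimally. Those points move along the deformation. So the global comparison $\Gamma_p(I_D,V)\ge\Gamma_p(I_D,ax)$, strict for non-affine $V$, needs an additional argument. The paper does not reprove this either; it imports it as the Dirichlet analogue of the Andrews--Clutterbuck--Hauer comparison theorem.
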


Having settled the one-dimensional problem, we now turn to the
fundamental gap in dimensions \(N\geq2\). For \(p\neq2\), the
higher-dimensional theory is still largely undeveloped. To the best of
our knowledge, the only directly relevant result is due to Yessirkegenov
and Zhangirbayev \cite{YessirkegenovZhangirbayev2026}, who themselves
describe their estimate as the first of this type. For \(p\geq2\) and
\(V\equiv0\), they proved
\[
\Gamma_p(\Omega,0)
\geq
2^{2-p}\left(\frac{\pi_p}{D}\right)^p
\inf_{c\in\mathbb R}\|u_2-cu_1\|_{L^p(\Omega)}^p.
\]
The lower bound, however, still contains an eigenfunction-dependent
factor and therefore does not provide a sharp universal
diameter-only estimate. For \(p=2\), the sharp higher-dimensional argument of Andrews and
Clutterbuck is built on two essential ingredients: the sharp modulus of
concavity of \(\log u_1\) and a one-dimensional comparison for the
quotient \(u_2/u_1\), which satisfies a linear Neumann equation with
drift. Neither mechanism has a direct nonlinear counterpart when
\(p\neq2\): the sharp modulus associated with the one-dimensional
\(p\)-model is difficult to propagate in higher dimensions, while the
quotient \(u_2/u_1\) no longer satisfies a closed linear equation.
These are fundamental structural obstacles to extending the sharp
linear theory to the \(p\)-Laplacian.

\smallskip

To the best of our knowledge, we provide the first systematic study of
the fundamental gap for the Dirichlet \(p\)-Laplacian in dimensions
\(N\geq2\) over the full range \(p>1\) with a convex
potential. Our first step is to analyze the gap along collapsing convex domains.
Motivated by the study of fundamental gaps on collapsing domains
\cite{LuRowlett2013,LuRowlett2014} and by spectral asymptotics on thin
domains
\cite{BorisovFreitas2010,FriedlanderSolomyak2009,BrianiButtazzoPrinari2022},
we consider a family of smooth convex domains collapsing to a segment.
The following result shows that the fundamental gap collapses to zero
for \(1<p<2\), whereas at \(p=2\) it converges to the sharp
one-dimensional value \(3\pi^2/D^2\). As shown later, the behavior is
reversed for \(p>2\), where the gap diverges along the same collapsing
family.

\begin{proposition}
\label{prop:smooth-quadratic-threshold-collapse}
Let \(N\geq2\), \(D>0\), and let \(m_\varepsilon\in\mathbb N\) satisfy $m_\varepsilon\to\infty$ and $m_\varepsilon\varepsilon^2\to\infty$ as $\varepsilon\downarrow0.$ Set
\[
\Omega_\varepsilon
:=
\left\{
(x,y)\in\mathbb R\times\mathbb R^{N-1}:
\left(\frac{2x}{D}\right)^{2m_\varepsilon}
+
\left(\frac{|y|}{\varepsilon}\right)^{2m_\varepsilon}
<1
\right\}.
\]
Then \(\Omega_\varepsilon\) are smooth bounded convex domains,
\[
\overline{\Omega_\varepsilon}
\to
\left[-\frac D2,\frac D2\right]\times\{0\}
\quad\text{in the Hausdorff sense},
\qquad
\operatorname{diam}(\Omega_\varepsilon)\to D,
\]
and, as \(\varepsilon\downarrow0\),
\[
\lambda_{2,p}(\Omega_\varepsilon,0)-\lambda_{1,p}(\Omega_\varepsilon,0)
\to
\begin{cases}
0, & 1<p<2,\\[1mm]
\dfrac{3\pi^2}{D^2}, & p=2.
\end{cases}
\]
\end{proposition}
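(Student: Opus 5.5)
The plan is to compare $\Omega_\varepsilon$ with the cylinders $C_\varepsilon^{\pm}:=(-\tfrac{D}{2}r_\pm,\tfrac D2 r_\pm)\times B_{\varepsilon r_\pm}^{N-1}$, which inner and outer approximate it, and then analyse the spectrum of thin cylinders by separation of variables. The geometric claims are routine. The defining function $F(x,y)=(2x/D)^{2m}+(|y|/\varepsilon)^{2m}$ is a convex polynomial with nonvanishing gradient on $\{F=1\}$, so $\Omega_\varepsilon$ is smooth and convex. Moreover
\[
C_\varepsilon^{-}:=\Bigl(-\tfrac D2 2^{-1/2m},\tfrac D2 2^{-1/2m}\Bigr)\times B^{N-1}_{\varepsilon 2^{-1/2m}}\subset\Omega_\varepsilon\subset\Bigl(-\tfrac D2,\tfrac D2\Bigr)\times B^{N-1}_\varepsilon=:C_\varepsilon^{+},
\]
and since $2^{-1/2m_\varepsilon}\to1$, the Hausdorff convergence and $\operatorname{diam}\Omega_\varepsilon\to D$ follow. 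Domain monotonicity of $\lambda_{k,p}$, which holds for the Lusternik--Schnirelmann values because $W_0^{1,p}$ of the smaller domain embeds into that of the larger, gives
\[
\lambda_{k,p}(C_\varepsilon^{+})\le\lambda_{k,p}(\Omega_\varepsilon)\le\lambda_{k,p}(C_\varepsilon^{-}),\qquad k=1,2.
\]
So it suffices to obtain sharp two-sided asymptotics for a thin cylinder $L\times B_\delta$ with $L=(-\ell/2,\ell/2)$, precise enough that the error from rescaling by $2^{-1/2m}$ is negligible. This is exactly where $m_\varepsilon\varepsilon^2\to\infty$ enters. The transversal eigenvalue is $\Lambda_\delta=\Lambda_1\delta^{-p}$, and the ratio of the two comparison values of $\Lambda_\delta$ is $2^{p/2m}=1+O(1/m)$. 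The error is therefore $O(\varepsilon^{-p}/m)$, which must be $o(1)$. For $p=2$ this is precisely $m\varepsilon^2\to\infty$, and for $p<2$ it is weaker.

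\textbf{Case $p=2$.} Separation of variables is exact, giving
\[
\lambda_1(L\times B_\delta)=\Lambda_1\delta^{-2}+\pi^2/\ell^2,\qquad \lambda_2(L\times B_\delta)=\Lambda_1\delta^{-2}+4\pi^2/\ell^2
\]
for $\delta$ small, since the transversal second eigenvalue is much larger. The upper and lower bounds above then give
\[
\lambda_2(\Omega_\varepsilon)-\lambda_1(\Omega_\varepsilon)=\frac{3\pi^2}{D^2}+O\!\Bigl(\frac{1}{m\varepsilon^2}\Bigr)+o(1)\to\frac{3\pi^2}{D^2}.
\]
The $D$-dependence is smooth, so $D\,2^{-1/2m}\to D$ causes no trouble.

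\textbf{Case $1<p<2$.} Separation fails, so I would use test functions together with a lower bound from a Fubini/ellipticity argument. Consider first the lower bound for $\lambda_1$. For $u\in W_0^{1,p}(C_\varepsilon^{+})$, use $|\nabla u|^p\ge|\nabla_y u|^p$ and apply the transversal Poincaré inequality on each slice to get $\lambda_1(\Omega_\varepsilon)\ge\Lambda_1\varepsilon^{-p}$. For the upper bound on $\lambda_2$, use a genus-2 set in $C_\varepsilon^{-}$ built from $u=f(x)\psi_\delta(y)$, where $\psi_\delta$ is the transversal ground state and $f$ ranges over $\operatorname{span}\{g_1,g_2\}$ of one-dimensional eigenfunctions. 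Write
\[
|\nabla u|^p=\bigl(|f'\psi|^2+|f\nabla\psi|^2\bigr)^{p/2}.
\]
The key estimate for $p<2$ is the elementary inequality $(A+B)^{p/2}\le A^{p/2}+\tfrac p2 A^{p/2-1}B$, applied with $A=|f\nabla\psi|^2$ dominant, which yields
\[
\mathcal E(u)\le\Lambda_1\delta^{-p}+O\bigl(\delta^{2-p}\bigr)\cdot\bigl(\text{bounded quantity in } f\bigr).
\]
A potential singularity appears where $f\nabla\psi$ is small. I would handle it by splitting into the region where $|f'\psi|\ge|f\nabla\psi|$, which contributes $\int|f'\psi|^p\lesssim$ the measure of that region, plus the concavity bound elsewhere. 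The expected outcome is $\lambda_2\le\Lambda_1\delta^{-p}+o(1)$, using $\delta^{2-p}\to0$. Combined with the lower bound and the rescaling error $O(\varepsilon^{-p}/m)=o(1)$, the gap tends to $0$.

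\textbf{Main obstacle.} The hardest step is this quantitative upper bound for $p<2$: the Rayleigh quotient must be shown to exceed $\Lambda_1\delta^{-p}$ by only $o(1)$ uniformly on a genus-2 set. The region near the zeros of $f$ and near $\partial B_\delta$, where the cross term is not small relative to $A$, must be controlled carefully. If the direct splitting is messy, I would first try replacing $\psi_\delta$ by a regularized profile, or using a Jensen/Clarkson-type inequality to bound the mixed term by $\delta^{2-p}\|f'\|^2$.
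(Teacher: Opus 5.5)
Your architecture matches the paper's. You sandwich $\Omega_\varepsilon$ between $s_\varepsilon C_\varepsilon$ and $C_\varepsilon$ with $s_\varepsilon=2^{-1/(2m_\varepsilon)}$, absorb the rescaling error $O(\varepsilon^{-p}/m_\varepsilon)=o(1)$, and use exact separation of variables for $p=2$. For $1<p<2$ you combine the slicewise transversal bound $\lambda_{1,p}\geq\mu_{p,N}\varepsilon^{-p}$ (with $\mu_{p,N}=\lambda_{1,p}(B_1^{N-1},0)$, your $\Lambda_1$) with product test functions on a genus-two set. The geometric claims and the case $p=2$ are fine.

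The gap is the step you flag yourself. For $p<2$ the bound $\lambda_{2,p}\leq\mu_{p,N}\delta^{-p}+o(1)$ is left as an ``expected outcome'', and your displayed key estimate is false as stated. Put $P:=|f|\,|\nabla\psi|$ and $Q:=|f'|\,\psi$, with $\psi$ the ground state of $B_1^{N-1}$. The excess of the Rayleigh quotient over $\mu_{p,N}\delta^{-p}$ is $\delta^{-p}\iint[(P^2+\delta^2Q^2)^{p/2}-P^p]\,dx\,dz$ divided by $\int|f|^p\,dx$. The concavity bound gives $O(\delta^{2-p})$ only if $\iint P^{p-2}Q^2<\infty$. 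But the radial ground state has $|\nabla\psi(z)|\sim c|z|^{1/(p-1)}$ at the centre while $\psi(0)>0$, so $\int|\nabla\psi|^{p-2}\psi^2\,dz=\infty$ for $p\leq(N+1)/N$. In fact, for $p<(N+1)/N$ the ball $|z|\lesssim\delta^{p-1}$ alone contributes an excess of order $\delta^{(p-1)(N-1)}\gg\delta^{2-p}$.

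The dangerous set is this critical point of $\psi$, not $\partial B_\delta$; near the boundary Hopf gives $|\nabla\psi|\geq c$ while $\psi\to0$. Your splitting at $\{P\leq\delta Q\}$ does rescue a fixed $f$, but not via ``$\delta^{2-p}\to0$''. On $\{P>\delta Q\}$ you must use $\delta^{2-p}P^{p-2}Q^2\leq Q^p$ together with dominated convergence. On $\{P\leq\delta Q\}$ you need $Q=0$ a.e.\ on $\{P=0\}$. The paper splits at a fixed level instead: it bounds $(P^2+\delta^2Q^2)^{p/2}-P^p$ by $\delta^pQ^p$ on $\{P<\eta\}$ and by $\frac p2\eta^{p-2}\delta^2Q^2$ on $\{P\geq\eta\}$, then lets $\delta\downarrow0$ before $\eta\downarrow0$.

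Second, with $f$ ranging over $\operatorname{span}\{g_1,g_2\}$ you need this $o(1)$ uniformly on the genus-two set, and you do not provide it. It can be obtained from Dini's theorem on the unit circle of the span, since $\iint\min\{Q^p,\frac p2\delta^{2-p}P^{p-2}Q^2\}$ is continuous in $f$, monotone in $\delta$, and tends to $0$ pointwise. The paper's device makes this unnecessary. Take $h_1,h_2\in C_c^\infty(I_D)$ with disjoint supports, set $v_i(x,y):=h_i(x)\psi(y/\varepsilon)$ and $w_i:=v_i/\|v_i\|_{L^p}$. For $|a|^p+|b|^p=1$ one then has $\|aw_1+bw_2\|_{L^p}=1$ and $\int|\nabla(aw_1+bw_2)|^p=|a|^p\int|\nabla w_1|^p+|b|^p\int|\nabla w_2|^p$. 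So the supremum over this genus-two set is the larger of two fixed Rayleigh quotients, and neither uniformity nor any property of one-dimensional $p$-eigenfunctions is needed. Likewise, doing the computation on the fixed-length cylinder $C_\varepsilon$ and transferring it to $s_\varepsilon C_\varepsilon$ via $\lambda_{k,p}(tC)=t^{-p}\lambda_{k,p}(C)$ removes any dependence on the varying length $Ds_\varepsilon$.
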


 The domains
\(\Omega_\varepsilon\) are trapped between two homothetic thin cylinders.
For \(1<p<2\), the transverse Poincar\'e inequality and a genus-two
localization argument give the same leading term for the first two
eigenvalues, while the longitudinal localization cost is
\(O(\varepsilon^{2-p})=o(1)\); hence the gap tends to zero. For \(p=2\),
separation of variables on the cylinder gives
\[
\lambda_{2,2}(C_\varepsilon,0)
-
\lambda_{1,2}(C_\varepsilon,0)
=
\frac{3\pi^2}{D^2},
\]
and the inclusions
\(s_\varepsilon C_\varepsilon\subset\Omega_\varepsilon\subset C_\varepsilon\)
yield the same limit for \(\Omega_\varepsilon\).

For \(p>2\), the analysis is more delicate, since the first two eigenvalues
have the same leading transverse contribution and the gap arises only at
second order in the longitudinal direction. Establishing this rigorously requires
a second-order compactness argument around the transverse ground state and a corresponding
lower bound for the induced one-dimensional energy. This leads to the natural scale
\[
\lambda_{2,p}(\Omega_\varepsilon,0)
-
\lambda_{1,p}(\Omega_\varepsilon,0)
\gtrsim
\varepsilon^{2-p}.
\]
We do not pursue this separate thin-domain analysis here; the same
\(\varepsilon^{2-p}\)-order lower bound follows instead from
Corollary~\ref{coro:enhanced-fundamental-gap} below.

\smallskip
Next, we focus on the fundamental gap in the regime \(p\geq2\).
A central ingredient is the log-concavity of the positive first
eigenfunction. For \(V\equiv0\), this was proved by Sakaguchi
\cite{sakaguchi-concavity}. Adapting his regularization argument, we extend the result to convex
potentials by combining uniformly elliptic approximations, boundary
concavity, and a two-point maximum principle. Although the main idea
goes back to Sakaguchi, we provide the full details of the argument in
the present setting. The potential requires additional regularity estimates, while its
convexity provides the sign needed in the two-point argument.

\begin{theorem}
\label{thm:first-eigenfunction-log-concavity}
Let \(1<p<\infty\), and let
\(\Omega\subset\mathbb R^N\) be a bounded convex domain. If
\(V\in C(\overline\Omega)\) is convex, then the positive first Dirichlet eigenfunction
associated with \eqref{eq:introduction-eigenvalue-problem} is
log-concave in \(\Omega\).
\end{theorem}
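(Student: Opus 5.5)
We will follow Sakaguchi's strategy and work with $v:=-\log u$, where $u>0$ is the first eigenfunction normalized in $L^p$. A direct computation shows that $v$ solves
\[
\Delta_p v-(p-1)|\nabla v|^{p}=V(x)-\lambda_{1,p}\quad\text{in }\Omega,
\qquad v\to+\infty\ \text{at }\partial\Omega .
\]
(The sign convention is checked from $-\Delta_p u+Vu^{p-1}=\lambda u^{p-1}$ with $u=e^{-v}$.) The claim is equivalent to convexity of $v$.

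Since $\Delta_p$ is degenerate or singular, we first regularize. We replace $|\xi|^{p-2}\xi$ by $A_\varepsilon(\xi)=(\varepsilon^2+|\xi|^2)^{(p-2)/2}\xi$. We approximate $\Omega$ from inside by smooth, strictly convex domains $\Omega_k$, and $V$ by smooth convex $V_k\to V$ uniformly, for instance by mollification plus $\delta|x|^2$. For each regularized problem we take the positive first eigenpair $(\lambda_{k,\varepsilon},u_{k,\varepsilon})$, which exists by minimization of the regularized energy under $\|u\|_p=1$ or by a Krein--Rutman/inverse-iteration argument. Schauder theory gives $u_{k,\varepsilon}\in C^{2,\alpha}$ up to the boundary, and Hopf's lemma gives $\partial_\nu u<0$ on $\partial\Omega_k$.

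To prove that $v_{k,\varepsilon}=-\log u_{k,\varepsilon}$ is convex, we use Korevaar's concavity maximum principle in its two-point form. Set
\[
Z(x,y,t)=v((1-t)x+ty)-(1-t)v(x)-t\,v(y),\qquad z=(1-t)x+ty,
\]
and suppose $\sup Z>0$.

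\emph{Boundary step.} The maximum cannot be attained, or approached, when $x$ or $y$ tends to $\partial\Omega_k$. The reason is that $v\to+\infty$ at the boundary with $|\nabla v|\sim 1/\mathrm{dist}$, and strict convexity of $\Omega_k$ together with $\partial_\nu u<0$ gives the standard boundary concavity of $\log u$ near $\partial\Omega_k$. This is Sakaguchi's and Kennington's argument, which we would reproduce.

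\emph{Interior step.} At an interior maximum, the first-order conditions give $\nabla v(z)=\nabla v(x)=\nabla v(y)=:\xi$, and the second-order conditions give $D^2v(z)\le (1-t)D^2v(x)+tD^2v(y)$ in the sense that matters for the operator. Because the gradients agree, the nonlinear operator $a_{ij}(\xi)\partial_{ij}v-(p-1)(\varepsilon^2+|\xi|^2)^{\cdots}|\xi|^2$ has the same coefficients at all three points. Hence
\[
V_k(z)-\lambda\;\le\;(1-t)\bigl(V_k(x)-\lambda\bigr)+t\bigl(V_k(y)-\lambda\bigr).
\]
Strict convexity of $V_k$ reverses this inequality, giving a contradiction. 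To handle possible degeneracy of the first-order term, we use the strict version: replace $V_k$ by $V_k+\delta|x|^2$, or perturb $Z$ by $-\eta|x-y|^2$. This shows $v_{k,\varepsilon}$ is convex.

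\emph{Passage to the limit.} We let $\varepsilon\to0$, $k\to\infty$. Uniform $C^{1,\alpha}_{\rm loc}$ estimates (DiBenedetto, Tolksdorf, Lieberman), with constants depending on $\|V\|_\infty$, together with stability of the first eigenvalue under domain and potential convergence (Mosco convergence of $W^{1,p}_0(\Omega_k)$ for convex domains), give $\lambda_{k,\varepsilon}\to\lambda_{1,p}(\Omega,V)$. Simplicity of the first eigenvalue then gives $u_{k,\varepsilon}\to u$ locally uniformly. Log-concavity is preserved under locally uniform limits of positive functions.

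The main obstacle is the boundary step. We need uniform control showing that $Z$ cannot attain its maximum near $\partial\Omega_k$ for the regularized operator with a potential. For this we would follow Sakaguchi: first take $\Omega_k$ smooth and strictly convex, use the Hopf lemma plus a barrier comparison to show that $D^2(-\log u)$ is positive definite in a boundary strip, and reduce to the interior case. The potential enters only through lower-order terms bounded in $L^\infty$, which do not affect the blow-up rate of $|\nabla v|$.
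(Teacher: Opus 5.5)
Your overall architecture (regularize, run the two-point maximum principle on the concavity defect of the logarithm, exclude boundary maxima via Korevaar-type boundary concavity, perturb by $\delta|x|^2$, exhaust by strongly convex domains, pass to limits using simplicity) is the paper's. However, the core interior step fails as written, for two reasons. \textbf{First, a sign error.} From $u=e^{-v}$ one gets $-\Delta_p u=u^{p-1}\bigl(\Delta_p v-(p-1)|\nabla v|^p\bigr)$, hence $\Delta_p v-(p-1)|\nabla v|^p=\lambda_{1,p}-V$, not $V-\lambda_{1,p}$. For a check, take $p=2$, $V=0$, $u=\cos x$: then $v''-(v')^2=1$. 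With your sign, the inequality you derive at an interior maximum reads $V_k(z)\le(1-t)V_k(x)+tV_k(y)$. That is just convexity of $V_k$, so strict convexity does not ``reverse'' it and there is no contradiction. With the correct sign you get $V_k(z)\ge(1-t)V_k(x)+tV_k(y)$, which strict convexity does contradict.

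\textbf{Second, and more substantively, the regularization.} The interior step needs the equation for the logarithm of the \emph{regularized} eigenfunction to have coefficients depending only on $\nabla v$ (and on $x$ through $V$). At a positive maximum of $Z$ only the gradients coincide, while $v(z)>(1-t)v(x)+tv(y)$. Your proposal does not produce such an equation together with an existence argument.
\begin{itemize}
\item Suppose you regularize the flux of the $u$-equation by $(\varepsilon^2+|\nabla u|^2)^{(p-2)/2}\nabla u$ and minimize $\int(\varepsilon^2+|\nabla u|^2)^{p/2}+V|u|^p$ on $\|u\|_{L^p}=1$. Then the problem is no longer homogeneous. After $u=e^{-v}$, the principal coefficients become functions of $\varepsilon^2e^{2v}+|\nabla v|^2$, so the three operators at $x,y,z$ differ and the comparison breaks down.
\item Suppose instead you regularize the $v$-equation as you display it. Back in the $u$-variable this is $-\operatorname{div}\bigl((\varepsilon^2u^2+|\nabla u|^2)^{(p-2)/2}\nabla u\bigr)+Vu^{p-1}=\lambda u^{p-1}$. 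This is not an Euler--Lagrange equation, so ``minimization of the regularized energy'' does not produce it, and a nonlinear Krein--Rutman argument would have to be supplied.
\end{itemize}
The missing idea, due to Sakaguchi and used in the paper, is to minimize the degree-$p$ homogeneous energy $\int_\Omega(\varepsilon w^2+|\nabla w|^2)^{p/2}+\int_\Omega V|w|^p$ over $\mathcal K$. Its Euler--Lagrange equation carries the extra term $\varepsilon S^{p-2}u$, with $S=(\varepsilon u^2+|\nabla u|^2)^{1/2}$. Then $v=\log u_\varepsilon$ solves $\operatorname{div}\bigl((\varepsilon+|\nabla v|^2)^{(p-2)/2}\nabla v\bigr)+(\varepsilon+|\nabla v|^2)^{(p-2)/2}\bigl((p-1)|\nabla v|^2-\varepsilon\bigr)=V-\lambda_\varepsilon$, which involves only $\nabla v$, $D^2v$ and $V$.

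This operator degenerates wherever $u=|\nabla u|=0$, so your order ``Schauder up to the boundary, then Hopf'' is not available for it. The paper first obtains uniform $C^{1,\theta}$ bounds and the boundary point lemma by lifting to $U=e^{\sqrt{\varepsilon}\,t}u$, a solution of a weighted $p$-Laplace equation on $\Omega\times\mathbb R$. Only once $S$ is bounded away from zero does it apply Schauder theory.
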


The regularity theory for the regularized eigenfunctions, the boundary
estimates, the compactness argument, and the proof of
Theorem~\ref{thm:first-eigenfunction-log-concavity} are developed in
Section~\ref{sec:logconca}. This theorem is a natural nonlinear analogue of the log-concavity of
Schr\"odinger ground states. 

\smallskip

As discussed above, whether an Andrews--Clutterbuck type sharp two-point
modulus persists for \(p\neq2\) remains unclear, due to the
gradient-dependent degeneracy or singularity of the nonlinear \(p\)-flux
and the absence of a compatible sharp one-dimensional comparison
principle. We therefore pursue a different mechanism, based on a
nonlinear ground-state identity and a new degenerate weighted
Poincar\'e inequality for the ground-state measure. These tools lead to
new quantitative lower bounds for the fundamental gap in the regime
\(p\geq2\).

We first give the nonlinear ground-state identity and
quantitative remainder estimates for the convex function
\(\xi\mapsto|\xi|^p\). For \(\xi,\eta\in\mathbb R^N\), set
\[
\mathcal C_p(\xi,\eta)
:=
|\xi|^p-|\xi-\eta|^p
-p|\xi-\eta|^{p-2}(\xi-\eta)\cdot\eta,\qquad \mathfrak c_p
:=
\inf_{\substack{\xi,\eta\in\mathbb R^N\\ \eta\neq0}}
\frac{\mathcal C_p(\xi,\eta)}{|\eta|^p}.
\]
If \(u_1\) is the positive first eigenfunction associated with
\(\lambda_{1,p}(\Omega,V)\), then the ground-state identity
\cite[Corollary~3.5]{ApseitYessirkegenovZhangirbayev2025} gives
\[\mathcal E_V(w)
-
\lambda_{1,p}(\Omega,V)
\int_\Omega |w|^p\,dx
=
\int_\Omega
\mathcal C_p
\left(
z\nabla u_1+u_1\nabla z,
u_1\nabla z
\right)\,dx,
\qquad
w=u_1z,\]
for every \(w\in W_0^{1,p}(\Omega)\), where \(z=w/u_1\) in \(\Omega\). The uniform convexity of \(|\cdot|^p\) yields
\begin{equation}\label{eq:cp-remainder-estimate}
\mathcal C_p(\xi,\eta)
\geq
\mathfrak c_p|\eta|^p,
\qquad
\mathfrak c_p\geq2^{2-p}.
\end{equation}
Moreover, \(\mathfrak c_2=1\), while for \(p>2\), $\mathfrak c_p=(p-1)\rho_p^{2-p},$
where \(\rho_p>2\) is the unique solution of
\[(\rho-1)^{p-1}
=
(p-1)\rho-1;\]
see \cite[Lemma~2.7 and Remark~2.9]{YessirkegenovZhangirbayev2026}.
We shall also use the sharp quadratic estimate
\begin{equation}\label{eq:cpaboptimal}
\mathcal C_p(a+b,b)
=
|a+b|^p-|a|^p-p|a|^{p-2}a\cdot b
\geq
\frac{p}{2}|a|^{p-2}|b|^2,
\end{equation}
valid for all \(a,b\in\mathbb R^N\), with the constant \(p/2\) being optimal.

\smallskip

Throughout the paper, we use the normalization
\begin{equation}\label{eq:pip}
\pi_p
:=
2\int_0^1\frac{ds}{(1-s^p)^{1/p}}
=
\frac{2\pi}{p\sin(\pi/p)}.
\end{equation}
With this convention, the first Dirichlet eigenvalue of the one-dimensional
\(p\)-Laplacian on
\(\Omega=(-D/2,D/2)\) with \(V\equiv0\) is
\begin{equation}\label{eq:first eigen}
\lambda_{1,p}(\Omega,0)
=
(p-1)\left(\frac{\pi_p}{D}\right)^p;
\end{equation}
see, for instance,
\cite[Section~2]{EdmundsGurkaLang2012} and \cite[Chapter~2]{LangEdmunds2011}.

\smallskip

We next establish the key degenerate weighted Poincar\'e estimate. Equivalently, it provides a  lower bound for the
first nonzero eigenvalue associated with a degenerate weighted Neumann
form determined by the ground state. We first introduce the relevant
notation.

For \(N\geq1\), let \(\mathcal C_N\) denote the class of bounded convex
domains \(\Omega\subset\mathbb R^N\). For \(\Omega\in\mathcal C_N\), let \(u_1\)
be the positive first eigenfunction of \eqref{eq:introduction-eigenvalue-problem} with $V\equiv 0$, normalized by $\int_\Omega u_1^p\,dx=1,$
and set
\[
d\mu:=u_1^p\,dx,
\qquad
r:=|\nabla\log u_1|,
\qquad
D:=\operatorname{diam}(\Omega),
\qquad
\operatorname{Var}_{\mu}(g)
:=
\inf_{c\in\mathbb R}\int_\Omega |g-c|^2\,d\mu.
\]
Since \(\mu(\Omega)=1\), we  have
\begin{equation}\label{eq:variance}
    \operatorname{Var}_{\mu}(g)
=
\int_\Omega\left|g-\int_\Omega g\,d\mu\right|^2\,d\mu.
\end{equation}

We define the first nonzero eigenvalue of the degenerate weighted
Neumann form by
\begin{equation}\label{eq:lamopti}
    \Lambda_p(\Omega)
:=
\inf_{\substack{
g\in C^{0,1}(\overline\Omega)\setminus\{0\}\\
\int_\Omega g\,d\mu=0}}
\frac{\displaystyle
\int_\Omega r^{p-2}|\nabla g|^2\,d\mu}
{\displaystyle
\int_\Omega g^2\,d\mu},\qquad C_p^{\mathrm{opt}}
:=
\sup_{\substack{N\geq1\\ \Omega\in\mathcal C_N}}
\frac{
\lambda_{1,p}(\Omega,0)^{\frac{p-2}{p}}
}{
D^2\Lambda_p(\Omega)
}.
\end{equation}

\begin{theorem}\label{thm:ground-state-weighted-spectral-gap}
For any $p \geq 2$, $C_p^{\mathrm{opt}} < \infty$. Equivalently, for all $N \geq 1$, $\Omega \in \mathcal{C}_N$, and $g \in C^{0,1}(\overline{\Omega})$,
\[
    \operatorname{Var}_{\mu}(g) \leq C_p^{\mathrm{opt}} D^2 \lambda_{1,p}(\Omega,0)^{-\frac{p-2}{p}} \int_\Omega r^{p-2}|\nabla g|^2\,d\mu.
\]
Moreover, the following hold:
\begin{enumerate}
    \item[\rm(i)] If $p=2$, then $\Gamma_2(\Omega,0)=\Lambda_2(\Omega)  \geq \frac{3\pi^2}{D^2}$ and $C_2^{\mathrm{opt}} = \frac{1}{3\pi^2}$.
    \item[\rm(ii)] If $N=1$ and $\Omega=I_D=(-D/2,D/2)$, then $\Lambda_p(I_D) = (p+1)\big(\frac{\pi_p}{D}\big)^p$, thus
    \[\Lambda_p(I_D)< \Gamma_p(I_D,0),\,\,p>2\quad \text{and}\quad C_p^{\mathrm{opt}} \geq \frac{(p-1)^{\frac{p-2}{p}}}{(p+1)\pi_p^2}.\]
\end{enumerate}
\end{theorem}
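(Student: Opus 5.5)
Three things have to be established: finiteness of \(C_p^{\mathrm{opt}}\) for \(p\ge2\), the linear case (i), and the one-dimensional computation (ii). The plan is to reduce the weighted inequality to the Brascamp--Lieb/Payne--Weinberger type Poincar\'e inequality for log-concave measures on convex sets. The input for this is Theorem~\ref{thm:first-eigenfunction-log-concavity} with \(V\equiv0\): \(u_1\) is log-concave, so \(d\mu=u_1^p\,dx\) is a log-concave probability measure supported on a convex set of diameter \(D\).

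\emph{Step 1 (unweighted Poincar\'e).} For log-concave measures on convex domains of diameter \(D\), the sharp Payne--Weinberger/Bebendorf inequality gives
\[
\operatorname{Var}_\mu(h)\le \frac{D^2}{\pi^2}\int_\Omega|\nabla h|^2\,d\mu .
\]
This is dimension free.

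\emph{Step 2 (handling the degenerate weight \(r^{p-2}\)).} The weight vanishes where \(\nabla u_1=0\), so a direct comparison fails. I would first try a two-region argument. Set \(\Omega_t:=\{r\ge t\}\) with \(t\sim \lambda_{1,p}^{1/p}\), the natural scale by dimensional analysis, since \(r\sim 1/D\) and \(\lambda_{1,p}\sim D^{-p}\). On \(\Omega_t\) the bound \(r^{p-2}\ge t^{p-2}\) gives the required factor \(\lambda_{1,p}^{(p-2)/p}\).

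The complement \(\{r<t\}\) has to be shown to carry controlled \(\mu\)-mass. By log-concavity, \(\log u_1\) is concave, so its gradient is monotone. Hence \(\{r<t\}\) is contained in a region of \(\mu\)-width \(O(t^{-1}\cdot\text{small})\) around the maximum point. Using the equation \(-\Delta_p u_1=\lambda u_1^{p-1}\) integrated against suitable test functions, one expects the mass where \(|\nabla u_1|\) is small to be comparable to a fraction bounded away from \(1\).

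A cleaner route, which I would try first, is to change variables along the gradient flow of the concave function \(\psi=\log u_1\). Then apply a one-dimensional weighted Hardy/Muckenhoupt criterion on needles, via Klartag's needle decomposition for log-concave measures. This reduces the problem to one-dimensional log-concave densities \(e^{p\psi}\) on segments of length \(\le D\), with weight \(|\psi'|^{p-2}\). In one dimension, \(\psi'\) is monotone and vanishes at most at one point, and the Muckenhoupt quantity can be bounded explicitly in terms of \(D\) and the scale of \(\psi'\). \textbf{This step is the main obstacle}: controlling the needle-wise scale of \(\psi'\) uniformly by \(\lambda_{1,p}^{1/p}\). I would try to obtain it from the \(p\)-Laplace equation, which gives the lower bound \(|\nabla\psi|\gtrsim\) distance to the maximum set times \(\lambda_{1,p}\)-dependent constants, using \(\operatorname{div}(|\nabla\psi|^{p-2}\nabla\psi)\le -\lambda\) type inequalities.

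\emph{Step 3 (case \(p=2\)).} Write \(u_2=u_1 g\). The ground-state identity with \(\mathfrak c_2=1\) gives \(\mathcal E(u_1g)-\lambda_1\int u_1^2g^2=\int|\nabla g|^2\,d\mu\). Orthogonality \(\int u_1u_2=0\) is exactly \(\int g\,d\mu=0\). Hence \(\Gamma_2=\Lambda_2\), since the min-max for \(\lambda_2\) is a linear Rayleigh quotient and Lipschitz functions are dense. The Andrews--Clutterbuck bound then gives \(\Lambda_2\ge3\pi^2/D^2\), that is, \(C_2^{\mathrm{opt}}\le 1/(3\pi^2)\). Equality in the limit comes from the collapsing domains of Proposition~\ref{prop:smooth-quadratic-threshold-collapse}, or directly from \(I_D\) in \(N=1\), where the gap is \(3\pi^2/D^2\).

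\emph{Step 4 (case \(N=1\)).} On \(I_D\), write \(u_1=\sin_p\)-type and use the first integral \(|u_1'|^p=\frac{\lambda}{p-1}(M^p-u_1^p)\). Take \(g=x\)-type test functions adapted to the structure; I expect the eigenfunction of the Sturm--Liouville problem \(-(|u_1'|^{p-2}u_1^2g')'=\Lambda u_1^p g\) to be \(g=u_1'/u_1^{p-1}\cdot\) const or \(g=\phi_p(u_1')/u_1^{p-1}\). Differentiating the equation \(-(\phi_p(u_1'))'=\lambda u_1^{p-1}\) should verify the eigenvalue \(\Lambda=(p+1)(\pi_p/D)^p\). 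This is odd and changes sign once, hence it is the first nonzero eigenvalue. Comparing with Theorem~\ref{thm:sharp-one-dimensional-fundamental-gap}, \(\Gamma_p(I_D,0)=(p-1)(2^p-1)(\pi_p/D)^p\), and \((p-1)(2^p-1)>p+1\) for \(p>2\). Finally, inserting \(\lambda_{1,p}=(p-1)(\pi_p/D)^p\) into the definition of \(C_p^{\mathrm{opt}}\) gives the stated lower bound.
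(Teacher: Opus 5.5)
\textbf{There is a genuine gap: for $p>2$ the finiteness of $C_p^{\mathrm{opt}}$ is not established.} Your Step~3 is fine, and for $p=2$ it already yields $C_2^{\mathrm{opt}}<\infty$. For $p>2$, however, the whole claim rests on Step~2, which you leave open, and none of your sketched routes closes it as stated.

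\begin{itemize}
\item The $L^2$ inequality of Step~1 needs $\int_\Omega|\nabla g|^2\,d\mu$. The weighted energy $\int_\Omega r^{p-2}|\nabla g|^2\,d\mu$ does not control this on the region where $r$ is small.

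\item Knowing that $\{r<t\}$ carries a small (or non-full) share of $\mu$ does not help. Already for $N=1$ this set is an interval around the maximum that separates $I_D$, and $g$ may switch from $-1$ to $1$ inside it. The weighted energy of such a switch across $(-\delta,\delta)$ is at least $4\bigl(\int_{-\delta}^{\delta}r^{2-p}u_1^{-p}\,dx\bigr)^{-1}$. So what matters is the integrability of the inverse weight $r^{2-p}$, not the mass.

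\item On needles you replace $r=|\nabla\psi|$ by the directional derivative $|\partial_e\psi|$, which can be much smaller. For example, on needles along the long axis of a thin convex domain, typically $r\sim\lambda_{1,p}^{1/p}$ but $|\partial_e\psi|\sim D^{-1}$. The one-dimensional inequalities then cannot produce the factor $\lambda_{1,p}^{(p-2)/p}$.

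\item For $N\geq2$, an inequality such as $\operatorname{div}(|\nabla\psi|^{p-2}\nabla\psi)\leq-\lambda$ only bounds fluxes through level sets. A pointwise lower bound on $|\nabla\psi|$ is a modulus-of-concavity statement of Andrews--Clutterbuck type, which is exactly what is unavailable for $p\neq2$.
\end{itemize}

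\textbf{The missing idea is a scale-correct inverse-weight (ground-state Hardy) estimate, obtained from the equation and convexity together.} Let $\lambda:=\lambda_{1,p}(\Omega,0)$ and $v=-\log u_1$, which is convex. Off the critical set, $\operatorname{div}(r^{p-2}\nabla v)=\lambda+(p-1)r^p$. Also $\operatorname{div}(r^{p-2}\nabla v)=r^{p-2}\bigl(\Delta v+(p-2)D^2v[e,e]\bigr)$ with $0\leq D^2v[e,e]\leq\Delta v$. Together these give the measure inequality $\lambda r^{2-p}+(p-1)r^2\leq(p-1)\Delta v$; this uses that the critical set is null and $|\nabla u_1|^{2-p}\in L^1_{\mathrm{loc}}$.

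Test this inequality with $g^2u_1^p$ and integrate by parts via $\nabla u_1^p=-pu_1^p\nabla v$. Control the $r^2$ and cross terms with the identity $\int r^pg^2\,d\mu=\lambda\int g^2\,d\mu+2\int g\,r^{p-2}\nabla v\cdot\nabla g\,d\mu$, which is the equation tested with $u_1g^2$. This yields
\[
\int_\Omega r^{2-p}g^2\,d\mu\leq C_p\lambda^{-\frac{p-2}{p}}\Bigl(\int_\Omega g^2\,d\mu+\lambda^{-1}\int_\Omega r^{p-2}|\nabla g|^2\,d\mu\Bigr).
\]

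Step~1 must then be replaced by the $L^1$ Poincar\'e inequality for log-concave measures (constant $D/2$). Apply it to $G_\pm^2$, where $G=g-m$ and $m$ is a median of $g$. This gives
\[
\int G^2\,d\mu\leq D\int|G||\nabla G|\,d\mu\leq D\Bigl(\int r^{p-2}|\nabla G|^2\,d\mu\Bigr)^{1/2}\Bigl(\int r^{2-p}G^2\,d\mu\Bigr)^{1/2}.
\]
This places the inverse weight on $G$, where the Hardy estimate controls it. The argument then closes using $D^2\lambda^{2/p}\geq(p-1)^{2/p}\pi_p^2$.

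\textbf{Step~4 also has an error: your candidate functions are not eigenfunctions.} Both reduce to $u_1'/u_1$ when $p=2$. There $u_1^2(u_1'/u_1)'\equiv-\lambda_{1,2}(I_D,0)\|u_1\|_\infty^2\neq0$, which violates the Neumann condition. The right eigenfunction is $g=u_1'$. For it, $u_1^2|u_1'|^{p-2}g'=-\frac{\lambda}{p-1}u_1^{p+1}$, hence $-\bigl(u_1^2|u_1'|^{p-2}g'\bigr)'=\frac{p+1}{p-1}\lambda\,u_1^pg$. For $p>2$, $u_1'$ is not Lipschitz at $0$ and the coefficient degenerates. You therefore still need singular Sturm--Liouville oscillation theory plus a Lipschitz approximation to identify this value with $\Lambda_p(I_D)$.
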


By Theorem \ref{thm:ground-state-weighted-spectral-gap}(i), for $p=2$, the sharp lower bound on the fundamental gap is equivalent to the optimal ground-state weighted Poincar\'e inequality
\[
\operatorname{Var}_{\mu}(g)
\leq
\frac{D^2}{3\pi^2}
\int_\Omega |\nabla g|^2\,d\mu,
\qquad
d\mu=u_1^2\,dx.
\]
This improves the general weighted Poincar\'e estimate of
\cite[Theorem~1.1]{ferone-nitsch-trombetti} by exploiting the additional
spectral structure of the ground-state weight. The new difficulty for \(p>2\) is the degenerate
weight $|\nabla\log u_1|^{p-2},$
which may vanish on the critical set of \(u_1\), so the usual 
Poincar\'e inequality is no longer directly applicable. The key ingredient is the ground-state Hardy estimate in
Lemma~\ref{lem:ground-state-hardy}, which allows us to recover the sharp
scaling.
To the best of our knowledge, this dimension-free degenerate weighted Poincar\'e
estimate for \(p>2\) is new.

\smallskip

The key observation is that \eqref{eq:cp-remainder-estimate} and
\eqref{eq:cpaboptimal} provide quantitative refinements of the classical
Picone inequality \cite{AllegrettoHuang1998}. Combining these
remainder estimates with the Poincar\'e inequality for
log-concave measures from
Lemma~\ref{lem:weighted-poincare-log-concave-gap} and the degenerate weighted Poincar\'e estimate in Theorem \ref{thm:ground-state-weighted-spectral-gap}, we obtain two quantitative stability results for the $L^p$-Poincaré inequality.

\begin{theorem}
\label{thm:potential-poincare-stability}
Let \(p\geq2\), let \(u_1\) be the positive first eigenfunction associated
with \(\lambda_{1,p}(\Omega,V)\), normalized by $\|u_1\|_{L^p(\Omega)}=1,$
and set $d\mu:=u_1^p\,dx,\,D:=\operatorname{diam}(\Omega).$
Then, for every \(w\in W_0^{1,p}(\Omega)\),

\medskip
\noindent\textnormal{\rm (i)}
Let $\mathfrak c_p$ be defined as in \eqref{eq:cp-remainder-estimate}, then
\begin{equation}
\label{eq:potential-poincare-stability}
\mathcal E_V(w)
-
\lambda_{1,p}(\Omega,V)
\|w\|_{L^p(\Omega)}^p
\geq
\mathfrak c_p(p-1)
\left(\frac{\pi_p}{D}\right)^p
\inf_{c\in\mathbb R}
\|w-cu_1\|_{L^p(\Omega)}^p.
\end{equation}

\medskip
\noindent\textnormal{\rm (ii)}
If \(V\equiv0\), then 
\[\int_\Omega |\nabla w|^p\,dx
-
\lambda_{1,p}(\Omega,0)
\|w\|_{L^p(\Omega)}^p
\geq
\frac{2}{p}\frac{1}{C_p^{\mathrm{opt}}}
\lambda_{1,p}(\Omega,0)^{\frac{p-2}{p}}
D^{-2}
\operatorname{Var}_{\mu}(f_w),\]
where $C_p^{\mathrm{opt}}$ is defined in \eqref{eq:lamopti} and 
\[f_w
:=
\operatorname{sgn}\left(\frac{w}{u_1}\right)
\left|\frac{w}{u_1}\right|^{p/2}.\]
\end{theorem}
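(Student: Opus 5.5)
Both parts start from the ground-state identity of \cite[Corollary~3.5]{ApseitYessirkegenovZhangirbayev2025}. For \(w\in W_0^{1,p}(\Omega)\) we write \(w=u_1z\) with \(z=w/u_1\), which gives
\[
\mathcal E_V(w)-\lambda_{1,p}(\Omega,V)\|w\|_{L^p}^p=\int_\Omega\mathcal C_p\bigl(z\nabla u_1+u_1\nabla z,u_1\nabla z\bigr)\,dx .
\]
By Theorem~\ref{thm:first-eigenfunction-log-concavity}, \(u_1\) is log-concave. A density argument lets us assume \(w\) is smooth with compact support. Then \(z\) is locally Lipschitz and all integrals are finite; the general case follows by approximation and Fatou's lemma.

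\emph{Part (i).} By \eqref{eq:cp-remainder-estimate} the right-hand side is at least \(\mathfrak c_p\int_\Omega u_1^p|\nabla z|^p\,dx=\mathfrak c_p\int_\Omega|\nabla z|^p\,d\mu\). The measure \(d\mu=u_1^p\,dx\) is log-concave on the convex set \(\Omega\). We then apply the \(L^p\)-Poincar\'e inequality for log-concave measures, Lemma~\ref{lem:weighted-poincare-log-concave-gap}, with the sharp one-dimensional Neumann constant. That constant equals the first nonzero Neumann eigenvalue of the \(p\)-Laplacian on an interval of length \(D\), namely \((p-1)(\pi_p/D)^p\). This yields
\[
\int_\Omega|\nabla z|^p\,d\mu\ \ge\ (p-1)\Bigl(\frac{\pi_p}{D}\Bigr)^p\inf_{c}\int_\Omega|z-c|^p\,d\mu .
\]
Since \(\int|z-c|^pu_1^p\,dx=\|w-cu_1\|_{L^p}^p\), this gives \eqref{eq:potential-poincare-stability}.

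\emph{Part (ii).} Here \(V\equiv0\) and we use the quadratic bound \eqref{eq:cpaboptimal} with \(a=z\nabla u_1\) and \(b=u_1\nabla z\). It gives
\[
\mathcal C_p\ \ge\ \frac p2|z|^{p-2}|\nabla u_1|^{p-2}u_1^2|\nabla z|^2=\frac p2\,u_1^p\,r^{p-2}|z|^{p-2}|\nabla z|^2 .
\]
Next, \(f_w=\operatorname{sgn}(z)|z|^{p/2}\) satisfies \(|\nabla f_w|^2=\frac{p^2}{4}|z|^{p-2}|\nabla z|^2\). Hence the deficit is at least
\[
\frac p2\cdot\frac4{p^2}\int_\Omega r^{p-2}|\nabla f_w|^2\,d\mu=\frac2p\int_\Omega r^{p-2}|\nabla f_w|^2\,d\mu .
\]
Applying Theorem~\ref{thm:ground-state-weighted-spectral-gap} to \(g=f_w\) then gives the claimed bound with the constant \(\frac2p(C_p^{\mathrm{opt}})^{-1}\lambda_{1,p}^{(p-2)/p}D^{-2}\).

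\emph{Main obstacle: regularity of \(f_w\).} Theorem~\ref{thm:ground-state-weighted-spectral-gap} is stated for \(g\in C^{0,1}(\overline\Omega)\), but \(f_w=w/u_1\) to a power may be unbounded near \(\partial\Omega\) when \(w\) does not vanish to the same order as \(u_1\). For smooth compactly supported \(w\), however, \(z\) is Lipschitz with compact support, and since \(p/2\ge1\), \(f_w\) is Lipschitz as well. So the inequality holds for a dense class. For general \(w\in W_0^{1,p}\) we approximate by \(w_k\in C_c^\infty\) converging to \(w\) in \(W^{1,p}\). The left-hand side converges, and \(f_{w_k}\to f_w\) in \(L^2(\mu)\), because \(|f_{w_k}|^2u_1^p=|w_k|^p\) and \(w_k\to w\) in \(L^p\). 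The variance therefore passes to the limit. We must also check that the elementary inequalities are used pointwise only where \(u_1>0\), which holds throughout \(\Omega\), so no boundary terms arise.
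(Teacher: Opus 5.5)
Your proposal is correct and follows the paper's proof essentially step by step. The paper also starts from the Picone/ground-state identity for $w\in C_c^\infty(\Omega)$. For (i) it uses $\mathcal C_p\ge\mathfrak c_p|\eta|^p$ together with Lemma~\ref{lem:weighted-poincare-log-concave-gap} applied to $f=w/u_1$ and $\omega=u_1^p$. For (ii) it uses \eqref{eq:cpaboptimal} with $a=z\nabla u_1$, $b=u_1\nabla z$, then Theorem~\ref{thm:ground-state-weighted-spectral-gap} for the Lipschitz function $f_w$, and concludes both parts by density.

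The only point to tighten is the limit in (ii). The identity $|f_{w_k}|^2u_1^p=|w_k|^p$ by itself only gives convergence of $L^2(\mu)$-norms. Instead, note that $\|f_{w_k}-f_w\|_{L^2(\mu)}=\|T(w_k)-T(w)\|_{L^2(\Omega)}$ with $T(t)=\operatorname{sgn}(t)|t|^{p/2}$, so convergence follows from continuity of the Nemytskii map $T:L^p(\Omega)\to L^2(\Omega)$, exactly as in the paper. Similarly, in (i) the relevant fact is that $w\mapsto\inf_c\|w-cu_1\|_{L^p}$ is $1$-Lipschitz, so Fatou's lemma is not needed.
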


The two estimates above provide different forms of stability around the
ground-state eigenspace. 
As will be seen from Corollary~\ref{coro:enhanced-fundamental-gap}, the
second estimate yields, up to constants depending only on \(p\), a
stronger spectral-gap bound than the first one. Moreover, the
second estimate plays a crucial role in the proof of Theorem~\ref{thm:existence-gap-minimizer}, where
the boundedness of the gap along a minimizing sequence, together with
the second estimate, yields a uniform upper bound for the first
eigenvalue.

We also point out that the case \(V\equiv0\) of
\eqref{eq:potential-poincare-stability}, with the constant
\(2^{2-p}\) in place of the sharp constant \(\mathfrak c_p\), was obtained
in \cite[Theorem~3.3]{YessirkegenovZhangirbayev2026}. Thus
\eqref{eq:potential-poincare-stability} both sharpens that estimate and
extends it to convex potentials. To the best of our knowledge, the
stability estimate in part~\textnormal{\rm (ii)}, as well as its resulting
enhanced spectral gap bound, is new.

\smallskip

To derive spectral gap estimates from
Theorem~\ref{thm:potential-poincare-stability}, we use the
mountain-pass characterization of the second eigenvalue:
\begin{equation}\label{eq:lambda2-mountain-pass-potential}
\lambda_{2,p}(\Omega,V)
=
\inf_{\gamma\in\Gamma(u_1,-u_1)}
\max_{s\in[0,1]}
\mathcal E_V(\gamma(s)),
\end{equation}
where
\[
\Gamma(u_1,-u_1)
:=
\left\{
\gamma\in C([0,1],\mathcal K):
\gamma(0)=u_1,\ 
\gamma(1)=-u_1
\right\}.
\]
For the \(p\)-Laplacian with a potential, this characterization follows
from \cite[Proposition~15]{CuestaRamosQuoirin2009}. In the case
\(V\equiv0\), see also \cite{CuestaDeFigueiredoGossez1999} and
\cite[Theorem~2.6]{BrascoFranzina2013}.

\smallskip

Using the nonlinear analogue of the \(L^2\)-orthogonal complement of the
ground state established in
Lemma~\ref{lem:nonlinear-orthogonal-section}, we obtain the following fundamental-gap estimates.

\begin{theorem}
\label{thm:dimension-free-higher-dimensional-gap}
Let \(p\geq2\), and let
\(\Omega\subset\mathbb R^N\) be a bounded convex domain, $D:=\operatorname{diam}(\Omega).$

\medskip
\noindent\textnormal{\rm (i)}
If \(V\in C(\overline\Omega)\) is convex, then
\[\lambda_{2,p}(\Omega,V)-\lambda_{1,p}(\Omega,V)
\geq
\mathfrak c_p(p-1)
\left(\frac{\pi_p}{D}\right)^p,\]
where $\mathfrak c_p$ is defined as in \eqref{eq:cp-remainder-estimate}. In particular, for $p=2,$ we have
\[\lambda_{2,2}(\Omega,V)-\lambda_{1,2}(\Omega,V)
\geq
\frac{\pi^2}{D^2}.\]
\medskip
\noindent\textnormal{\rm (ii)}
Let \(C_p^{\mathrm{opt}}\) be defined by \eqref{eq:lamopti}. If \(V\equiv0\), then
\begin{equation}
\label{eq:enhanced-fundamental-gap}
\lambda_{2,p}(\Omega,0)-\lambda_{1,p}(\Omega,0)
\geq
\frac{2}{p\,C_p^{\mathrm{opt}}}
\lambda_{1,p}(\Omega,0)^{\frac{p-2}{p}}
D^{-2}.
\end{equation}
In particular, when \(p=2\),
\eqref{eq:enhanced-fundamental-gap} becomes the sharp fundamental-gap estimate
\[
\lambda_{2,2}(\Omega,0)-\lambda_{1,2}(\Omega,0)
\geq
\frac{3\pi^2}{D^2}.
\]
\end{theorem}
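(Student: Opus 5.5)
We will derive both estimates from the stability inequalities of Theorem~\ref{thm:potential-poincare-stability}, combined with the mountain-pass characterization \eqref{eq:lambda2-mountain-pass-potential} and the nonlinear orthogonal section of Lemma~\ref{lem:nonlinear-orthogonal-section}. The common mechanism is this. Every admissible path $\gamma\in\Gamma(u_1,-u_1)$ joins $u_1$ to $-u_1$ inside $\mathcal K$, so by continuity it must cross a ``nonlinear orthogonal complement'' of the ground state. On this set, the distance to the eigenspace $\{cu_1\}$ (measured either by $\inf_c\|w-cu_1\|_{L^p}^p$ or by $\operatorname{Var}_\mu(f_w)$) is bounded below by a universal constant. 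Hence $\max_s\mathcal E_V(\gamma(s))-\lambda_{1,p}\ge (\text{stability constant})\times(\text{that lower bound})$, and taking the infimum over $\gamma$ gives the gap estimate.

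For (i), we will use the section $S_1:=\{w\in\mathcal K:\ \text{the minimizer } c \text{ in } \inf_c\|w-cu_1\|_p \text{ is } 0\}$. By strict convexity of $c\mapsto\|w-cu_1\|_p^p$ for $p\ge2$, the minimizer $c(w)$ is unique and depends continuously on $w$. Moreover $c(u_1)=1$ and $c(-u_1)=-1$, so the intermediate value theorem yields $s_0$ with $c(\gamma(s_0))=0$. At that point,
\[
\inf_c\|\gamma(s_0)-cu_1\|_p^p=\|\gamma(s_0)\|_p^p=1.
\]
Plugging this into \eqref{eq:potential-poincare-stability} gives $\max_s\mathcal E_V(\gamma(s))\ge\lambda_{1,p}+\mathfrak c_p(p-1)(\pi_p/D)^p$, which is the claim after taking $\inf_\gamma$. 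For $p=2$ we have $\mathfrak c_2=1$ and $(p-1)\pi_2^2=\pi^2$, which gives $\pi^2/D^2$. This is presumably exactly what Lemma~\ref{lem:nonlinear-orthogonal-section} records, and we will cite it.

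For (ii), we will work instead with $S_2:=\{w\in\mathcal K:\int_\Omega f_w\,d\mu=0\}$, noting that $\int f_w\,d\mu=\int\operatorname{sgn}(w)|w|^{p/2}u_1^{p/2}\,dx$. The map $w\mapsto\int f_w\,d\mu$ is continuous on $L^p$ by H\"older, takes the value $1$ at $u_1$ and $-1$ at $-u_1$, so every path meets $S_2$. On $S_2$ we have
\[
\operatorname{Var}_\mu(f_w)=\int f_w^2\,d\mu=\int|w|^pu_1^{-p}u_1^p\,dx=\|w\|_p^p=1.
\]
Theorem~\ref{thm:potential-poincare-stability}(ii) then gives $\max_s\int|\nabla\gamma(s)|^p-\lambda_{1,p}\ge\frac{2}{pC_p^{\mathrm{opt}}}\lambda_{1,p}^{(p-2)/p}D^{-2}$, which is \eqref{eq:enhanced-fundamental-gap}. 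For $p=2$, Theorem~\ref{thm:ground-state-weighted-spectral-gap}(i) gives $C_2^{\mathrm{opt}}=1/(3\pi^2)$, the prefactor $2/p$ equals $1$, and the bound becomes $3\pi^2/D^2$.

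The main obstacle is justifying that the relevant functionals are well defined and continuous along paths. For (i) this means continuity of $c(w)$; for (ii) it means that $f_w$ and the identity $\operatorname{Var}_\mu(f_w)=\|w\|_p^p$ make sense for general $w\in W^{1,p}_0$, even though the stability theorem is stated via the quotient $w/u_1$, which may fail to be Lipschitz. We will handle this through the density and approximation arguments already built into Theorem~\ref{thm:potential-poincare-stability}, applying it directly to $\gamma(s_0)\in W_0^{1,p}$.
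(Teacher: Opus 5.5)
Your proposal is correct and follows essentially the same route as the paper: in both parts you locate, on each path in $\Gamma(u_1,-u_1)$, a point of a nonlinear orthogonal section, and then combine Theorem~\ref{thm:potential-poincare-stability} with the mountain-pass characterization \eqref{eq:lambda2-mountain-pass-potential}. Your set $S_1$ coincides with the paper's $\mathcal H_{u_1}$, since by strict convexity $c(w)=0$ if and only if $\int_\Omega|w|^{p-2}wu_1\,dx=0$, and your $S_2$ is exactly the zero set of the paper's functional $H$. The only cosmetic difference is in (i): you apply the intermediate value theorem to the argmin $c(\gamma(s))$, whereas the paper applies it to $s\mapsto\int_\Omega|\gamma(s)|^{p-2}\gamma(s)u_1\,dx$, whose continuity is more immediate.
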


We emphasize that part~\textnormal{\rm (ii)} cannot be extended uniformly
to general nontrivial potentials \(V\) with a constant depending only on
\(p\) for all $p\ge 2$, since the spectral gap is invariant under additive shifts of \(V\),
whereas \(\lambda_{1,p}(\Omega,V)\) is not. We also note that, once the quadratic diameter scale \(D^{-2}\) is fixed,
the exponent \((p-2)/p\) in \eqref{eq:enhanced-fundamental-gap} is uniquely
determined by scaling. The power \(D^{-2}\) reflects the quadratic
coercivity in \eqref{eq:cpaboptimal}. By Lemma~\ref{lem:first-eigenvalue-diameter-lower-bound}, we obtain the
following consequences.

\begin{corollary}
\label{coro:enhanced-fundamental-gap}
Under the assumptions of
Theorem~\ref{thm:dimension-free-higher-dimensional-gap}, the following
consequences hold.

\noindent\textnormal{\rm (i)}
Assume that \(p\geq2\) and \(V\equiv0\). By
Theorem~\ref{thm:dimension-free-higher-dimensional-gap} \textnormal{\rm (ii)}
and Lemma~\ref{lem:first-eigenvalue-diameter-lower-bound},
\[
\lambda_{2,p}(\Omega,0)-\lambda_{1,p}(\Omega,0)
\geq
\frac{2}{p\,C_p^{\mathrm{opt}}}
(p-1)^{\frac{p-2}{p}}
\pi_p^{p-2}D^{-p}.
\]
For \(p=2\), since \(C_2^{\mathrm{opt}}=(3\pi^2)^{-1}\), this gives the sharp bound $\frac{3\pi^2}{D^2}.$
For \(p>2\), by Theorem \ref{thm:ground-state-weighted-spectral-gap}, we have
\[
\frac{2}{p\,C_p^{\mathrm{opt}}}
(p-1)^{\frac{p-2}{p}}\pi_p^{p-2}
<
(p-1)(2^p-1)\pi_p^p.
\]
Hence, for \(p>2\), the resulting constant is strictly smaller than the
sharp one-dimensional fundamental-gap constant.

\medskip
\noindent\textnormal{\rm (ii)}
Let \(\Omega_\varepsilon\) be the collapsing domains of
Proposition~\ref{prop:smooth-quadratic-threshold-collapse}. If \(p>2\),
then
\[
\lambda_{2,p}(\Omega_\varepsilon,0)
-
\lambda_{1,p}(\Omega_\varepsilon,0)
\gtrsim
\varepsilon^{2-p}
\longrightarrow+\infty
\qquad\text{as }\varepsilon\downarrow0.
\]
\end{corollary}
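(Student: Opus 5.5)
The plan is to treat the two parts separately. Both reduce to inequalities already available: Theorem~\ref{thm:dimension-free-higher-dimensional-gap}(ii), Lemma~\ref{lem:first-eigenfunction-diameter-lower-bound}, and the lower bound on \(C_p^{\mathrm{opt}}\) from Theorem~\ref{thm:ground-state-weighted-spectral-gap}(ii).

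\textbf{Part (i).} Lemma~\ref{lem:first-eigenvalue-diameter-lower-bound} should give
\[
\lambda_{1,p}(\Omega,0)\geq(p-1)\left(\frac{\pi_p}{D}\right)^p,
\]
i.e.\ the one-dimensional value is a lower bound for all convex domains of diameter \(D\). Since \((p-2)/p\ge0\), raising to this power preserves the inequality. Inserting it into \eqref{eq:enhanced-fundamental-gap} gives
\[
\frac{2}{pC_p^{\mathrm{opt}}}(p-1)^{\frac{p-2}{p}}\pi_p^{p-2}D^{-(p-2)}D^{-2},
\]
which is the stated bound. For \(p=2\) the power is zero, and \(C_2^{\mathrm{opt}}=(3\pi^2)^{-1}\) from Theorem~\ref{thm:ground-state-weighted-spectral-gap}(i) gives \(3\pi^2/D^2\).

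For the strict comparison when \(p>2\), use \(C_p^{\mathrm{opt}}\geq (p-1)^{\frac{p-2}{p}}/((p+1)\pi_p^2)\) from Theorem~\ref{thm:ground-state-weighted-spectral-gap}(ii). This yields
\[
\frac{2}{pC_p^{\mathrm{opt}}}(p-1)^{\frac{p-2}{p}}\pi_p^{p-2}\le \frac{2(p+1)}{p}\pi_p^p .
\]
It then suffices to check \(2(p+1)/p<(p-1)(2^p-1)\) for \(p>2\). At \(p=2\) the left side is \(3\) and the right side is also \(3\). For \(p>2\) the left side decreases while the right side strictly increases, so the inequality is strict. I expect this elementary monotonicity check to be the only real point to verify here.

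\textbf{Part (ii).} Apply Theorem~\ref{thm:dimension-free-higher-dimensional-gap}(ii) directly to \(\Omega_\varepsilon\). Since \(\operatorname{diam}\Omega_\varepsilon\to D\), the factor \(D_\varepsilon^{-2}\) is bounded below. We therefore need
\[
\lambda_{1,p}(\Omega_\varepsilon,0)\gtrsim \varepsilon^{-p}.
\]
The main obstacle is obtaining this eigenvalue lower bound, and I would use domain monotonicity. The inclusion \(\Omega_\varepsilon\subset C_\varepsilon:=(-D/2,D/2)\times B_\varepsilon^{N-1}\) holds, since each term in the defining sum is \(<1\). Dirichlet eigenvalues decrease under enlargement, so
\[
\lambda_{1,p}(\Omega_\varepsilon)\ge\lambda_{1,p}(C_\varepsilon).
\]
Slicing in \(y\), the transverse Poincar\'e inequality on \(B^{N-1}_\varepsilon\) (scaled from \(B_1\)) applied to each fiber gives
\[
\int|\nabla u|^p\ge\int|\nabla_y u|^p\ge \lambda_{1,p}(B^{N-1}_1)\,\varepsilon^{-p}\int|u|^p .
\]
Hence \(\lambda_{1,p}(\Omega_\varepsilon)^{(p-2)/p}\gtrsim\varepsilon^{-(p-2)}\). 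Combined with \(C_p^{\mathrm{opt}}<\infty\), this gives a gap \(\gtrsim\varepsilon^{2-p}\), which tends to \(+\infty\) since \(p>2\).
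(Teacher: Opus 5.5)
Your proposal is correct and follows essentially the route implicit in the paper. Part (i) inserts the diameter bound $\lambda_{1,p}(\Omega,0)\ge(p-1)(\pi_p/D)^p$ into \eqref{eq:enhanced-fundamental-gap} and uses $C_p^{\mathrm{opt}}\ge(p-1)^{\frac{p-2}{p}}/((p+1)\pi_p^2)$ to reduce the strict comparison to $2(p+1)/p<(p-1)(2^p-1)$ for $p>2$, while part (ii) combines Theorem~\ref{thm:dimension-free-higher-dimensional-gap}(ii) with the transverse bound $\lambda_{1,p}(\Omega_\varepsilon,0)\ge\lambda_{1,p}(C_\varepsilon,0)\ge\mu_{p,N}\varepsilon^{-p}$, which is already established in the proof of Proposition~\ref{prop:smooth-quadratic-threshold-collapse}.
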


We next consider the optimal diameter-normalized fundamental gap in the
class of bounded convex domains. Let \(N\geq1\) and \(1<p<\infty\), and
define
\[
\mathcal G_{p,N}
:=
\inf_{\Omega\in\mathcal C_N}
\operatorname{diam}(\Omega)^p\Gamma_p(\Omega,0),
\]
where \(\mathcal C_N\) denotes the class of bounded convex
domain \(\Omega\subset\mathbb R^N\).

\begin{theorem}
\label{thm:existence-gap-minimizer}
We have
\[
\mathcal G_{p,N} =
\begin{cases}
(p-1)(2^p-1)\pi_p^p, & N=1,\,\,1<p<\infty, \\
0,                   & N\geq2, \ 1<p<2, \\
3\pi^2,              & N\geq2, \ p=2.
\end{cases}
\]
For $N\geq 2$ and $p>2$, $0 < \mathcal G_{p,N} < \infty$, and the infimum is attained by some $\Omega_p^*\in\mathcal C_N$ with $\operatorname{diam}(\Omega_p^*)=1$ such that $\mathcal G_{p,N} = \Gamma_p(\Omega_p^*,0)$. Furthermore, $\limsup_{p\downarrow 2}\mathcal G_{p,N} \leq 3\pi^2$, and any such family of normalized minimizers $\Omega_p^*$ satisfies $r_{\Omega_p^*}\to 0$ as $p\downarrow 2$, where \(r_{\Omega_p^*}\) is the inradius of \(\Omega_p^*\), defined by
\[
r_{\Omega_p^*}
:=
\sup\left\{
r>0:
B_r(x)\subset\Omega_p^*
\text{ for some }x\in\Omega_p^*
\right\}.
\]
\end{theorem}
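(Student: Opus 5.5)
The plan is to treat the four regimes separately; existence for $p>2$ is the substantial part.

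\emph{The cases $N=1$, $N\ge2$ with $p<2$, and $N\ge2$ with $p=2$.} For $N=1$ every bounded convex domain is an interval of length $D$. Theorem~\ref{thm:sharp-one-dimensional-fundamental-gap} with $V\equiv0$ gives equality $\Gamma_p(I_D,0)=(p-1)(2^p-1)(\pi_p/D)^p$, so $\mathcal G_{p,1}$ equals the stated constant and is attained. For $N\ge2$, $1<p<2$, Proposition~\ref{prop:smooth-quadratic-threshold-collapse} shows that $\operatorname{diam}(\Omega_\varepsilon)^p\Gamma_p(\Omega_\varepsilon,0)\to D^p\cdot 0=0$, while $\mathcal G_{p,N}\ge0$ holds trivially. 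For $p=2$ the lower bound $3\pi^2$ is Theorem~\ref{thm:dimension-free-higher-dimensional-gap}(ii), and the collapsing family of Proposition~\ref{prop:smooth-quadratic-threshold-collapse} shows that it is approached.

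\emph{Existence for $p>2$.} Positivity follows from Theorem~\ref{thm:dimension-free-higher-dimensional-gap}(i), and finiteness from evaluating the gap on any single domain. Take a minimizing sequence $\Omega_k$ normalized by translation and scaling ($\Gamma_p$ scales like $D^{-p}$) so that $\operatorname{diam}\Omega_k=1$ and $\Omega_k\subset \overline{B_1}$. Blaschke selection gives a subsequence converging in Hausdorff distance to a compact convex set $K$ of diameter $1$.

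The key step, which I expect to be the main obstacle, is ruling out collapse: $K$ must have nonempty interior. Since $\Gamma_p(\Omega_k,0)$ is bounded, Theorem~\ref{thm:dimension-free-higher-dimensional-gap}(ii) with $D=1$ gives a uniform bound $\lambda_{1,p}(\Omega_k,0)^{(p-2)/p}\le \tfrac p2 C_p^{\mathrm{opt}}\Gamma_p(\Omega_k,0)$; here the condition $p>2$ is used. For convex domains, $\lambda_{1,p}\ge c_{N,p}\,r_{\Omega}^{-p}$ by a Hersch/Protter-type inradius bound; I would prove this by comparing with a slab of width $2r_\Omega$ and using that a convex set lies in such a slab up to a dimensional factor. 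Hence the inradii stay bounded below, $K$ has interior, and $\Omega:=\operatorname{int}K$ is a bounded convex domain of diameter $1$.

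Then I use continuity of variational eigenvalues under Hausdorff convergence of convex domains with nondegenerate limit: $\lambda_{1,p}(\Omega_k)\to\lambda_{1,p}(\Omega)$ and $\lambda_{2,p}(\Omega_k)\to\lambda_{2,p}(\Omega)$. To prove this, pick a sequence of homotheties $t_k\to1$ with $t_k\Omega\subset\Omega_k\subset t_k^{-1}\Omega$ after translating centers; this is possible for convex bodies converging in Hausdorff distance. Domain monotonicity of $\lambda_{j,p}$ (inclusion of the admissible sets in \eqref{eq:eigenvalue}) and the scaling $\lambda_{j,p}(t\Omega)=t^{-p}\lambda_{j,p}(\Omega)$ then give the convergence. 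So $\Gamma_p(\Omega,0)=\mathcal G_{p,N}$, and $\Omega_p^*:=\Omega$ is a minimizer.

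\emph{Behaviour as $p\downarrow2$.} For the limsup, fix $\varepsilon$ and take $\Omega_\varepsilon$ from Proposition~\ref{prop:smooth-quadratic-threshold-collapse}. For a fixed bounded convex domain, $p\mapsto\lambda_{j,p}$ is continuous (the known continuity of variational eigenvalues in $p$ for $j=1,2$). Hence $\limsup_{p\downarrow2}\mathcal G_{p,N}\le \operatorname{diam}(\Omega_\varepsilon)^2\Gamma_2(\Omega_\varepsilon,0)$, and letting $\varepsilon\to0$ gives $3\pi^2$.

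For degeneration, suppose $r_{\Omega_{p_k}^*}\ge r_0>0$ along $p_k\downarrow2$. Extract a Hausdorff limit $\Omega_0$ with diameter $1$ and nonempty interior. Joint continuity in $(p,\Omega)$ follows by combining the homothety sandwich above with continuity in $p$, using uniform-in-$p$ stability near $p=2$, which I would verify through the variational characterization with test functions in $W^{1,\infty}_0$. This gives $\Gamma_2(\Omega_0,0)=\lim\mathcal G_{p_k,N}\le3\pi^2$. On the other hand, the strict inequality $\Gamma_2(\Omega_0,0)>3\pi^2$ holds for genuine $N$-dimensional convex domains; this is the non-attainment at $p=2$ stated in the abstract, which I would derive from the equality analysis of Andrews--Clutterbuck. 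The contradiction shows $r_{\Omega_p^*}\to0$.
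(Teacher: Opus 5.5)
Your proposal follows the paper's proof step by step: the enhanced gap estimate with $D=1$ bounds $\lambda_{1,p}$, an inradius bound prevents collapse, then Blaschke selection, the homothety sandwich $(1-\eta_j)\Omega_*\subset\Omega_j\subset(1+\eta_j)\Omega_*$, and continuity of $\lambda_{k,p}$ in $p$ on the fixed limit domain. There are two small differences. First, you prove the inradius bound via the minimal width, using Steinhagen's inequality $w_\Omega\le C_N r_\Omega$ together with the one-dimensional Poincar\'e inequality on lines across the slab. This replaces the paper's citation of Brasco's $p$-Hersch--Protter inequality and is fine because $N$ is fixed. Second, your ``uniform-in-$p$ stability'' is unnecessary: the sandwich reduces everything to the fixed domain $\Omega_0$, where pointwise continuity in $p$ suffices.

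The step that does not go through as written is the strict inequality $\Gamma_2(\Omega_0,0)>3\pi^2$ for a convex $\Omega_0$ with $\operatorname{diam}(\Omega_0)=1$ and nonempty interior. The whole degeneration claim rests on this inequality.

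\begin{itemize}
\item Andrews--Clutterbuck's theorem is the non-strict bound and comes with no characterization of the equality case. So there is no ``equality analysis'' to quote, and extracting strictness from their two-point argument would be a new result.
\item Pointing to the abstract does not help, because the paper justifies that non-attainment claim only through the very ingredient you are missing.
\end{itemize}

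That ingredient is the quantitative inequality of Amato--Bucur--Fragal\`a, $\Gamma_2(\Omega,0)\ge 3\pi^2/D^2+c_N w_\Omega^6/D^8$, where $w_\Omega$ is the width. It gives $\Gamma_2(\Omega_0,0)\ge 3\pi^2+c_N w_{\Omega_0}^6>3\pi^2$, and with it your contradiction argument is complete.
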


The main ingredient in the case \(p>2\) is the enhanced estimate
\eqref{eq:enhanced-fundamental-gap}, which prevents minimizing sequences
from collapsing. Thus \(p=2\) appears as the critical exponent for the
compactness of the diameter-normalized fundamental-gap optimization
problem. Indeed, when $p=2$, the quantitative refinement of the fundamental-gap inequality obtained by Amato, Bucur, and Fragalà \cite[Theorem~1]{AmatoBucurFragala2024} gives, for $N \ge 2$, that, letting $w_\Omega$ denote the width of $\Omega$, we have
\[
\Gamma_2(\Omega,0)
\geq
\frac{3\pi^2}{D^2}
+
c_N\frac{w_\Omega^6}{D^8}
>
\frac{3\pi^2}{D^2}.
\]
 Hence
\(\mathcal G_{2,N}\) is not attained by any
\(\Omega\in\mathcal C_N\), but only approached along collapsing
sequences of convex domains. In contrast, when \(p>2\),
\eqref{eq:enhanced-fundamental-gap} forces every minimizing sequence to
remain nondegenerate, and the infimum is therefore attained. The
minimizer is not unique, since translations and
orthogonal transformations preserve both the diameter and the spectrum.
 
\smallskip

We conclude this discussion with the following natural open problem.

\medskip
\noindent\textbf{Open problem.}
For \(N\geq2\) and \(p>2\), determine the exact value of
\[
\mathcal G_{p,N}
=
\inf_{\Omega\in\mathcal C_N}
\operatorname{diam}(\Omega)^p\Gamma_p(\Omega,0),
\]
and characterize the convex domains attaining this infimum. In particular,
it is natural to ask how \(\mathcal G_{p,N}\) compares with the sharp
one-dimensional constant $(p-1)(2^p-1)\pi_p^p.$
In view of the estimate
\[
\limsup_{p\downarrow2}\mathcal G_{p,N}\leq3\pi^2,
\]
we believe that the limit also holds, namely, $\lim_{p\downarrow2}\mathcal G_{p,N}=3\pi^2.$
More generally, determine whether allowing convex potentials can further
decrease the optimal gap; namely, whether
\[
\inf_{\substack{\Omega\in\mathcal C_N\\ V\ \mathrm{convex}}}
\operatorname{diam}(\Omega)^p\Gamma_p(\Omega,V)
=
\mathcal G_{p,N}.
\]

\smallskip

The remainder of the paper is organized as follows. Section~2 establishes the sharp one-dimensional fundamental-gap estimate for convex potentials. Section~3 develops the regularization framework and proves the log-concavity of the first eigenfunction. Finally, Section~4 studies the fundamental gap in higher dimensions, including collapsing domain asymptotics, degenerate weighted Poincaré inequality, stability estimates for the $L^p$-Poincaré inequality, and the existence and degeneration of gap minimizers.

\section{The Sharp Fundamental Gap in One Dimension}
\label{sec:sharp-one-dimensional-fundamental-gap}

In this section, we prove the sharp one-dimensional fundamental-gap
estimate for every \(p>1\) and every convex potential.

We first recall the following one-dimensional comparison principle. If
\(V\) is convex on \(I_D\), then there exists an affine function
\(\ell(x)=ax+b\) such that
\begin{equation}\label{eq:convex-potential-linear-comparison}
\Gamma_p(I_D,V)
\geq
\Gamma_p(I_D,\ell)
=
\Gamma_p(I_D,ax),
\end{equation}
with strict inequality unless \(V\) is affine. This is the homogeneous
Dirichlet analogue of \cite[Theorem~1.2]{AndrewsClutterbuckHauer2021},
whose comparison argument carries over unchanged. It therefore remains
to minimize the gap among linear potentials.

For \(\ell>0\) and \(a,\beta\in\mathbb R\), define
\[
\nu(\ell,a)
:=
\lambda_{1,p}\bigl((0,\ell),ax\bigr),
\qquad
\mu(\beta)
:=
\lambda_{1,p}\bigl((0,1),\beta x\bigr).
\]

We first record the exact scaling relation that reduces the first
eigenvalue on an interval of arbitrary length to the corresponding
problem on the unit interval.

\begin{lemma}\label{lem:nu-scaling-global}
For any \(\ell>0\) and \(a\in\mathbb{R}\), we have
\[
\nu(\ell,a) = \ell^{-p}\mu\bigl(a\ell^{p+1}\bigr).
\]
\end{lemma}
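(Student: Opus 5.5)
The plan is to prove the identity by the change of variables $x=\ell t$ in the Rayleigh quotient, using the variational characterization \eqref{eq:eigenvalue} with $k=1$. Concretely,
\[
\nu(\ell,a)=\inf_{0\neq v\in W_0^{1,p}(0,\ell)}
\frac{\int_0^\ell \bigl(|v'(x)|^p+ax|v(x)|^p\bigr)\,dx}{\int_0^\ell |v(x)|^p\,dx}.
\]
Given $v\in W_0^{1,p}(0,\ell)$, we set $w(t):=v(\ell t)$ for $t\in(0,1)$. This map is a linear bijection $W_0^{1,p}(0,\ell)\to W_0^{1,p}(0,1)$ with $w'(t)=\ell v'(\ell t)$.

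We then substitute $x=\ell t$, $dx=\ell\,dt$, term by term:
\[
\int_0^\ell|v'|^p\,dx=\ell^{1-p}\int_0^1|w'|^p\,dt,\qquad
\int_0^\ell ax|v|^p\,dx=a\ell^{2}\int_0^1 t|w|^p\,dt,\qquad
\int_0^\ell|v|^p\,dx=\ell\int_0^1|w|^p\,dt.
\]
The Rayleigh quotient of $v$ for the potential $ax$ on $(0,\ell)$ therefore equals
\[
\ell^{-p}\,\frac{\int_0^1\bigl(|w'|^p+a\ell^{p+1}t|w|^p\bigr)\,dt}{\int_0^1|w|^p\,dt},
\]
which is $\ell^{-p}$ times the Rayleigh quotient of $w$ for the potential $\beta t$ with $\beta=a\ell^{p+1}$.

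Since the map $v\mapsto w$ is a bijection between the admissible classes, taking infima on both sides gives $\nu(\ell,a)=\ell^{-p}\mu(a\ell^{p+1})$.

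The only point needing care is that $\lambda_{1,p}$ defined through the genus in \eqref{eq:eigenvalue} coincides with the infimum of the Rayleigh quotient over $\mathcal K$. This is standard: a compact symmetric set of genus at least one is any set $\{\pm u\}$, so the minimax reduces to $\inf_{\mathcal K}\mathcal E_V$.

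Alternatively, one can argue directly at the level of eigenfunctions. The map $v\mapsto w$, renormalized in $L^p$, sends $\mathcal K$ to $\mathcal K$, preserves compactness and symmetry, and hence preserves genus. The scaled energies differ by the factor $\ell^{-p}$, so the identity holds for every $k$, and in particular for $k=1$. There is no real obstacle beyond this bookkeeping of exponents.
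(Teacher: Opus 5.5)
Your proof is correct and takes the paper's route: the substitution $x=\ell t$ in the Rayleigh quotient, with the same three scaling identities for $\int|v'|^p$, $\int x|v|^p$ and $\int|v|^p$, produces the factor $\ell^{-p}$ and the parameter $a\ell^{p+1}$. Your remark that the genus-one minimax value coincides with $\inf_{\mathcal K}\mathcal E_V$, and your observation that the rescaling preserves genus so the identity holds for every $k$, are small additions that the paper leaves implicit.
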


\begin{proof}
For any \(u\in W_0^{1,p}(0,\ell)\), let \(v(t):=u(\ell t) \in W_0^{1,p}(0,1)\). The change of variables \(y=\ell t\) gives
\[
\int_0^\ell |u'|^p\,dy = \ell^{1-p}\int_0^1 |v'|^p\,dt \quad \text{and} \quad \int_0^\ell y^k |u|^p\,dy = \ell^{k+1}\int_0^1 t^k |v|^p\,dt, \quad k=0,1.
\]
Substituting these into the Rayleigh quotient immediately yields the result.
\end{proof}

We next establish the concavity and evenness of the centered first
eigenvalue, using its variational characterization and the reflection
symmetry of the unit interval.

\begin{proof}[\textbf{Proof of Lemma~\ref{lem:mu-hat-concavity-symmetry}.}]
The variational characterization gives
\[
\mu(\beta)
=
\inf_{\substack{w\in W_0^{1,p}(0,1)\\ \|w\|_{L^p(0,1)}=1}}
\left\{
\int_0^1|w'|^p\,dt
+
\beta\int_0^1t|w|^p\,dt
\right\}.
\]
Thus \(\mu\), being the infimum of affine functions of \(\beta\), is
concave.

For \(\widetilde w(t):=w(1-t)\), one has $\int_0^1t|\widetilde w(t)|^p\,dt
=
\int_0^1(1-t)|w(t)|^p\,dt.$
Taking infima in the corresponding Rayleigh quotients yields $\mu(-\beta)=\mu(\beta)-\beta.$
Hence
\[
\widehat\mu(-\beta)
=
\mu(-\beta)+\frac{\beta}{2}
=
\widehat\mu(\beta).
\]
Therefore \(\widehat\mu\) is even and concave, and consequently
\[
\widehat\mu(0)
\geq
\frac{\widehat\mu(\beta)+\widehat\mu(-\beta)}{2}
=
\widehat\mu(\beta),
\]
which completes the proof.
\end{proof}

We now combine the nodal decomposition of a second eigenfunction with
the scaling relation and the concavity of the centered first eigenvalue
to prove the sharp one-dimensional gap estimate and characterize the
equality case.

\begin{proof}[\textbf{Proof of Theorem~\ref{thm:sharp-one-dimensional-fundamental-gap}.}]
\textbf{(i).} Define $F(\beta)
:=
\widehat\mu(0)-\widehat\mu(\beta).$
By Lemma~\ref{lem:mu-hat-concavity-symmetry}, \(F\) is nonnegative,
even, and convex, with \(F(0)=0\). Hence
\begin{equation}\label{eq:D-convex-scaling}
F(\theta\beta)
\leq
\theta F(\beta)
\qquad
\text{for }\beta\in\mathbb R,\quad 0\leq\theta\leq1.
\end{equation}

For \(\ell>0\), set $\eta(\ell,a)
:=
\nu(\ell,a)-\frac{a\ell}{2}.$
After translating \((0,\ell)\) to
\((-\ell/2,\ell/2)\), \(\eta(\ell,a)\) is the first eigenvalue for the
centered linear potential \(ay\). By
Lemma~\ref{lem:nu-scaling-global},
\begin{equation}\label{eq:eta-scaling-global}
\eta(\ell,a)
=
\ell^{-p}\widehat\mu\bigl(a\ell^{p+1}\bigr).
\end{equation}

Let \(u_2(\cdot;a)\) be an eigenfunction corresponding to
\(\lambda_{2,p}(I_D,ax)\). By the one-dimensional nodal theorem,
\(u_2(\cdot;a)\) has exactly two nodal intervals, and hence exactly one
zero \(z(a)\in I_D\); see
\cite[Lemma~2.2]{AndrewsClutterbuckHauer2021} and \cite[Theorem~5 and the subsequent corollary]
{walter-sturm-liouville-p}. Set
\[
r:=\frac{z(a)+D/2}{D}\in(0,1).
\]
The two nodal intervals have lengths \(rD\) and \((1-r)D\).
The restriction of \(u_2(\cdot;a)\) to either nodal interval has a
fixed sign and is therefore its first Dirichlet eigenfunction.

The centers of the two nodal intervals are $-\frac{(1-r)D}{2}$ and $\frac{rD}{2}.$
Translating them to centered intervals gives
\begin{equation}\label{eq:nodal-left-centered}
\lambda_{2,p}(I_D,ax)
=
-\frac{a(1-r)D}{2}
+
\eta(rD,a)
\end{equation}
and
\begin{equation}\label{eq:nodal-right-centered}
\lambda_{2,p}(I_D,ax)
=
\frac{arD}{2}
+
\eta((1-r)D,a).
\end{equation}
Multiplying the first identity by \(r\), the second by \(1-r\), and
adding, the linear terms cancel:
\begin{equation}\label{eq:lambda1-weighted-eta}
\lambda_{2,p}(I_D,ax)
=
r\eta(rD,a)
+
(1-r)\eta((1-r)D,a).
\end{equation}
Since \(I_D\) is centered,
\begin{equation}\label{eq:lambda0-eta}
\lambda_{1,p}(I_D,ax)=\eta(D,a).
\end{equation}

Set \(A:=aD^{p+1}\). From
\eqref{eq:eta-scaling-global}--\eqref{eq:lambda0-eta},
\[
\begin{aligned}
D^p\Gamma_p(I_D,ax)
={}&
r^{1-p}\widehat\mu(Ar^{p+1})
+
(1-r)^{1-p}\widehat\mu(A(1-r)^{p+1})
-
\widehat\mu(A)\\
={}&
\mu(0)
\bigl(r^{1-p}+(1-r)^{1-p}-1\bigr)\\
&-
r^{1-p}F(Ar^{p+1})
-
(1-r)^{1-p}F(A(1-r)^{p+1})
+
F(A).
\end{aligned}
\]
By \eqref{eq:D-convex-scaling},
\[
F(Ar^{p+1})
\leq
r^{p+1}F(A),
\qquad
F(A(1-r)^{p+1})
\leq
(1-r)^{p+1}F(A).
\]
Consequently,
\[
\begin{aligned}
&r^{1-p}F(Ar^{p+1})
+
(1-r)^{1-p}F(A(1-r)^{p+1})\leq
\bigl(r^2+(1-r)^2\bigr)F(A)
\leq
F(A),
\end{aligned}
\]
and therefore
\[
D^p\Gamma_p(I_D,ax)
\geq
\mu(0)
\bigl(r^{1-p}+(1-r)^{1-p}-1\bigr).
\]
Since the function $r\mapsto r^{1-p}+(1-r)^{1-p}$
is strictly convex and symmetric about \(r=1/2\), $r^{1-p}+(1-r)^{1-p}\geq2^p.$
Moreover, $\mu(0)=(p-1)\pi_p^p.$
Thus
\[
\Gamma_p(I_D,ax)
\geq
(p-1)(2^p-1)
\left(\frac{\pi_p}{D}\right)^p.
\]

For \(a=0\),
\[
\lambda_{k,p}(I_D,0)
=
(p-1)
\left(\frac{k\pi_p}{D}\right)^p,
\qquad k=1,2,\ldots,
\]
so equality holds. Conversely, if equality holds, then necessarily
\(r=1/2\). Equations \eqref{eq:nodal-left-centered} and
\eqref{eq:nodal-right-centered} then give \(a=0\).

\smallskip

\textbf{(ii).} By the linear comparison
\eqref{eq:convex-potential-linear-comparison}, there exists an affine
function \(\ell(x)=ax+b\) such that $\Gamma_p(I_D,V)
\geq
\Gamma_p(I_D,\ell)
=
\Gamma_p(I_D,ax).$ The result (i) then yields
\[
\Gamma_p(I_D,V)
\geq
\Gamma_p(I_D,ax)
\geq
\Gamma_p(I_D,0)
=
(p-1)(2^p-1)
\left(\frac{\pi_p}{D}\right)^p.
\]

If equality holds and \(V\) is not affine, then the first comparison
is strict, a contradiction. Thus \(V(x)=ax+b\). By
(i), equality forces
\(a=0\), so \(V\) is constant. Conversely, a constant potential shifts
all eigenvalues by the same amount and therefore attains equality.
\end{proof}

\section{Regularization and Log-Concavity of the First Eigenfunction}
\label{sec:logconca}

In this section, we first establish the regularity and boundary properties of the positive first eigenfunction. Then, we introduce a uniformly elliptic regularization of the first eigenvalue problem and establish the regularity and boundary nondegeneracy of the corresponding regularized eigenfunctions. Finally, we prove uniform boundary concavity and apply a two-point maximum principle to their logarithms. Passing to the limit yields the log-concavity of the original positive first eigenfunction.

 Throughout this section, let \(1<p<\infty\) and \(N\geq2\). We first
assume that \(\Omega\subset\mathbb R^N\) is a bounded convex domain with
\(\partial\Omega\in C^{2,\alpha_\Omega}\) for some
\(\alpha_\Omega\in(0,1)\), and that
\(V\in C^{0,\alpha_V}(\overline\Omega)\) for some
\(\alpha_V\in(0,1)\) is convex, namely,
\[
V\bigl((1-t)x+ty\bigr)
\leq
(1-t)V(x)+tV(y)
\]
for every \(x,y\in\overline\Omega\) and \(t\in[0,1]\).
These additional regularity assumptions are used only in the
regularization and boundary arguments below; the final results will be
extended to arbitrary bounded convex domains and continuous convex
potentials by approximation.

The eigenvalue problem \eqref{eq:introduction-eigenvalue-problem} is
understood in the weak sense. A number \(\lambda\in\mathbb R\) is called
an eigenvalue if there exists a nontrivial function
\(u\in W_0^{1,p}(\Omega)\) such that
\begin{equation}\label{eq:weighted-p-laplacian-eigenvalue}
\int_\Omega
|\nabla u|^{p-2}\nabla u\cdot\nabla\varphi\,dx
+
\int_\Omega
V(x)|u|^{p-2}u\varphi\,dx
=
\lambda
\int_\Omega
|u|^{p-2}u\varphi\,dx
\end{equation}
for every \(\varphi\in W_0^{1,p}(\Omega)\). Such nontrivial
function \(u\) is called an eigenfunction associated with \(\lambda\). 

The standard Ljusternik--Schnirelmann variational construction for the
\(p\)-Laplacian yields that each
\(\lambda_{k,p}(\Omega,V)\) defined in \eqref{eq:eigenvalue} is an eigenvalue, and any corresponding
nontrivial critical point is called an eigenfunction associated with
\(\lambda_{k,p}(\Omega,V)\), see \cite{Amann1972,GarciaAzoreroPeral1987,Szulkin1988,
CuestaDeFigueiredoGossez1999}.

It follows immediately from
\eqref{eq:weighted-p-laplacian-eigenvalue} that adding a constant
\(c\in\mathbb R\) to \(V\) shifts the first eigenvalue by \(c\) without
changing the \(L^p\)-normalized first eigenfunctions. Therefore, upon
replacing \(V\) by \(V-\min_{\overline\Omega}V\), we may assume
throughout that \(V\geq0\) in \(\overline\Omega\).

Recall that
\[
\mathcal E_V(w)
=
\int_\Omega|\nabla w|^p\,dx
+
\int_\Omega V(x)|w|^p\,dx,
\qquad
w\in W_0^{1,p}(\Omega),
\]
and
\[
\mathcal K
=
\left\{
w\in W_0^{1,p}(\Omega):
\|w\|_{L^p(\Omega)}=1
\right\}.
\]
The first Dirichlet eigenvalue of
\eqref{eq:weighted-p-laplacian-eigenvalue} is characterized
variationally by
\cite[Theorem 1]{otani-teshima} or \cite[Theorem 2.7]{del-pezzo-fernandez-bonder}
\begin{equation}\label{eq:first-eigenvalue-variational}
    \lambda_{1,p}(\Omega,V)
:=
\inf_{w\in\mathcal K}\mathcal E_V(w).
\end{equation}

The infimum in \eqref{eq:first-eigenvalue-variational} is attained by a
nonnegative function \(u_1\in\mathcal K\), which is an eigenfunction
associated with \(\lambda_{1,p}(\Omega,V)\); see
\cite[Theorem~2.7]{del-pezzo-fernandez-bonder}. By the strong maximum
principle, \(u_1>0\) in \(\Omega\); see
\cite[Theorem~2.9]{del-pezzo-fernandez-bonder}. Moreover,
\(\lambda_{1,p}(\Omega,V)\) is simple, meaning that any two
corresponding eigenfunctions are proportional; see
\cite[Theorem~2.13]{del-pezzo-fernandez-bonder}. Consequently, there
exists a unique positive first eigenfunction normalized by
\(\|u_1\|_{L^p(\Omega)}=1\).

 \subsection{Regularity of the First Eigenfunction}

We begin with the global \(C^{1,\beta}\)-regularity of the positive
first eigenfunction and its higher interior regularity away from the
critical set.

\begin{proposition}
\label{prop:first-eigenfunction-regularity}
Let \(u\) be the positive first eigenfunction. Then there exist
\(\beta\in(0,1)\) and \(C>0\) such that $u\in C^{1,\beta}(\overline\Omega)$ and $\|u\|_{C^{1,\beta}(\overline\Omega)}\leq C.$
Moreover, setting
\begin{equation}\label{eq:zeroset}
    \mathcal Z_u:=\{x\in\Omega:\nabla u(x)=0\},
\end{equation}
for every $0<\gamma<\min\{\alpha_V,\beta\},$
one has $u\in C_{\mathrm{loc}}^{2,\gamma}
\bigl(\Omega\setminus\mathcal Z_u\bigr).$
In particular, the equation
\[
-\Delta_pu+V(x)u^{p-1}
=
\lambda_{1,p}(\Omega,V)u^{p-1}
\]
holds pointwise in \(\Omega\setminus\mathcal Z_u\).
\end{proposition}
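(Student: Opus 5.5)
The plan has three steps: an $L^\infty$ bound, then global $C^{1,\beta}$ regularity, then Schauder theory away from the critical set.

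\textbf{Step 1: $L^\infty$ bound.} Since $V\ge0$ is bounded and $u\in W_0^{1,p}(\Omega)$ is a weak solution of
\[
-\Delta_p u=(\lambda_{1,p}(\Omega,V)-V)u^{p-1},
\]
we run a Moser iteration, or invoke the standard $L^\infty$ estimate for eigenfunctions of the $p$-Laplacian with bounded potential. This gives
\[
\|u\|_{L^\infty(\Omega)}\le C\bigl(N,p,\Omega,\lambda_{1,p}(\Omega,V),\|V\|_\infty\bigr),
\]
using the normalization $\|u\|_{L^p}=1$. Consequently the right-hand side $f:=(\lambda_{1,p}-V)u^{p-1}$ is bounded in $L^\infty(\Omega)$.

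\textbf{Step 2: global $C^{1,\beta}$ regularity.} With $f\in L^\infty$ and $\partial\Omega\in C^{2,\alpha_\Omega}$, we apply Lieberman's boundary regularity theorem for degenerate or singular quasilinear equations (with DiBenedetto and Tolksdorf in the interior). This yields $u\in C^{1,\beta}(\overline\Omega)$ for some $\beta\in(0,1)$, with a norm bound depending only on $N$, $p$, $\Omega$ and $\|f\|_\infty$. Hence both $\beta$ and $C$ are controlled by the data.

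\textbf{Step 3: higher regularity off $\mathcal Z_u$.} Fix $x_0\in\Omega\setminus\mathcal Z_u$. By continuity of $\nabla u$ there is a ball $B=B_\rho(x_0)$ with $|\nabla u|\ge c>0$ on $B$. On $B$ the equation is uniformly elliptic, since the flux $A(\xi)=|\xi|^{p-2}\xi$ is smooth with ellipticity comparable to $|\xi|^{p-2}$ on $\{c\le|\xi|\le M\}$.

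\emph{Sobolev regularity and linearization.} Difference quotients give $u\in W^{2,2}(B')$ for a smaller ball $B'$. Each derivative $v=\partial_k u$ then weakly solves the divergence-form linear equation
\[
\operatorname{div}\bigl(DA(\nabla u)\nabla v\bigr)=\partial_k f .
\]
Its coefficients $a_{ij}=DA(\nabla u)$ lie in $C^{0,\beta'}$, where $\beta'$ is determined by $\beta$ and $p$ through smoothness of $A$ on the annulus.

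\emph{Hölder regularity of the right-hand side.} We have $f\in C^{0,\min\{\alpha_V,\beta\}}$, since $V\in C^{0,\alpha_V}$ and $u^{p-1}$ is $C^{1}$ where $u>0$.

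\emph{Nondivergence form and Schauder.} Equivalently, we write the equation on $B$ in nondivergence form,
\[
a_{ij}(\nabla u)\,\partial_{ij}u=-f,
\]
whose coefficients are Hölder continuous. Schauder estimates, applied after first getting $u\in W^{2,q}$ by $L^q$ theory, then give $u\in C^{2,\gamma}(B'')$ for every $\gamma<\min\{\alpha_V,\beta\}$.

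\emph{Matching the Hölder exponent.} One bootstrap step may be needed so that the coefficient Hölder exponent is not the limiting one. Once $u\in C^{2,\gamma_0}$, the coefficients $a_{ij}(\nabla u)$ are $C^{0,\gamma_0}$-regular and even Lipschitz, so the exponent is limited only by $f$.

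\emph{Pointwise equation.} Since $u\in C^2$ near $x_0$, integrating by parts in the weak formulation shows that the equation holds pointwise on $\Omega\setminus\mathcal Z_u$.

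\textbf{Main obstacle.} I expect the main work to be Step 3: checking that the exponent bookkeeping really yields every $\gamma<\min\{\alpha_V,\beta\}$. In particular one must ensure the coefficients $DA(\nabla u)$ are at least $C^{0,\gamma}$. For $1<p<2$ the map $A$ is still smooth away from $0$, so this is fine locally. Step 2 is taken directly from Lieberman's theorem, and Step 1 is routine.
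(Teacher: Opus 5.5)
Your proposal is correct and is essentially the paper's argument. You obtain global $C^{1,\beta}$ regularity from the $L^\infty$ bound plus Lieberman's boundary estimate; the paper cites Otani--Teshima and Lieberman for this. On compact subsets of $\Omega\setminus\mathcal Z_u$, where $|\nabla u|$ is bounded below, you then use uniform ellipticity with $C^{0,\beta}$ coefficients and Schauder theory.

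This is exactly the difference-quotient/Schauder alternative the paper records at the end of its proof. The paper's primary route instead goes through weak--viscosity equivalence (Julin--Juutinen) and the $C^{2,\alpha}$ theory of Caffarelli--Cabr\'e. Your variational route avoids that machinery. Your bootstrap (Lipschitz coefficients once $u\in C^{2,\gamma_0}$, and $f\in C^{0,\alpha_V}_{\mathrm{loc}}$ since $u>0$ in $\Omega$) even gives the slightly stronger conclusion $u\in C^{2,\alpha_V}_{\mathrm{loc}}(\Omega\setminus\mathcal Z_u)$.
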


\begin{proof}
The global \(C^{1,\beta}\)-regularity follows from
\cite[Theorem~1]{otani-teshima}, while the corresponding estimate is
provided by Lieberman's global boundary regularity theorem
\cite[Theorem~1]{lieberman-boundary-regularity}.

Fix \(U\Subset\Omega\setminus\mathcal Z_u\), and choose an open set
\(U_1\) such that $\overline U\subset U_1\Subset\Omega\setminus\mathcal Z_u.$
By continuity of \(\nabla u\), there exist constants
\(\delta_U,M_U>0\) such that
\[
0<\delta_U\leq|\nabla u|\leq M_U
\qquad\text{in }U_1.
\]

Set $h:=
\bigl(\lambda_{1,p}(\Omega,V)-V\bigr)u^{p-1}.$
Since \(u\in C^{1,\beta}(\overline\Omega)\) and
\(V\in C^{0,\alpha_V}(\overline\Omega)\), one has
\(h\in C(\overline{U_1})\). The weak eigenvalue equation is therefore $-\Delta_pu=h$ in $U_1.$
By the equivalence between weak and viscosity solutions for the
nonhomogeneous \(p\)-Laplace equation
\cite{julin-juutinen}, \(u\) is also a viscosity solution of this
equation. Since \(|\nabla u|>0\) in \(U_1\), the equation can be divided by
\(|\nabla u|^{p-2}\) and rewritten, in the viscosity sense, as
\[
-a_{ij}(x)u_{ij}=f(x)
\qquad\text{in }U_1,
\]
where
\[
a_{ij}(x)
=
\delta_{ij}
+
(p-2)\frac{u_i(x)u_j(x)}{|\nabla u(x)|^2},
\qquad
f(x)
=
\bigl(\lambda_{1,p}(\Omega,V)-V(x)\bigr)
u(x)^{p-1}|\nabla u(x)|^{2-p}.
\]

Writing $e:=\frac{\nabla u}{|\nabla u|},$
we have $a_{ij}
=
\delta_{ij}+(p-2)e_i e_j.$
Hence
\[
\min\{1,p-1\}|\xi|^2
\leq
a_{ij}(x)\xi_i\xi_j
\leq
\max\{1,p-1\}|\xi|^2
\]
for every \(x\in U_1\) and \(\xi\in\mathbb R^N\), so
\(A=(a_{ij})\) is uniformly elliptic. Since
\(\overline{U_1}\Subset\Omega\setminus\mathcal Z_u\), $\inf_{\overline{U_1}}|\nabla u|>0,$
and therefore
\[
a_{ij}\in C^{0,\beta}(\overline{U_1}),
\qquad
f\in C^{0,\theta}(\overline{U_1}),
\qquad
\theta:=\min\{\alpha_V,\beta\}.
\]

Define
\[
F(M,x):=a_{ij}(x)M_{ij},
\qquad
M=(M_{ij})\in\mathbb S^N,
\]
where
\[
\mathbb S^N
:=
\{M\in\mathbb R^{N\times N}:M^T=M\}
\]
denotes the space of real symmetric \(N\times N\) matrices.
Then \(u\) is a viscosity solution of
\[
F(D^2u,x)=-f(x)
\qquad\text{in }U_1.
\]
The operator \(F\) is uniformly elliptic, its dependence on \(x\) is
\(C^{0,\beta}\), and each frozen operator \(F(\,\cdot\,,x_0)\) is a
constant-coefficient linear uniformly elliptic operator. After the
standard normalization at \(x_0\), the assumptions of
\cite[Theorem~8.1]{caffarelli-cabre} are satisfied. Hence, for every
\(U\Subset U_1\),
\[
u\in C^{2,\gamma}(\overline U)
\qquad
\text{for every }
0<\gamma<\min\{\alpha_V,\beta\}.
\]
Since \(U\Subset\Omega\setminus\mathcal Z_u\) is arbitrary, the
eigenvalue equation holds classically in
\(\Omega\setminus\mathcal Z_u\).

Alternatively, the same conclusion follows from a difference-quotient
argument. Since \(|\nabla u|\) is bounded away from zero in \(U_1\),
the linearized coefficients are uniformly elliptic and belong to
\(C^{0,\beta}\). Applying the interior Schauder estimate
\cite[Theorem~8.32]{gilbarg-trudinger} to the difference quotients of
\(u\), and then letting the increment tend to zero, yields $u\in C_{\mathrm{loc}}^{2,\gamma}(U_1)$ for every $0<\gamma<\min\{\alpha_V,\beta\}.$
\end{proof}

We next establish the uniform boundary nondegeneracy and the resulting
higher regularity in a collar neighborhood of \(\partial\Omega\).

\begin{proposition}
\label{prop:first-eigenfunction-boundary-regularity}
Let \(u\) be the positive first
eigenfunction, and let \(\nu\) denote the outward unit normal to
\(\partial\Omega\). Then there exist constants
\(\eta_0,\delta_0,\eta>0\) and \(\gamma\in(0,1)\) such that
\[
-\frac{\partial u}{\partial\nu}(y)\geq\eta_0
\quad\text{for every }y\in\partial\Omega,
\qquad
|\nabla u(x)|\geq\eta
\quad\text{for every }x\in\mathcal N_{\delta_0},
\]
where
\[
\mathcal N_\delta
:=
\bigl\{
x\in\overline\Omega:
\operatorname{dist}(x,\partial\Omega)<\delta
\bigr\}.
\]
Moreover, $u\in C^{2,\gamma}
\left(
\overline{\mathcal N_{\delta_0}\cap\Omega}
\right).$
\end{proposition}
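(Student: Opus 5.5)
We plan to establish the Hopf-type bound \(-\partial_\nu u\geq\eta_0\) first. The interior estimates then follow by continuity of \(\nabla u\), and the \(C^{2,\gamma}\) bound in the collar follows by repeating the argument of Proposition~\ref{prop:first-eigenfunction-regularity} up to the boundary.

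\textbf{Step 1 (Hopf lemma).} Since \(V\geq0\) after the normalization made at the start of the section, the function \(u>0\) satisfies
\[
-\Delta_pu+V u^{p-1}=\lambda u^{p-1}\geq0
\]
weakly. It is therefore a supersolution of \(-\Delta_p w+Vw^{p-1}\geq0\) with \(V\) bounded. Because \(\partial\Omega\in C^{2,\alpha_\Omega}\), the domain satisfies a uniform interior ball condition with some radius \(R>0\).

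For each \(y\in\partial\Omega\), we compare \(u\) on the annulus \(A_y=B_R(x_y)\setminus\overline{B_{R/2}(x_y)}\) with the radial barrier
\[
w(x)=\varepsilon\bigl(e^{-k|x-x_y|^2}-e^{-kR^2}\bigr).
\]
For \(k\) large, depending only on \(N,p,R,\|V\|_\infty\), a direct computation gives \(-\Delta_pw+Vw^{p-1}\leq0\) in \(A_y\). This works because \(|\nabla w|\) stays bounded away from zero on the annulus, so the radial \(p\)-Laplacian computation is nondegenerate.

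We choose \(\varepsilon=\min_{K}u\), where \(K=\{x:\operatorname{dist}(x,\partial\Omega)\geq R/2\}\) is compact and \(\min_K u>0\) by the strong maximum principle. With this choice \(w\leq u\) on \(\partial A_y\). The weak comparison principle for \(-\Delta_p+V|\cdot|^{p-2}\cdot\) with \(V\geq0\) then gives \(w\leq u\) in \(A_y\). Since \(u(y)=w(y)=0\), we obtain
\[
-\partial_\nu u(y)\geq -\partial_\nu w(y)=2\varepsilon kR\,e^{-kR^2}=:\eta_0,
\]
uniformly in \(y\). Here \(u\in C^{1,\beta}(\overline\Omega)\) by Proposition~\ref{prop:first-eigenfunction-regularity}, so the normal derivative exists classically.

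\textbf{Step 2 (nondegeneracy in a collar).} On \(\partial\Omega\) we have \(|\nabla u|=-\partial_\nu u\geq\eta_0\), since \(u\) vanishes there and the tangential derivatives are zero. Because \(\nabla u\) is uniformly continuous on \(\overline\Omega\) with a \(C^{0,\beta}\) modulus bounded by \(C\), there is \(\delta_0>0\) with \(|\nabla u|\geq\eta:=\eta_0/2\) on \(\mathcal N_{\delta_0}\). Shrinking \(\delta_0\) if necessary, we may also assume \(\mathcal N_{2\delta_0}\) is a \(C^{2,\alpha_\Omega}\) collar.

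\textbf{Step 3 (boundary Schauder regularity).} On \(\mathcal N_{2\delta_0}\cap\Omega\) the gradient does not vanish, so, as in Proposition~\ref{prop:first-eigenfunction-regularity}, \(u\) solves the linear equation
\[
-a_{ij}u_{ij}=f,\qquad a_{ij}=\delta_{ij}+(p-2)\frac{u_iu_j}{|\nabla u|^2}\in C^{0,\beta},\qquad f\in C^{0,\theta},
\]
with zero Dirichlet data on the \(C^{2,\alpha_\Omega}\) boundary. To pass from the viscosity or weak formulation to a strong \(W^{2,q}\) solution, we can use the weak formulation in divergence form, \(\operatorname{div}(|\nabla u|^{p-2}\nabla u)=-h\), whose coefficients are Hölder continuous where \(|\nabla u|\geq\eta\). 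Difference quotients in tangential directions after flattening the boundary give \(W^{2,2}\); the equation then recovers the normal second derivative.

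Having \(u\in W^{2,2}_{\mathrm{loc}}\) up to the boundary piece, we apply the boundary Schauder estimate \cite[Theorem~6.6 and Lemma~6.18]{gilbarg-trudinger} to the nondivergence equation on \(\mathcal N_{2\delta_0}\cap\Omega\), using a cutoff supported in the collar. This yields \(u\in C^{2,\gamma}(\overline{\mathcal N_{\delta_0}\cap\Omega})\) for \(\gamma<\min\{\alpha_V,\alpha_\Omega,\beta\}\).

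The main obstacle is this upgrade of regularity at the boundary, namely justifying that the weak solution is a strong solution to which the linear boundary Schauder theory applies. I would handle it through the tangential difference-quotient argument sketched above, relying on the uniform lower bound \(|\nabla u|\geq\eta\) from Step 2.
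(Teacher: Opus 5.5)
Your proof is correct in substance. Step~2 coincides with the paper, but Step~1 takes a different route and Step~3 contains an unnecessary detour.

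\textbf{Step~1.} The paper simply invokes the boundary point lemma for the $p$-Laplacian \cite{otani-teshima,esposito-farina-montoro-sciunzi} to get $-\partial_\nu u>0$ at each boundary point. It then minimizes the continuous function $-\partial_\nu u$ over the compact set $\partial\Omega$. You instead prove the bound directly with the radial barrier $w=\varepsilon(e^{-k|x-x_y|^2}-e^{-kR^2})$ on interior annuli, and this works:
\begin{itemize}
\item $|\nabla w|$ is bounded away from zero on $\overline{A_y}$, and the radial computation gives $\Delta_p w\geq\|V\|_\infty w^{p-1}$ for $k$ large, for every $p>1$;
\item the normalization $V\geq0$ makes $-\Delta_p+V|\cdot|^{p-2}\cdot$ monotone, so testing with $(w-u)_+$ yields comparison;
\item the choice $\varepsilon=\min_K u$ takes care of the inner sphere.
\end{itemize}
Your version is self-contained and gives an explicit uniform $\eta_0$, at the cost of a short computation.

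\textbf{Step~3.} The ``main obstacle'' you identify is not one. Proposition~\ref{prop:first-eigenfunction-regularity} already gives $u\in C^2$ in the open collar, with the equation holding pointwise there. Moreover, \cite[Lemma~6.18]{gilbarg-trudinger} requires only $u\in C^0(\Omega\cup T)\cap C^2(\Omega)$, together with H\"older coefficients and right-hand side up to the boundary portion $T$. This is exactly how the paper concludes, combined with Lemma~6.16 and a finite covering. So the tangential difference-quotient $W^{2,2}$ argument can be dropped entirely. It would not have been the right input anyway: $W^{2,2}$ does not provide the interior $C^2$ that Lemma~6.18 needs, and Theorem~6.6 is only an a priori estimate for functions already in $C^{2,\alpha}(\overline\Omega)$.

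\textbf{Two bookkeeping corrections.}
\begin{itemize}
\item The H\"older exponent is wrong for $1<p<2$. Up to $\partial\Omega$, the right-hand side $f=(\lambda-V)u^{p-1}|\nabla u|^{2-p}$ is in general only $C^{0,p-1}$ in that range, because $u^{p-1}$ behaves like $\operatorname{dist}(x,\partial\Omega)^{p-1}$ there. For instance, for constant $V$ in one dimension, $u''\sim -c\,\operatorname{dist}(x,\partial\Omega)^{p-1}$ with $c>0$. So your claim $f\in C^{0,\min\{\alpha_V,\beta\}}$ and the range $\gamma<\min\{\alpha_V,\alpha_\Omega,\beta\}$ can fail. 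You need $\gamma<\min\{\alpha_\Omega,\alpha_V,\beta,p-1\}$, which is why the paper includes $\vartheta_p=\min\{1,p-1\}$.
\item Step~3 works on $\mathcal N_{2\delta_0}$, so you should choose $\delta_0$ such that $|\nabla u|\geq\eta$ holds on $\mathcal N_{2\delta_0}$, not just on $\mathcal N_{\delta_0}$. This is the role of the paper's choice $\delta_0<\delta_1<\delta_*$.
\end{itemize}
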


\begin{proof}
By the Hopf boundary point lemma; see
\cite[Lemma~4]{otani-teshima} or
\cite[Theorem~2.2]{esposito-farina-montoro-sciunzi}, we obtain
\[
-\frac{\partial u}{\partial\nu}(y)>0
\qquad
\text{for every }y\in\partial\Omega.
\]
The map \(y\mapsto-\partial_\nu u(y)\) is continuous on the compact set
\(\partial\Omega\); hence
\[
\eta_0
:=
\min_{y\in\partial\Omega}
\left(
-\frac{\partial u}{\partial\nu}(y)
\right)>0.
\]
In particular, \(|\nabla u|\geq\eta_0\) on \(\partial\Omega\).
The continuity of \(\nabla u\) and the compactness of
\(\partial\Omega\) therefore yield constants \(\delta_*>0\) and
\(\eta_*>0\) such that
\[
|\nabla u(x)|\geq\eta_*
\qquad\text{for every }x\in\mathcal N_{\delta_*}.
\]

Choose \(0<\delta_0<\delta_1<\delta_*\) and set
\(\mathcal G_1:=\mathcal N_{\delta_1}\cap\Omega\). Since
\(\mathcal G_1\subset\Omega\setminus\mathcal Z_u\),
Proposition~\ref{prop:first-eigenfunction-regularity}, applied locally,
gives \(u\in C^2(\mathcal G_1)\), and the eigenvalue equation holds
pointwise there. Thus
\[
-a_{ij}(x)u_{ij}=f(x)
\qquad\text{in }\mathcal G_1,
\]
where
\[
a_{ij}
:=
\delta_{ij}
+
(p-2)\frac{u_i u_j}{|\nabla u|^2},
\qquad
f
:=
(\lambda_1-V)u^{p-1}|\nabla u|^{2-p}.
\]
Writing \(e:=\nabla u/|\nabla u|\), we have
\(a_{ij}=\delta_{ij}+(p-2)e_i e_j\), and hence
\[
\min\{1,p-1\}|\xi|^2
\leq
a_{ij}(x)\xi_i\xi_j
\leq
\max\{1,p-1\}|\xi|^2
\]
for every \(x\in\mathcal G_1\) and \(\xi\in\mathbb R^N\). Thus
\((a_{ij})\) is uniformly elliptic.

Since \(u\in C^{1,\beta}(\overline\Omega)\) and
\(|\nabla u|\geq\eta_*>0\) on \(\overline{\mathcal G_1}\), we have
\[
\frac{\nabla u}{|\nabla u|},\ |\nabla u|^{2-p}
\in C^{0,\beta}(\overline{\mathcal G_1}),
\qquad
a_{ij}\in C^{0,\beta}(\overline{\mathcal G_1}).
\]
Moreover, with \(\vartheta_p:=\min\{1,p-1\}\), $u^{p-1}\in C^{0,\vartheta_p}(\overline\Omega).$
Since \(V\in C^{0,\alpha_V}(\overline\Omega)\), it follows that
\[
f\in C^{0,\sigma}(\overline{\mathcal G_1}),
\qquad
\sigma:=\min\{\alpha_V,\beta,\vartheta_p\}.
\]
Choose $0<\gamma<\min\{\alpha_\Omega,\alpha_V,\beta,\vartheta_p\}.$
Then \(a_{ij},f\in C^{0,\gamma}(\overline{\mathcal G_1})\) and
\(\partial\Omega\in C^{2,\gamma}\). The local
interior and boundary regularity results
\cite[Lemmas~6.16 and Lemma~6.18]{gilbarg-trudinger}, together with a finite
covering argument, yield $u\in C^{2,\gamma}
\left(
\overline{\mathcal N_{\delta_0}\cap\Omega}
\right).$
\end{proof}

\subsection{Regularity and convergence of the regularized eigenfunctions}

For \(\varepsilon>0\), define the regularized energy
\[
\mathcal E_{\varepsilon,V}(w)
:=
\int_\Omega
\bigl(\varepsilon w^2+|\nabla w|^2\bigr)^{p/2}\,dx
+
\int_\Omega V(x)|w|^p\,dx
\]
and the associated first variational eigenvalue
\[
\lambda_\varepsilon
:=
\inf_{w\in\mathcal K}\mathcal E_{\varepsilon,V}(w),
\qquad
\mathcal K=
\left\{
w\in W_0^{1,p}(\Omega):
\|w\|_{L^p(\Omega)}=1
\right\}.
\]

We first construct the positive first eigenpair for the regularized
energy and establish its convergence to the original first eigenpair as
\(\varepsilon\downarrow0\).

\begin{proposition}
\label{prop:regularized-first-eigenpair}
Let \(u\in\mathcal K\) be the positive first eigenfunction of the problem \eqref{eq:weighted-p-laplacian-eigenvalue}. For every \(\varepsilon>0\), there exists a nonnegative minimizer
\(u_\varepsilon\in\mathcal K\) such that $\mathcal E_{\varepsilon,V}(u_\varepsilon)=\lambda_\varepsilon,$ and \(u_\varepsilon\) satisfies
\begin{equation}\label{eq:regularized-eigenvalue-weak}
\begin{aligned}
&\int_\Omega
\bigl(\varepsilon u_\varepsilon^2+|\nabla u_\varepsilon|^2\bigr)^{\frac{p-2}{2}}
\bigl(
\nabla u_\varepsilon\cdot\nabla\varphi
+\varepsilon u_\varepsilon\varphi
\bigr)\,dx
+\int_\Omega Vu_\varepsilon^{p-1}\varphi\,dx
=
\lambda_\varepsilon
\int_\Omega u_\varepsilon^{p-1}\varphi\,dx
\end{aligned}
\end{equation}
for every \(\varphi\in W_0^{1,p}(\Omega)\). Furthermore, we obtain $\lambda_{1,p}(\Omega,V) \leq \lambda_\varepsilon$,
and as $\varepsilon \downarrow 0$,
\[
\lambda_\varepsilon \rightarrow \lambda_{1,p}(\Omega,V) 
\quad \text{and} \quad 
u_\varepsilon \rightarrow u \quad \text{strongly in } W_0^{1,p}(\Omega).
\]
\end{proposition}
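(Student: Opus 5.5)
Existence follows from the direct method, the Euler--Lagrange equation from differentiating the energy, and convergence from comparing the regularized and original energies.

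\emph{Existence.} Fix \(\varepsilon>0\) and take a minimizing sequence \((w_k)\subset\mathcal K\) for \(\mathcal E_{\varepsilon,V}\). Since \(\mathcal E_{\varepsilon,V}(|w|)=\mathcal E_{\varepsilon,V}(w)\), we may assume \(w_k\geq0\). Because \((\varepsilon w^2+|\nabla w|^2)^{p/2}\geq|\nabla w|^p\) and \(V\geq0\), the sequence is bounded in \(W_0^{1,p}(\Omega)\). Extract a subsequence with \(w_k\rightharpoonup u_\varepsilon\) weakly in \(W_0^{1,p}\) and strongly in \(L^p\) (Rellich), so \(u_\varepsilon\in\mathcal K\) and \(u_\varepsilon\geq0\). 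The integrand \((s,\xi)\mapsto(\varepsilon s^2+|\xi|^2)^{p/2}\) is convex in \(\xi\); indeed it is jointly convex, being the \(p\)-th power (with \(p\geq1\)) of the Euclidean norm of \((\sqrt\varepsilon s,\xi)\). Hence standard weak lower semicontinuity applies, while the potential term converges by strong \(L^p\) convergence and boundedness of \(V\). Therefore \(u_\varepsilon\) is a minimizer.

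\emph{Euler--Lagrange equation.} The functional \(\mathcal E_{\varepsilon,V}\) is \(C^1\) on \(W_0^{1,p}\), because its integrand is \(C^1\) with derivative bounded by \(C(|s|+|\xi|)^{p-1}\). The constraint \(\|w\|_p^p=1\) is also \(C^1\) and has nonvanishing derivative on \(\mathcal K\). A Lagrange multiplier argument therefore gives the weak equation with some multiplier \(\kappa\). Testing with \(\varphi=u_\varepsilon\) and using \(p\)-homogeneity of the energy yields \(\kappa=\mathcal E_{\varepsilon,V}(u_\varepsilon)=\lambda_\varepsilon\), which is \eqref{eq:regularized-eigenvalue-weak}. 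The pointwise derivative of the integrand is \(p(\varepsilon s^2+|\xi|^2)^{(p-2)/2}(\varepsilon s,\xi)\), and the common factor \(p\) cancels.

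\emph{Convergence.} Since \(\mathcal E_{\varepsilon,V}\geq\mathcal E_V\) pointwise on \(\mathcal K\), we get \(\lambda_{1,p}(\Omega,V)\leq\lambda_\varepsilon\). Testing \(\lambda_\varepsilon\) with \(u\) and applying dominated convergence, with dominating function \((u^2+|\nabla u|^2)^{p/2}\) for \(\varepsilon\leq1\), gives \(\limsup_{\varepsilon\to0}\lambda_\varepsilon\leq\mathcal E_V(u)=\lambda_{1,p}\). Thus \(\lambda_\varepsilon\to\lambda_{1,p}\). Consequently \((u_\varepsilon)\) is bounded in \(W_0^{1,p}\). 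Any subsequence has a further subsequence with \(u_{\varepsilon}\rightharpoonup v\) weakly in \(W_0^{1,p}\) and \(u_\varepsilon\to v\) strongly in \(L^p\), so \(v\in\mathcal K\) and \(v\geq0\). Lower semicontinuity gives
\[
\mathcal E_V(v)\leq\liminf\mathcal E_V(u_\varepsilon)\leq\liminf\lambda_\varepsilon=\lambda_{1,p},
\]
so \(v\) is a nonnegative first eigenfunction. By simplicity and the normalization, \(v=u\). It follows that the whole family converges.

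\emph{Strong convergence in \(W_0^{1,p}\).} This is the step needing care. From
\[
\int|\nabla u_\varepsilon|^p\leq\lambda_\varepsilon-\int V u_\varepsilon^p\to\lambda_{1,p}-\int Vu^p=\int|\nabla u|^p
\]
and weak lower semicontinuity, we obtain \(\|\nabla u_\varepsilon\|_p\to\|\nabla u\|_p\). Uniform convexity of \(L^p\), for \(1<p<\infty\), together with weak convergence of \(\nabla u_\varepsilon\) to \(\nabla u\) in \(L^p\), then gives strong convergence of the gradients. The potential term converges because \(V\in C(\overline\Omega)\) and \(u_\varepsilon\to u\) in \(L^p\).
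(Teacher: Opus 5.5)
Your proposal is correct and follows essentially the same route as the paper: the direct method with the jointly convex integrand, the Lagrange multiplier identified by testing with $u_\varepsilon$, the energy comparison $\mathcal E_V\le\mathcal E_{\varepsilon,V}$ together with dominated convergence to get $\lambda_\varepsilon\to\lambda_{1,p}(\Omega,V)$, and simplicity to identify the limit. Strong convergence in $W_0^{1,p}(\Omega)$ then follows, as in the paper, from convergence of the gradient norms and uniform convexity of $L^p$.
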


\begin{proof}
The existence of a minimizer follows from the direct method of the
calculus of variations, using the convexity of $(s,\xi)\longmapsto
\bigl(\varepsilon s^2+|\xi|^2\bigr)^{p/2}$
and the compact embedding
\(W_0^{1,p}(\Omega)\hookrightarrow L^p(\Omega)\). Since
\(\mathcal E_{\varepsilon,V}(|w|)
=\mathcal E_{\varepsilon,V}(w)\), the minimizer may be chosen
nonnegative. The Lagrange multiplier theorem then gives
\eqref{eq:regularized-eigenvalue-weak} with a multiplier
\(\mu_\varepsilon\); testing with \(u_\varepsilon\) and using
\(\|u_\varepsilon\|_{L^p(\Omega)}=1\) yields $\mu_\varepsilon
=
\mathcal E_{\varepsilon,V}(u_\varepsilon)
=
\lambda_\varepsilon.$

Since $\mathcal E_{\varepsilon,V}(w)\geq\mathcal E_V(w)$ for every $w\in\mathcal K,$
we have \(\lambda_\varepsilon\geq\lambda_{1,p}(\Omega,V)\). On the other hand,
\[
\lambda_\varepsilon
\leq
\mathcal E_{\varepsilon,V}(u)
\rightarrow
\mathcal E_V(u)
=
\lambda_{1,p}(\Omega,V)
\]
by dominated convergence. Hence $\lambda_\varepsilon\rightarrow\lambda_{1,p}(\Omega,V).$
Moreover,
\[
\lambda_{1,p}(\Omega,V)
\leq
\mathcal E_V(u_\varepsilon)
\leq
\mathcal E_{\varepsilon,V}(u_\varepsilon)
=
\lambda_\varepsilon,
\]
so that $\mathcal E_V(u_\varepsilon)\rightarrow\lambda_{1,p}(\Omega,V),$ thus,
the family \(\{u_\varepsilon\}_{0<\varepsilon\leq1}\) is bounded in
\(W_0^{1,p}(\Omega)\). Given any sequence
\(\varepsilon_j\downarrow0\), after passing to a subsequence,
\[
u_{\varepsilon_j}\rightharpoonup u_*
\quad\text{in }W_0^{1,p}(\Omega),
\qquad
u_{\varepsilon_j}\to u_*
\quad\text{in }L^p(\Omega).
\]
Thus \(u_*\in\mathcal K\) and \(u_*\geq0\). By weak lower
semicontinuity, $\mathcal E_V(u_*)\le\lambda_{1,p}(\Omega,V).$
Hence \(u_*\) is a nonnegative normalized first eigenfunction, and the
simplicity of \(\lambda_1\) implies \(u_*=u\).

Finally,
\[
\int_\Omega|\nabla u_{\varepsilon_j}|^p\,dx
=
\mathcal E_V(u_{\varepsilon_j})
-
\int_\Omega Vu_{\varepsilon_j}^p\,dx
\longrightarrow
\int_\Omega|\nabla u|^p\,dx.
\]
Together with the weak convergence of the gradients and the uniform
convexity of \(L^p(\Omega;\mathbb R^N)\), this gives $u_{\varepsilon_j}\rightarrow u$ strongly in  $W_0^{1,p}(\Omega).$
Since the limit is unique, the convergence holds for the whole family.
\end{proof}

Below, we use the lifting
\[
U_\varepsilon(x,t)
:=
e^{\sqrt{\varepsilon}\,t}u_\varepsilon(x)
\]
to prove the boundary nondegeneracy and
\(C^{1,\theta_\varepsilon}\)-regularity of \(u_\varepsilon\). These
properties remove the degeneracy of the original equation and allow us
to further obtain the global \(C^{2,\beta_\varepsilon}\)-regularity.

\begin{proposition}
\label{prop:regularized-eigenfunction-positivity-regularity}
Let \(u_\varepsilon\in\mathcal K\) be the nonnegative minimizer given by
Proposition~\ref{prop:regularized-first-eigenpair}, and let \(\nu\)
denote the outward unit normal to \(\partial\Omega\). Then
\[
u_\varepsilon>0 \quad\text{in }\Omega,
\qquad
-\frac{\partial u_\varepsilon}{\partial\nu}>0
\quad\text{on }\partial\Omega.
\]
Moreover, there exists \(\beta_\varepsilon\in(0,1)\) such that $u_\varepsilon\in C^{2,\beta_\varepsilon}(\overline\Omega).$
\end{proposition}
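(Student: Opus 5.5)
The key device is the lifting \(U_\varepsilon(x,t):=e^{\sqrt\varepsilon\,t}u_\varepsilon(x)\) on the cylinder \(\Omega\times\mathbb R\). Since \(\partial_tU_\varepsilon=\sqrt\varepsilon\,U_\varepsilon\), we have
\[
|\nabla_{x,t}U_\varepsilon|^2=e^{2\sqrt\varepsilon t}\bigl(\varepsilon u_\varepsilon^2+|\nabla u_\varepsilon|^2\bigr).
\]
The plan is to show first that \(U_\varepsilon\) is a weak solution of
\[
-\Delta_p^{(x,t)}U_\varepsilon+V(x)U_\varepsilon^{p-1}=\lambda_\varepsilon U_\varepsilon^{p-1}
\qquad\text{in }\Omega\times\mathbb R .
\]
To see this, I test \eqref{eq:regularized-eigenvalue-weak} with \(\varphi(x)=\int_{\mathbb R}\psi(x,t)e^{(p-1)\sqrt\varepsilon t}\,dt\) for \(\psi\in C_c^\infty(\Omega\times\mathbb R)\). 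I then integrate by parts in \(t\) in the term \(\int|\nabla U|^{p-2}\partial_tU\,\partial_t\psi\). The factor \(e^{(p-1)\sqrt\varepsilon t}\) comes from \(|\nabla U|^{p-2}\partial_tU\), and the \(\varepsilon u_\varepsilon\varphi\) term in \eqref{eq:regularized-eigenvalue-weak} arises exactly from that \(t\)-integration by parts. This is a routine bookkeeping computation.

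Once the lifted equation is available, the whole degenerate regularity theory applies to \(U_\varepsilon\) on \((N+1)\)-dimensional sets \(\Omega\times(-T,T)\), whose lateral boundary is \(C^{2,\alpha_\Omega}\) away from the ends. The steps are as follows.

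First, \(L^\infty\) bounds follow from Moser iteration, and local \(C^{1,\theta_\varepsilon}\) regularity up to the lateral boundary follows from Lieberman's theorem \cite[Theorem~1]{lieberman-boundary-regularity}. Hence \(u_\varepsilon\in C^{1,\theta_\varepsilon}(\overline\Omega)\).

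Second, \(U_\varepsilon\ge0\) is a nontrivial supersolution of \(-\Delta_pU+VU^{p-1}\ge0\), since \(\lambda_\varepsilon U^{p-1}\ge0\) and \(V\ge0\) after the normalization made above. Vázquez's strong maximum principle therefore gives \(U_\varepsilon>0\), so \(u_\varepsilon>0\) in \(\Omega\).

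Third, the Hopf lemma for the \(p\)-Laplacian with a zero-order term, applied as in \cite[Theorem~2.2]{esposito-farina-montoro-sciunzi} on the smooth lateral boundary, gives \(\partial_{\nu}U_\varepsilon<0\) there. Since \(\nu\) has no \(t\)-component, this yields \(-\partial_\nu u_\varepsilon>0\) on \(\partial\Omega\).

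For the \(C^{2,\beta_\varepsilon}\) regularity, the crucial point is that the lift removes the degeneracy everywhere, not only near the boundary. Because \(u_\varepsilon>0\) in \(\Omega\), we have
\[
\varepsilon u_\varepsilon^2+|\nabla u_\varepsilon|^2\ge\varepsilon u_\varepsilon^2>0
\]
in the interior. Near \(\partial\Omega\), the Hopf bound gives \(|\nabla u_\varepsilon|\ge c>0\) in a collar by continuity. By compactness of \(\overline\Omega\),
\[
\varepsilon u_\varepsilon^2+|\nabla u_\varepsilon|^2\ge c_\varepsilon>0 \qquad\text{on }\overline\Omega .
\]
Hence \(|\nabla U_\varepsilon|\) is bounded below on \(\overline\Omega\times[-1,1]\). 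I then repeat the argument of Propositions~\ref{prop:first-eigenfunction-regularity} and~\ref{prop:first-eigenfunction-boundary-regularity} verbatim for \(U_\varepsilon\). Dividing by \(|\nabla U_\varepsilon|^{p-2}\) gives a linear uniformly elliptic nondivergence equation with \(C^{0,\theta_\varepsilon}\) coefficients \(\delta_{ij}+(p-2)\,\partial_iU\,\partial_jU/|\nabla U|^2\) and a Hölder right-hand side \((\lambda_\varepsilon-V)U^{p-1}|\nabla U|^{2-p}\). The viscosity/Schauder theory \cite[Theorem~8.1]{caffarelli-cabre}, together with the boundary Schauder estimates \cite[Lemmas~6.16 and 6.18]{gilbarg-trudinger} on the lateral boundary, gives \(U_\varepsilon\in C^{2,\beta_\varepsilon}\) up to the lateral boundary. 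Restricting to \(t=0\) yields \(u_\varepsilon\in C^{2,\beta_\varepsilon}(\overline\Omega)\) with \(\beta_\varepsilon<\min\{\alpha_\Omega,\alpha_V,\theta_\varepsilon,\min\{1,p-1\}\}\).

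The main obstacle I expect is the global lower bound on \(|\nabla U_\varepsilon|\) together with the justification of the lifted weak formulation. The weak form must be checked carefully, because the test functions \(\varphi\) built from \(\psi\) must lie in \(W_0^{1,p}(\Omega)\) and the \(t\)-integration must produce precisely the \(\varepsilon u_\varepsilon\varphi\) term. The Hölder continuity of \(u_\varepsilon^{p-1}\) for \(1<p<2\) also needs separate attention; it is handled by positivity in the interior and by the Hopf bound near the boundary.
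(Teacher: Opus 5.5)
\textbf{Your lifted equation is false, so the step you call ``routine bookkeeping'' fails.} Set $S:=(\varepsilon u_\varepsilon^2+|\nabla u_\varepsilon|^2)^{1/2}$ and use your test function $\varphi=\int_{\mathbb R}\psi\,e^{(p-1)\sqrt\varepsilon t}\,dt$. The $t$-integration by parts gives
\[
\iint |\nabla U_\varepsilon|^{p-2}\partial_tU_\varepsilon\,\partial_t\psi\,dx\,dt
=-(p-1)\varepsilon\int_\Omega S^{p-2}u_\varepsilon\varphi\,dx .
\]
This has the wrong sign and the wrong coefficient to cancel the term $+\varepsilon\int_\Omega S^{p-2}u_\varepsilon\varphi$ in \eqref{eq:regularized-eigenvalue-weak}. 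A residual $-p\varepsilon\int_\Omega S^{p-2}u_\varepsilon\varphi\,dx$ survives. What $U_\varepsilon$ actually solves is
\[
-\Delta_pU_\varepsilon+p\sqrt\varepsilon\,|\nabla U_\varepsilon|^{p-2}\partial_tU_\varepsilon+VU_\varepsilon^{p-1}=\lambda_\varepsilon U_\varepsilon^{p-1}.
\]
Equivalently,
\[
\operatorname{div}_{x,t}\bigl(\rho|\nabla U_\varepsilon|^{p-2}\nabla U_\varepsilon\bigr)+\rho(\lambda_\varepsilon-V)U_\varepsilon^{p-1}=0,
\qquad \rho(t)=e^{-p\sqrt\varepsilon t},
\]
which is the weighted form the paper uses. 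Already for $p=2$ one gets $-\Delta U_\varepsilon+VU_\varepsilon=(\lambda_\varepsilon-2\varepsilon)U_\varepsilon$. Replacing $\sqrt\varepsilon$ by $-\sqrt\varepsilon$ does not help, so no unweighted exponential lift works.

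\textbf{Which steps break and which survive.}
\begin{itemize}
\item Lieberman's $C^{1,\theta}$ theorem still applies, since its structure conditions allow the weight $\rho$, or a drift of order $|\nabla U|^{p-1}$.
\item Your Schauder step on the cylinder also survives. After dividing by $|\nabla U_\varepsilon|^{p-2}$, the extra term is just $p\sqrt\varepsilon\,\partial_tU_\varepsilon\in C^{0,\theta_\varepsilon}$.
\item Both maximum-principle steps are invoked for an equation $U_\varepsilon$ does not satisfy. V\'azquez's strong maximum principle allows no gradient terms. For $p>2$ you cannot simply absorb the drift as a zero-order term: since $\partial_tU_\varepsilon=\sqrt\varepsilon\,U_\varepsilon$, the drift equals $p\varepsilon|\nabla U_\varepsilon|^{p-2}U_\varepsilon$, which is only $O(U_\varepsilon)$. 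Then $\beta(s)\sim s$ violates $\int_{0^+}(s\beta(s))^{-1/p}\,ds=\infty$ when $p>2$.
\item Likewise, the Hopf lemma you cite is for the unweighted $p$-Laplacian with a zero-order term only.
\end{itemize}

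\textbf{How to repair.} Obtain positivity as the paper does, from Moser iteration plus Trudinger's Harnack inequality applied to the original regularized equation in $\Omega$, or from a Pucci--Serrin type strong maximum principle that tolerates the weight or drift. Obtain the Hopf bound from the Pucci--Serrin boundary point lemma for $\operatorname{div}(\rho|\nabla U|^{p-2}\nabla U)$. For $p\neq2$ this forces a short cylinder: the paper takes $T$ small so that $\sqrt{\max\rho/\min\rho}=e^{p\sqrt\varepsilon T}$ meets that theorem's ellipticity-ratio condition. With these replacements, the rest of your outline is a valid alternative to the paper's route: the global lower bound on $S$, linearization, Schauder up to the lateral boundary, and restriction to $t=0$. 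The paper instead solves the $N$-dimensional linear Dirichlet problem and identifies its solution with $u_\varepsilon$ by viscosity comparison.
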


\begin{proof}
For simplicity, write $u:=u_\varepsilon,\,\lambda:=\lambda_\varepsilon,\,S:=S_\varepsilon(u):=
\bigl(\varepsilon u^2+|\nabla u|^2\bigr)^{1/2}.$
Then
\begin{equation}\label{eq:regularized-equation-regularity}
-\operatorname{div}\bigl(S^{p-2}\nabla u\bigr)
+\varepsilon S^{p-2}u
=
(\lambda-V)u^{p-1}
\end{equation}
weakly in \(\Omega\).

We first prove \(u\in L^\infty(\Omega)\). Suppose \(1<p\leq N\), and
choose \(\chi>1\) such that
\[
\|z\|_{L^{p\chi}(\Omega)}
\leq C_S\|\nabla z\|_{L^p(\Omega)}
\qquad
\text{for every }z\in W_0^{1,p}(\Omega);
\]
one may take \(\chi=N/(N-p)\) if \(p<N\), while any fixed
\(\chi>1\) is admissible if \(p=N\). For \(r\geq p\), set
\(u_m:=\min\{u,m\}\) and test \eqref{eq:regularized-equation-regularity}
with \(u\,u_m^{r-p}\). Since
\[
\nabla u\cdot\nabla\bigl(u\,u_m^{r-p}\bigr)
+\varepsilon u^2u_m^{r-p}
\geq
u_m^{r-p}\bigl(|\nabla u|^2+\varepsilon u^2\bigr),
\]
we obtain
\[
\int_\Omega u_m^{r-p}|\nabla u|^p\,dx
\leq
Q_\varepsilon\int_\Omega u^r\,dx,
\qquad
Q_\varepsilon:=
1+\|(\lambda_\varepsilon-V)_+\|_{L^\infty(\Omega)}.
\]
Applying the Sobolev inequality to
\(z_m:=u\,u_m^{(r-p)/p}\), for which $|\nabla z_m|
\leq
\frac{r}{p}u_m^{(r-p)/p}|\nabla u|,$
and then letting \(m\to\infty\), gives
\[
\|u\|_{L^{\chi r}(\Omega)}
\leq
\left[
C_S^pQ_\varepsilon
\left(\frac{r}{p}\right)^p
\right]^{1/r}
\|u\|_{L^r(\Omega)}.
\]
Iteration with \(r_j=p\chi^j\) yields \(u\in L^\infty(\Omega)\); see
also \cite[Chapter~IV, Theorem~7.1]
{ladyzhenskaya-uraltseva}. If \(p>N\), testing with \(u\) gives
\[
\int_\Omega|\nabla u|^p\,dx
\leq
|\lambda_\varepsilon|+\|V\|_{L^\infty(\Omega)},
\]
and the Morrey embedding yields the same conclusion.

Set
\[
A_\varepsilon(z,\xi)
:=
\bigl(\varepsilon z^2+|\xi|^2\bigr)^{\frac{p-2}{2}}\xi,
\qquad
B_\varepsilon(x,z,\xi)
:=
(\lambda_\varepsilon-V(x))|z|^{p-2}z
-\varepsilon
\bigl(\varepsilon z^2+|\xi|^2\bigr)^{\frac{p-2}{2}}z.
\]
Then \eqref{eq:regularized-equation-regularity} takes the form
\[
\operatorname{div}A_\varepsilon(u,\nabla u)
+
B_\varepsilon(x,u,\nabla u)=0.
\]
For suitable constants depending only on
\(p,\varepsilon,\lambda_\varepsilon\), and
\(\|V\|_{L^\infty(\Omega)}\), one has
\[
|A_\varepsilon(z,\xi)|
\leq
C_\varepsilon\bigl(|\xi|^{p-1}+|z|^{p-1}\bigr),
\qquad
A_\varepsilon(z,\xi)\cdot\xi
\geq
|\xi|^p-C_\varepsilon|z|^p,
\]
and
\[
|B_\varepsilon(x,z,\xi)|
\leq
C_\varepsilon\bigl(|\xi|^{p-1}+|z|^{p-1}\bigr).
\]
Hence the structural assumptions \emph{(1.2)--(1.3)} of
\cite{trudinger-harnack} are satisfied with exponent \(p\) and
\(a_3=a_4=b_3=0\). Since \(u\in L^\infty(\Omega)\), the Harnack
inequality \cite[Theorem~1.1]{trudinger-harnack} applies. As
\(u\geq0\), \(u\not\equiv0\), and \(\Omega\) is connected, we conclude
that $u>0$ in $\Omega.$

To obtain the global \(C^{1,\theta_\varepsilon}\)-regularity, set
\[
a:=\sqrt{\varepsilon},
\qquad
U(x,t):=e^{at}u(x),
\qquad
\rho(t):=e^{-pat}.
\]
Then
\[
\nabla_{x,t}U=e^{at}(\nabla u,au),
\qquad
|\nabla_{x,t}U|
=
e^{at}\bigl(\varepsilon u^2+|\nabla u|^2\bigr)^{1/2},
\]
and a direct computation in the weak sense gives
\[
\operatorname{div}_{x,t}
\!\left(
\rho(t)|\nabla_{x,t}U|^{p-2}\nabla_{x,t}U
\right)
+
\rho(t)(\lambda-V(x))|U|^{p-2}U
=0
\]
in \(\Omega\times\mathbb R\), with
\(U=0\) on \(\partial\Omega\times\mathbb R\).

Fix \(T>0\). On \(\Omega\times(-T,T)\), the function \(\rho\) is smooth
and bounded above and below by positive constants. Hence
\[
\mathcal A((x,t),z,P):=\rho(t)|P|^{p-2}P,
\qquad
\mathcal B((x,t),z,P)
:=
\rho(t)(\lambda-V(x))|z|^{p-2}z
\]
satisfy the structural assumptions \emph{(0.3a)--(0.3d)} of
\cite[Theorem~1]{lieberman-boundary-regularity} with
\(m=p-2\) and \(\kappa=0\). Applying the corresponding local interior
and boundary estimates away from the top and bottom faces yields $U\in C^{1,\theta_\varepsilon}$
in a neighborhood of \(\overline\Omega\times\{0\}\), for some
\(\theta_\varepsilon\in(0,1)\). Restricting to \(t=0\), we obtain $u\in C^{1,\theta_\varepsilon}(\overline\Omega).$ Moreover, the exponent and the estimate may be chosen uniformly for
all sufficiently small \(\varepsilon\). Indeed, since
\(\{\lambda_\varepsilon\}\) and
\(\{\|u_\varepsilon\|_{L^\infty(\Omega)}\}\) are uniformly bounded and,
for fixed \(T>0\),
\[
0<c_T\leq \rho_\varepsilon(t)\leq C_T,
\qquad
|\rho_\varepsilon'(t)|\leq C_T
\qquad |t|\leq T,
\]
with constants independent of sufficiently small \(\varepsilon\), all
the structural constants in
\cite[Theorem~1]{lieberman-boundary-regularity} can be chosen uniformly
in \(\varepsilon\). Consequently, there exist
\(\theta\in(0,1)\) and \(C>0\), independent of sufficiently small
\(\varepsilon\), such that
\[
\|U_\varepsilon\|_{C^{1,\theta}
(\overline\Omega\times[-T/2,T/2])}
\leq C.
\]
Restricting to \(t=0\), we obtain $\|u_\varepsilon\|_{C^{1,\theta}(\overline\Omega)}
\leq C.$

We next prove the boundary nondegeneracy. Choose $c_0>\|(V-\lambda)_+\|_{L^\infty(\Omega)}.$
The lifted equation gives
\[
\operatorname{div}_{x,t}\!\left(
\rho|\nabla_{x,t}U|^{p-2}\nabla_{x,t}U
\right)
-c_0\rho U^{p-1}
\leq0
\qquad\text{in }\,\mathcal C_T:=\Omega\times(-T,T).
\]
This is of the form considered in
\cite[Theorem~5.5.1]{pucci-serrin-maximum-principle}, with
\[
a_{ij}(X,z)=\rho(t)\delta_{ij},
\qquad
\mathcal A(s)=s^{p-2},
\qquad
B(X,z,\xi)=-c_0\rho(t)z^{p-1}.
\]
Indeed,
\[
\lim_{s\downarrow0}
\frac{s\mathcal A'(s)}{\mathcal A(s)}
=p-2>-1.
\]
If \(p\neq2\), we choose \(T>0\) sufficiently small that
\[
\sqrt{\frac{\max_{[-T,T]}\rho}
{\min_{[-T,T]}\rho}}
=e^{paT}<\phi(p-2),\quad \phi(c):=\frac{2+c+2\sqrt{1+c}}{|c|},c\ne0,
\]
whereas for \(p=2\) no additional condition is required.

Setting $\rho_T:=\max_{[-T,T]}\rho,f_0(s):=c_0\rho_Ts^{p-1},$
we have $B(X,z,\xi)\geq-f_0(z),$
so that condition \emph{(B1)} holds with \(\kappa=0\). Moreover,
\[
H(t)=\frac{p-1}{p}t^p,
\qquad
F_0(s)=\frac{c_0\rho_T}{p}s^p,
\]
and hence $\int_{0^+}\frac{ds}{H^{-1}(F_0(s))}
=+\infty.$
Since \(U>0\) in \(\mathcal C_T\), \(U(y,0)=0\), and
\(\mathcal C_T\) satisfies the interior sphere condition at
\((y,0)\), the boundary point lemma yields $\partial_{(\nu(y),0)}U(y,0)<0.$
Finally,
\[
\partial_{(\nu(y),0)}U(y,0)
=\partial_\nu u(y),
\]
and therefore $-\partial_\nu u(y)>0$ for every $y\in\partial\Omega.$

By the boundary point lemma, $|\nabla u|>0$ on $\partial\Omega.$
Since \(u>0\) in \(\Omega\), it follows that
\[
S_\varepsilon(u)
:=
\bigl(\varepsilon u^2+|\nabla u|^2\bigr)^{1/2}>0
\qquad\text{on }\overline\Omega.
\]
As \(u\in C^1(\overline\Omega)\) and \(\overline\Omega\) is compact,
there exist constants \(c_\varepsilon,C_\varepsilon>0\) such that
\[
c_\varepsilon
\leq S_\varepsilon(u)
\leq C_\varepsilon
\qquad\text{on }\overline\Omega.
\]

Set
\[
a:=\sqrt{\varepsilon},
\qquad
U(x,t):=e^{at}u(x),
\qquad
\rho(t):=e^{-pat}.
\]
For every fixed \(\tau>0\),
\[
|\nabla_{x,t}U(x,t)|
=
e^{at}S_\varepsilon(u(x))
\geq e^{-a\tau}c_\varepsilon>0
\qquad\text{in }
\mathcal Q_\tau:=\Omega\times(-\tau,\tau).
\]
The lifted equation can be written as
\[
-\Delta_pU=H_\varepsilon(X)
\qquad\text{weakly in }\mathcal Q_\tau,
\]
where
\[
H_\varepsilon(X)
:=
(\lambda-V(x))U^{p-1}
+
\nabla_X\log\rho(t)\cdot
|\nabla_XU|^{p-2}\nabla_XU.
\]
Since \(U\in C^{1,\theta_\varepsilon}\) and
\(|\nabla_XU|\) is bounded away from zero, \(H_\varepsilon\) is
continuous. The equivalence between weak and viscosity solutions \cite{julin-juutinen}, applied to the continuous right-hand
side \(H_\varepsilon(X)\), therefore shows that \(U\) is a viscosity
solution of this equation.

Let \(\varphi\in C^2(\Omega)\) touch \(u\) from above or below at
\(x_0\in\Omega\), and define $\Phi(x,t):=e^{at}\varphi(x).$
Then \(\Phi\) touches \(U\) at \((x_0,0)\). Evaluating the viscosity
equation at this point shows that \(u\) is a viscosity solution of
\[
-a_{ij}(x)u_{ij}=f_\varepsilon(x)
\qquad\text{in }\Omega,
\]
where, with
\[
S:=\bigl(\varepsilon u^2+|\nabla u|^2\bigr)^{1/2},\quad
a_{ij}
=
\delta_{ij}
+
(p-2)\frac{u_i u_j}{S^2}
\]
and
\[
f_\varepsilon
=
(\lambda-V)u^{p-1}S^{2-p}
+
(p-2)\varepsilon u\frac{|\nabla u|^2}{S^2}
-\varepsilon u.
\]
Since \(S\geq c_\varepsilon>0\),
\[
\min\{1,p-1\}|\xi|^2
\leq
a_{ij}(x)\xi_i\xi_j
\leq
\max\{1,p-1\}|\xi|^2,
\]
and hence \((a_{ij})\) is uniformly elliptic. Moreover, setting
\[
\vartheta_p:=\min\{1,p-1\},
\qquad
\sigma_\varepsilon
:=
\min\{\alpha_V,\theta_\varepsilon,\vartheta_p\},
\]
we have $a_{ij}\in C^{0,\theta_\varepsilon}(\overline\Omega)$ and $f_\varepsilon\in C^{0,\sigma_\varepsilon}(\overline\Omega).$

Fix $0<\beta_\varepsilon
<
\min\{\alpha_\Omega,\sigma_\varepsilon\}.$
By \cite[Theorem~6.14]{gilbarg-trudinger}, the Dirichlet problem
\[
\begin{cases}
a_{ij}(x)w_{ij}=-f_\varepsilon(x),
&x\in\Omega,\\
w=0,
&x\in\partial\Omega,
\end{cases}
\]
has a unique solution $w\in C^{2,\beta_\varepsilon}(\overline\Omega).$

It remains to identify \(u\) with \(w\). Choose
\(R>\sup_{\overline\Omega}x_1\) and set $\psi(x):=e^R-e^{x_1}>0.$
Writing \(L:=-a_{ij}D_{ij}\), we have $L\psi=a_{11}e^{x_1}>0$ in $\overline\Omega.$
Hence, for every \(\delta>0\),
\[
L(w+\delta\psi)>f_\varepsilon,
\qquad
L(w-\delta\psi)<f_\varepsilon.
\]
Since \(u=w=0\) on \(\partial\Omega\),
\[
w-\delta\psi<u<w+\delta\psi
\qquad\text{on }\partial\Omega.
\]
If \(u-(w+\delta\psi)\) had a positive maximum at some
\(x_0\in\Omega\), then, after adding a constant to
\(w+\delta\psi\), we would obtain a \(C^2\) test function touching
\(u\) from above at \(x_0\). The viscosity subsolution inequality would
give
\[
L(w+\delta\psi)(x_0)\leq f_\varepsilon(x_0),
\]
a contradiction. Thus \(u\leq w+\delta\psi\). The viscosity
supersolution inequality similarly gives $w-\delta\psi\leq u.$
Letting \(\delta\downarrow0\), we conclude that \(u=w\). Therefore $u_\varepsilon\in C^{2,\beta_\varepsilon}(\overline\Omega).$
\end{proof}

As a consequence of the uniform regularity estimates above, we obtain
the following convergence result, which also gives higher-order
convergence away from the critical set of the limiting eigenfunction.

\begin{corollary}
\label{cor:regularized-uniform-convergence}
Let \(u_\varepsilon\in\mathcal K\) and \(u\in\mathcal K\) be as in
Proposition~\ref{prop:regularized-first-eigenpair}. Then there exist
\(\varepsilon_0>0\), \(\beta\in(0,1)\), and \(C>0\), independent of
\(\varepsilon\in(0,\varepsilon_0]\), such that $\|u_\varepsilon\|_{C^{1,\beta}(\overline\Omega)}
\leq C.$
Moreover, for every \(0<\gamma<\beta\),
\[
u_\varepsilon\longrightarrow u
\qquad\text{in }C^{1,\gamma}(\overline\Omega)
\quad\text{as }\varepsilon\downarrow0.
\]

For every compact set $K\Subset\Omega\setminus\mathcal Z_u,$
there exist \(\varepsilon_K>0\), \(\theta_K\in(0,1)\), and
\(C_K,c_K>0\), independent of
\(\varepsilon\in(0,\varepsilon_K]\), such that $|\nabla u_\varepsilon|\geq c_K$ on $K,$
and $\|u_\varepsilon\|_{C^{2,\theta_K}(K)}
\leq C_K.$
Consequently, for every \(0<\gamma<\theta_K\),
\[
u_\varepsilon\longrightarrow u
\qquad\text{in }C^{2,\gamma}(K).
\]
\end{corollary}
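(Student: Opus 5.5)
The uniform \(C^{1,\beta}\) bound is essentially already contained in the proof of Proposition~\ref{prop:regularized-eigenfunction-positivity-regularity}. There we showed that, for fixed \(T>0\), the structural constants of the lifted equation for \(U_\varepsilon=e^{\sqrt\varepsilon t}u_\varepsilon\) are uniform for small \(\varepsilon\). This uses the boundedness of \(\lambda_\varepsilon\) from Proposition~\ref{prop:regularized-first-eigenpair} and a uniform \(L^\infty\) bound. I will make the \(L^\infty\) bound explicit: the Moser iteration constant \(Q_\varepsilon\) depends only on \(\sup_\varepsilon\lambda_\varepsilon\) and \(\|V\|_\infty\), and the starting norm is \(\|u_\varepsilon\|_{L^p}=1\). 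Lieberman's estimate then gives \(\|u_\varepsilon\|_{C^{1,\beta}(\overline\Omega)}\le C\) with \(\beta,C\) independent of \(\varepsilon\le\varepsilon_0\). Arzel\`a--Ascoli and the compact embedding \(C^{1,\beta}\hookrightarrow C^{1,\gamma}\) for \(\gamma<\beta\) make every sequence precompact in \(C^{1,\gamma}(\overline\Omega)\). Since Proposition~\ref{prop:regularized-first-eigenpair} gives \(u_\varepsilon\to u\) in \(W^{1,p}_0\), every limit point equals \(u\). Hence the whole family converges in \(C^{1,\gamma}(\overline\Omega)\).

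For a compact set \(K\Subset\Omega\setminus\mathcal Z_u\), continuity of \(\nabla u\) gives \(|\nabla u|\ge 2c_K>0\) on a compact neighborhood \(K_1\) of \(K\) still contained in \(\Omega\setminus\mathcal Z_u\). By the \(C^1\) convergence just proved, \(|\nabla u_\varepsilon|\ge c_K\) on \(K_1\) for \(\varepsilon\le\varepsilon_K\). On \(K_1\) I will use the nondivergence form from the proof of Proposition~\ref{prop:regularized-eigenfunction-positivity-regularity}, which holds classically since \(u_\varepsilon\in C^{2,\beta_\varepsilon}\):
\[
-a^\varepsilon_{ij}\,\partial_{ij}u_\varepsilon=f_\varepsilon,\qquad
a^\varepsilon_{ij}=\delta_{ij}+(p-2)\frac{\partial_iu_\varepsilon\,\partial_ju_\varepsilon}{S_\varepsilon^2},
\]
with \(S_\varepsilon^2=\varepsilon u_\varepsilon^2+|\nabla u_\varepsilon|^2\ge c_K^2\) on \(K_1\). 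Ellipticity constants are \(\min\{1,p-1\},\max\{1,p-1\}\). The uniform \(C^{1,\beta}\) bound together with \(S_\varepsilon\ge c_K\) bounds \(a^\varepsilon_{ij}\) uniformly in \(C^{0,\beta}(K_1)\), and \(f_\varepsilon\) uniformly in \(C^{0,\theta_K}(K_1)\) with \(\theta_K:=\min\{\alpha_V,\beta,\vartheta_p\}\). Here we also use \(u_\varepsilon\ge \frac12\min_{K_1}u>0\) on \(K_1\) for small \(\varepsilon\), which is why the factor \(u_\varepsilon^{p-1}\) is H\"older with exponent \(\beta\); \(\vartheta_p\) is kept only for safety. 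The interior Schauder estimate \cite[Theorem~6.2]{gilbarg-trudinger} then yields
\[
\|u_\varepsilon\|_{C^{2,\theta_K}(K)}\le C\bigl(\|u_\varepsilon\|_{C^0(K_1)}+\|f_\varepsilon\|_{C^{0,\theta_K}(K_1)}\bigr)\le C_K
\]
uniformly in \(\varepsilon\). Compactness of \(C^{2,\theta_K}(K)\hookrightarrow C^{2,\gamma}(K)\), combined with uniqueness of the limit (already identified as \(u\)), gives \(u_\varepsilon\to u\) in \(C^{2,\gamma}(K)\). To apply Schauder on a compact set, one covers \(K\) by finitely many balls whose doubles lie in \(K_1\).

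The main obstacle is the first step: verifying honestly that Lieberman's constants, including the boundary part with the \(C^{2,\alpha_\Omega}\) boundary, are uniform in \(\varepsilon\). This reduces to uniform bounds on \(\lambda_\varepsilon\), on \(\|u_\varepsilon\|_\infty\), and on \(\rho_\varepsilon,\rho_\varepsilon'\) on \([-T,T]\). The \(\rho_\varepsilon\) bounds hold since \(\rho_\varepsilon(t)=e^{-p\sqrt\varepsilon t}\to1\) smoothly. The remaining steps are routine compactness and Schauder theory.
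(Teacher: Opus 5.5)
Your proposal is correct and follows essentially the same route as the paper. The uniform \(C^{1,\beta}\) bound comes from the \(\varepsilon\)-uniform Lieberman estimate for the lifted function \(U_\varepsilon\), and compactness together with the \(W_0^{1,p}\)-limit identifies \(u\); then the nondivergence form with uniformly H\"older coefficients on a neighborhood of \(K\) in \(\Omega\setminus\mathcal Z_u\), interior Schauder estimates, and compactness give the \(C^{2,\gamma}\) convergence. You are in fact slightly more careful than the written proof: you make the \(\varepsilon\)-independence of the Moser \(L^\infty\) bound explicit, and you secure the gradient lower bound on the larger set \(K_1\) where the Schauder estimate is actually applied.
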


\begin{proof}
The uniform \(C^{1,\beta}\)-estimate follows from the uniform version
of the regularity argument in
Proposition~\ref{prop:regularized-eigenfunction-positivity-regularity}.
Together with $u_\varepsilon\rightarrow u$ strongly in $W_0^{1,p}(\Omega),$
the compact embedding
\[
C^{1,\beta}(\overline\Omega)
\hookrightarrow
C^{1,\gamma}(\overline\Omega),
\qquad 0<\gamma<\beta,
\]
gives $u_\varepsilon\rightarrow u$ in $C^{1,\gamma}(\overline\Omega).$ Now let \(K\Subset\Omega\setminus\mathcal Z_u\). Since $\min_K|\nabla u|>0,$
the \(C^1\)-convergence implies, after decreasing
\(\varepsilon_K\) if necessary, $|\nabla u_\varepsilon|\geq c_K>0$ on $K.$

Choose $K\Subset K'\Subset\Omega\setminus\mathcal Z_u.$
On \(K'\), the equations for \(u_\varepsilon\) are uniformly
nondegenerate and can be written in nondivergence form $-a_\varepsilon^{ij}(x)(u_\varepsilon)_{ij}
=
f_\varepsilon(x).$
The \(C^{1,\gamma}\)-convergence, the boundedness of
\(\{\lambda_\varepsilon\}\), and the uniform lower bound for
\(|\nabla u_\varepsilon|\) give, for some
\(\theta_K\in(0,1)\),
\[
\|a_\varepsilon^{ij}\|_{C^{0,\theta_K}(K')}
+
\|f_\varepsilon\|_{C^{0,\theta_K}(K')}
\leq C_K,
\]
with \(C_K\) independent of sufficiently small \(\varepsilon\).
The interior Schauder estimate
\cite[Theorem~6.2]{gilbarg-trudinger} therefore yields $\|u_\varepsilon\|_{C^{2,\theta_K}(K)}
\leq C_K.$
Hence, for every \(0<\gamma<\theta_K\), compactness of
\[
C^{2,\theta_K}(K)\hookrightarrow C^{2,\gamma}(K)
\]
shows that every sequence \(\varepsilon_j\downarrow0\) admits a
subsequence converging in \(C^{2,\gamma}(K)\). Its limit must be \(u\)
by \(C^1\)-convergence. Thus the whole family
converges: $u_\varepsilon\rightarrow u$ in $C^{2,\gamma}(K).$
\end{proof}

\subsection{Boundary concavity and its stability under regularization}

We first record the following boundary concavity result, adapted from
\cite[Lemmas~2.1 and~2.4]{korevaar-convex-solutions}; notably, the
boundary regularity assumption
\(\partial\Omega\in C^{2,\alpha_\Omega}\) is sufficient for the argument.

\begin{proposition}
\label{prop:korevaar-boundary-geometry}
Let \(\Omega\subset\mathbb R^N\) be a bounded strongly convex domain
with \(\partial\Omega\in C^{2,\alpha_\Omega}\) for some
\(\alpha_\Omega\in(0,1)\), and let \(\mathcal N\) be a neighborhood of
\(\partial\Omega\). Suppose that
\[
u\in C^1(\overline\Omega)\cap C^2(\mathcal N\cap\Omega),
\qquad
u>0\quad\text{in }\Omega,
\qquad
u=0\quad\text{on }\partial\Omega,
\]
and $\frac{\partial u}{\partial n}>0$ on $\partial\Omega,$
where \(n\) denotes the inward unit normal. Let
\(\varphi\in C^2((0,\infty))\) satisfy
\[
\lim_{t\to0^+}\varphi'(t)=+\infty,
\qquad
\varphi''(t)<0<\varphi'(t)
\quad\text{for all sufficiently small }t>0,
\]
and
\[
\lim_{t\to0^+}\frac{\varphi(t)}{\varphi'(t)}
=
\lim_{t\to0^+}\frac{\varphi'(t)}{\varphi''(t)}
=
0.
\]
Set \(w:=\varphi(u)\) and
\[ \Omega_\delta
:=
\bigl\{x\in\Omega:
\operatorname{dist}(x,\partial\Omega)>\delta
\bigr\}.\]
Then there exists \(\delta_*>0\) such that $-D^2w>0$ in $\Omega\setminus\overline{\Omega_{\delta_*}}.$
Moreover, for every \(\delta\in(0,\delta_*)\) and every
\(y\in\partial\Omega_\delta\),
\[
w(z)
<
w(y)+\nabla w(y)\cdot(z-y)
\qquad
\text{for every }
z\in\overline{\Omega_\delta}\setminus\{y\}.
\]
\end{proposition}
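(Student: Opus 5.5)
Our plan follows Korevaar's argument and works in a collar of \(\partial\Omega\). There we diagonalize \(D^2w\) with respect to the normal and tangential directions.

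\textbf{Step 1 (Hessian near the boundary).} Fix \(x\) close to \(\partial\Omega\). Let \(y\in\partial\Omega\) be its nearest point, \(n=n(y)\) the inward normal, and \(e\perp n\) a unit tangent vector. By the chain rule,
\[
D^2w=\varphi'(u)\,D^2u+\varphi''(u)\,\nabla u\otimes\nabla u .
\]
Since \(\partial_n u\geq c_0>0\) on \(\partial\Omega\), continuity of \(\nabla u\) gives \(\nabla u\cdot n\geq c_0/2\) in a thin collar \(\mathcal N'\).

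\emph{Normal direction.} We have
\[
D^2w[n,n]=\varphi'(u)\,u_{nn}+\varphi''(u)\,u_n^2 .
\]
Here \(u_{nn}\) is bounded, since \(u\in C^2\) up to the boundary on the collar; if \(C^2(\mathcal N\cap\Omega)\) only means interior regularity, we first shrink to a collar where \(D^2u\) is bounded, which holds in our applications. Moreover \(\varphi''(u)u_n^2\leq \tfrac{c_0^2}{4}\varphi''(u)\). Because \(\varphi'/\varphi''\to0\), we get \(|\varphi''|\gg\varphi'\) as \(u\to0\), so this quadratic form is strictly negative near \(\partial\Omega\).

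\emph{Tangential directions.} Along \(\partial\Omega\) we have \(u=0\), so \(D^2u[e,e]=-u_n\,\kappa(e)\) at boundary points, where \(\kappa\) is the normal curvature. This is negative by strong convexity: \(\kappa\geq\kappa_0>0\). By continuity, \(D^2u[e,e]\leq -c_0\kappa_0/4\) nearby. Using an \(\nabla u\)-adapted frame instead (the direction \(\nabla u/|\nabla u|\) and its orthogonal complement), the tangential block is \(\varphi'(u)D^2u|_{\nabla u^\perp}\). The level sets of \(u\) near \(\partial\Omega\) are \(C^2\) perturbations of \(\partial\Omega\) and hence uniformly convex, so this block is \(\leq -c\varphi'(u)\).

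\emph{Mixed terms.} The mixed entries are \(\varphi'(u)\,D^2u[\nu,e]\), of size \(O(\varphi')\). Negative definiteness then follows from the Schur complement criterion: we need
\[
\bigl(\varphi'\,|D^2u|\bigr)^2 < \bigl(c\varphi'\bigr)\bigl(|\varphi''|\,|\nabla u|^2-C\varphi'\bigr),
\]
and this holds because \(|\varphi''|/\varphi'\to\infty\). This gives \(\delta_*\) with \(-D^2w>0\) on \(\Omega\setminus\overline{\Omega_{\delta_*}}\).

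\textbf{Step 2 (supporting hyperplanes).} Let \(y\in\partial\Omega_\delta\) and define the affine function \(L(z):=w(y)+\nabla w(y)\cdot(z-y)\). We split \(z\in\overline{\Omega_\delta}\) into two regions.

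\emph{Near region.} For \(z\) in the collar \(\Omega_\delta\setminus\Omega_{\delta_*}\) and close to \(y\), the segment \([y,z]\) stays in the convex set \(\overline{\Omega_\delta}\). Strict concavity along the parts of the segment lying in the collar gives \(w(z)<L(z)\) there. This only works on segments contained in the collar, so the next region needs a separate argument.

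\emph{Far region.} Here we use the quantitative limits. We have \(\nabla w(y)=\varphi'(u(y))\nabla u(y)\), with \(\nabla u(y)\approx c\,n\) pointing inward, and \(\varphi'(u(y))\to\infty\) as \(\delta\to0\). Thus \(L\) is a very steep affine function. Since \(\Omega_\delta\) is convex and \(\partial\Omega_\delta\) is close to \(\partial\Omega\) in \(C^2\), we have \(n(y)\cdot(z-y)\geq c|z-y|^2\) for \(z\in\overline{\Omega_\delta}\). Hence
\[
L(z)\geq \varphi(u(y))+c\,\varphi'(u(y))\,|z-y|^2 .
\]
Meanwhile \(w(z)\leq\varphi(\max u)\) is bounded. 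The condition \(\varphi/\varphi'\to0\) makes \(\varphi(u(y))/\varphi'(u(y))\to0\), so the term \(\varphi(u(y))\) does not destroy the estimate even if \(\varphi\to-\infty\), as for \(\varphi=\log\). We therefore obtain \(w(z)<L(z)\) whenever \(|z-y|\geq r\), after shrinking \(\delta_*\) depending on \(r\).

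\emph{Small distances, away from the boundary.} For \(|z-y|<r\) with \(z\) deeper than \(\delta_*\), choose \(r<\delta_*/2\). Then every \(z\in\overline{\Omega_\delta}\) with \(|z-y|<r\) lies in \(\Omega\setminus\Omega_{\delta+r}\subset\) the collar, so the segment argument of the near region applies.

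\textbf{Main obstacle.} The delicate point is uniformity in Step 2. The radius \(r\) must be chosen so that \(\delta+r<\delta_*\) while the far-field estimate still dominates for all small \(\delta\). We handle this by first fixing \(r=\delta_*/4\) from Step 1 and only then sending \(\delta\) to \(0\) to obtain the far-field bound. Any \(\delta\in(0,\delta_*)\) not covered by this far-field regime is treated by redefining \(\delta_*\) to be the smaller threshold.
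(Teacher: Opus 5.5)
Your argument is correct and is essentially the Korevaar argument (his Lemmas 2.1 and 2.4), which the paper cites instead of giving its own proof. That argument has two parts. First, a $\nabla u$-adapted block decomposition of $D^2w$ with a Schur-complement check gives strict concavity in a collar. Second, a near/far splitting of $\overline{\Omega_\delta}$: segments of length less than $r$ stay in the collar, while for $|z-y|\ge r$ the facts $\varphi'(u)\to\infty$ and $\varphi/\varphi'\to0$ make the tangent planes at $\partial\Omega_\delta$ steep enough.

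State explicitly that you use continuity of $D^2u$ up to $\partial\Omega$ on the collar, not just boundedness. Boundedness alone does not give the tangential sign, and interior $C^2$ regularity as literally written in the hypothesis would not suffice. This continuity is exactly what Proposition~\ref{prop:first-eigenfunction-boundary-regularity} provides in the application.
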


In the remainder of this subsection, we assume that \(\Omega\) is
strongly convex and that \(\delta>0\) is sufficiently small so that
\(\Omega_\delta\) is also strongly convex.

\smallskip

We next establish the strict concavity of \(\log u\) near the boundary
of each inner parallel domain, together with a global supporting-plane
property that will exclude boundary maxima in the subsequent two-point
argument.

\begin{proposition}
\label{prop:log-eigenfunction-global-boundary-support}
Let \(u\in\mathcal K\) be the positive first eigenfunction of
\eqref{eq:weighted-p-laplacian-eigenvalue}, and set \(v:=\log u\).
There exists \(\delta_1\in(0,\delta_0)\) such that, for every
\(\delta\in(0,\delta_1]\), there exist a relatively open neighborhood
\(\mathcal U_\delta\) of \(\partial\Omega_\delta\) in
\(\overline{\Omega_\delta}\) and \(\kappa_\delta>0\) satisfying $-D^2v\geq2\kappa_\delta I$ in $\mathcal U_\delta.$
Moreover, for every \(y\in\partial\Omega_\delta\),
\[
v(z)
<
v(y)+\nabla v(y)\cdot(z-y)
\qquad
\text{for every }
z\in\overline{\Omega_\delta}\setminus\{y\},
\]
where $\delta_0$ is given in Proposition~\ref{prop:first-eigenfunction-boundary-regularity}.
\end{proposition}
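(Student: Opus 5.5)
The plan is to apply Proposition~\ref{prop:korevaar-boundary-geometry} with \(\varphi(t)=\log t\). The point is to apply it not on \(\Omega\) itself but on each inner parallel domain \(\Omega_\delta\), to a shifted version of \(u\).

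\emph{The choice of \(\varphi\).} For \(\varphi=\log\) we have \(\varphi'(t)=1/t\to+\infty\) and \(\varphi''=-1/t^2<0<\varphi'\). Moreover
\[
\varphi/\varphi'=t\log t\to0,
\qquad
\varphi'/\varphi''=-t\to0,
\]
so all hypotheses on \(\varphi\) hold.

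\emph{Construction on \(\Omega_\delta\).} Fix \(\delta\) small. Let \(c_\delta:=\min_{\partial\Omega_\delta}u\); by Hopf this is close to \(\eta_0\delta\). The level set \(\{u=c_\delta\}\) is not \(\partial\Omega_\delta\), however, so a direct shift \(u-c_\delta\) does not vanish on \(\partial\Omega_\delta\).

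I would therefore work differently. First apply Proposition~\ref{prop:korevaar-boundary-geometry} on \(\Omega\) itself, which is strongly convex with \(C^{2,\alpha_\Omega}\) boundary. Its hypotheses on \(u\) hold: \(u\in C^1(\overline\Omega)\cap C^2(\mathcal N_{\delta_0}\cap\Omega)\) by Proposition~\ref{prop:first-eigenfunction-boundary-regularity}, \(u>0\) in \(\Omega\), and \(\partial_n u\ge\eta_0>0\). This gives \(\delta_*>0\) with \(-D^2v>0\) in \(\Omega\setminus\overline{\Omega_{\delta_*}}\). It also gives the supporting-plane property at every \(y\in\partial\Omega_\delta\) for \(\delta<\delta_*\), which is exactly the second assertion. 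Set \(\delta_1:=\min\{\delta_*,\delta_0\}/2\).

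\emph{Uniform strict concavity near \(\partial\Omega_\delta\).} For \(\delta\in(0,\delta_1]\), choose the neighborhood
\[
\mathcal U_\delta:=\{x\in\overline{\Omega_\delta}:\operatorname{dist}(x,\partial\Omega)<\tfrac32\delta\}\cup\partial\Omega_\delta .
\]
This set is relatively open in \(\overline{\Omega_\delta}\). Its closure lies in the compact annulus \(\{\delta\le\operatorname{dist}(x,\partial\Omega)\le\tfrac32\delta\}\). That annulus is contained in \(\Omega\setminus\overline{\Omega_{\delta_*}}\) and stays away from \(\partial\Omega\).

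On this compact set \(v\in C^2\) by Proposition~\ref{prop:first-eigenfunction-boundary-regularity}, and \(-D^2v\) is positive definite. By continuity of \(D^2v\) and compactness, the smallest eigenvalue of \(-D^2v\) attains a positive minimum there, which I call \(2\kappa_\delta\). This gives \(-D^2v\ge2\kappa_\delta I\) on \(\mathcal U_\delta\).

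\emph{Main obstacle.} The key point is that Proposition~\ref{prop:korevaar-boundary-geometry} only yields pointwise strict negativity of \(D^2w\). A quantitative bound cannot be uniform up to \(\partial\Omega\), since \(D^2v\) blows up there. This is exactly why I work on the interior parallel boundaries \(\partial\Omega_\delta\), where compactness applies, rather than on \(\partial\Omega\).

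I would also check that the supporting-plane inequality from Proposition~\ref{prop:korevaar-boundary-geometry} holds for \(z\) in all of \(\overline{\Omega_\delta}\), not merely near \(y\). This is exactly its stated form, so nothing further is needed there.
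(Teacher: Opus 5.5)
Your final argument is correct and is essentially the paper's proof. Apply Proposition~\ref{prop:korevaar-boundary-geometry} on \(\Omega\) itself with \(\varphi=\log\) and \(\mathcal N=\mathcal N_{\delta_0}\) to get \(\delta_1\) and the supporting-plane inequality on every \(\partial\Omega_\delta\). Then get \(\kappa_\delta\) from the continuity of \(D^2v\) on a compact collar around \(\partial\Omega_\delta\); your explicit \(\delta_1=\min\{\delta_*,\delta_0\}/2\) and the annulus \(\{\delta\le\operatorname{dist}(\cdot,\partial\Omega)\le\tfrac32\delta\}\) simply make that compactness step concrete.

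The abandoned opening idea of working on \(\Omega_\delta\) with a shifted \(u\) should be deleted. Your ``main obstacle'' remark is also off the mark, though it plays no role in the proof. Near a strongly convex \(\partial\Omega\) one actually has \(-D^2v\to+\infty\), so the blow-up is not an obstruction to a lower bound.
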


\begin{proof}
By Proposition~\ref{prop:first-eigenfunction-regularity} and Proposition~\ref{prop:first-eigenfunction-boundary-regularity}, we know that
\[
u\in C^1(\overline\Omega)
\cap C^2(\mathcal N_{\delta_0}\cap\Omega),
\qquad
u>0\ \text{in }\Omega,
\qquad
u=0\ \text{on }\partial\Omega,
\]
and $\frac{\partial u}{\partial n}>0$ on $\partial\Omega,$
where \(n=-\nu\) is the inward unit normal. For \(\varphi(t):=\log t\),
\[
\varphi'(t)=\frac1t\to+\infty,
\qquad
\varphi''(t)=-\frac1{t^2}<0<\varphi'(t)
\quad\text{as }t\to0^+,
\]
and
\[
\frac{\varphi(t)}{\varphi'(t)}
=t\log t\to0,
\qquad
\frac{\varphi'(t)}{\varphi''(t)}
=-t\to0\quad\text{as }t\to0^+.
\]
Hence Proposition~\ref{prop:korevaar-boundary-geometry}, applied with
\(w=\varphi(u)=v\), yields \(\delta_1\in(0,\delta_0)\) such that $-D^2v>0$
in a boundary neighborhood containing
\(\partial\Omega_\delta\) for every
\(\delta\in(0,\delta_1]\), and
\[
v(z)
<
v(y)+\nabla v(y)\cdot(z-y)
\]
for every \(y\in\partial\Omega_\delta\) and
\(z\in\overline{\Omega_\delta}\setminus\{y\}\).

For each fixed \(\delta\), the compactness of
\(\partial\Omega_\delta\) and the continuity of \(D^2v\) allow us to
choose a relatively open neighborhood \(\mathcal U_\delta\) of
\(\partial\Omega_\delta\) and \(\kappa_\delta>0\) such that $-D^2v\geq2\kappa_\delta I$ in $\mathcal U_\delta.$
\end{proof}

Using the \(C^2\)-convergence established above, we transfer the strict
boundary concavity and global supporting-plane property of \(\log u\)
to the regularized logarithmic eigenfunctions \(\log u_\varepsilon\),
uniformly for sufficiently small \(\varepsilon>0\).

\begin{proposition}
\label{prop:regularized-log-boundary-geometry}
Let \(\delta_0>0\) be given by
Proposition~\ref{prop:first-eigenfunction-boundary-regularity}, and let
\(u_\varepsilon\) be the positive regularized first eigenfunction
constructed in
Proposition~\ref{prop:regularized-first-eigenpair}. Fix
\(0<\delta<\delta_1\), and set $v_\varepsilon:=\log u_\varepsilon.$
Then there exist \(\varepsilon_\delta>0\), a relatively open
neighborhood \(\mathcal U_\delta\) of
\(\partial\Omega_\delta\) in \(\overline{\Omega_\delta}\), and a
constant \(\kappa_\delta>0\) such that, for every
\(0<\varepsilon\leq\varepsilon_\delta\), $-D^2v_\varepsilon\geq\kappa_\delta I$ in $\mathcal U_\delta,$
and
\[
v_\varepsilon(z)
<
v_\varepsilon(y)
+
\nabla v_\varepsilon(y)\cdot(z-y)
\]
for every \(y\in\partial\Omega_\delta\) and every
\(z\in\overline{\Omega_\delta}\setminus\{y\}\).
\end{proposition}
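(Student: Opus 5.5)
The argument transfers Proposition~\ref{prop:log-eigenfunction-global-boundary-support} from \(v=\log u\) to \(v_\varepsilon=\log u_\varepsilon\) by \(C^2\)-convergence near \(\partial\Omega_\delta\), and then handles the global supporting-plane property by splitting \(z\) into near and far regions.

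\textbf{Step 1: uniform \(C^2\)-convergence on a compact collar.} Fix \(0<\delta<\delta_1\). Since \(\delta<\delta_1<\delta_0\), the compact set
\[
K_\delta:=\{x\in\Omega:\delta/2\le\operatorname{dist}(x,\partial\Omega)\le\delta_0'\},
\qquad \delta<\delta_0'<\delta_0,
\]
lies in \(\mathcal N_{\delta_0}\cap\Omega\subset\Omega\setminus\mathcal Z_u\), by Proposition~\ref{prop:first-eigenfunction-boundary-regularity}. Moreover \(u\ge c_\delta>0\) on \(K_\delta\). Corollary~\ref{cor:regularized-uniform-convergence} gives \(u_\varepsilon\to u\) in \(C^{2,\gamma}(K_\delta)\) and in \(C^1(\overline\Omega)\). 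Hence \(u_\varepsilon\ge c_\delta/2\) on \(K_\delta\) for small \(\varepsilon\). Consequently \(v_\varepsilon\to v\) in \(C^2(K_\delta)\) and in \(C^1(\overline{\Omega_{\delta/2}})\).

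\textbf{Step 2: local strict concavity.} Take \(\mathcal U_\delta\) and \(\kappa_\delta\) from Proposition~\ref{prop:log-eigenfunction-global-boundary-support}, shrinking \(\mathcal U_\delta\) if needed so that \(\overline{\mathcal U_\delta}\subset K_\delta\). There \(-D^2v\ge2\kappa_\delta I\). Uniform \(C^2\) convergence then gives \(-D^2v_\varepsilon\ge\kappa_\delta I\) on \(\mathcal U_\delta\) for \(\varepsilon\le\varepsilon_\delta\).

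\textbf{Step 3: supporting-plane inequality, where uniformity is the main issue.} Define
\[
\Phi_\varepsilon(y,z):=v_\varepsilon(y)+\nabla v_\varepsilon(y)\cdot(z-y)-v_\varepsilon(z)
\]
on \(\partial\Omega_\delta\times\overline{\Omega_\delta}\), and \(\Phi\) analogously for \(v\). We need \(\Phi_\varepsilon>0\) off the diagonal. Split according to \(|z-y|\).

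\emph{Near region.} Choose \(r_\delta>0\) such that for every \(y\in\partial\Omega_\delta\), the set \(B_{r_\delta}(y)\cap\overline{\Omega_\delta}\) lies in \(\mathcal U_\delta\). Such an \(r_\delta\) exists because \(\mathcal U_\delta\) is relatively open and \(\partial\Omega_\delta\) is compact. The segment \([y,z]\) stays in \(\overline{\Omega_\delta}\cap B_{r_\delta}(y)\) by convexity. Taylor's formula with Step 2 then gives
\[
\Phi_\varepsilon(y,z)\ge\tfrac{\kappa_\delta}{2}|z-y|^2>0 \qquad\text{for }0<|z-y|<r_\delta .
\]

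\emph{Far region.} The set \(\{(y,z):y\in\partial\Omega_\delta,\ z\in\overline{\Omega_\delta},\ |z-y|\ge r_\delta\}\) is compact. The function \(\Phi\) is continuous and strictly positive there by Proposition~\ref{prop:log-eigenfunction-global-boundary-support}, so it has a positive minimum \(m_\delta\). Since \(\Phi_\varepsilon\to\Phi\) uniformly there by the \(C^1\) convergence of Step 1, we get \(\Phi_\varepsilon\ge m_\delta/2>0\) for \(\varepsilon\) small.

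Combining the near and far regions gives the claimed strict inequality for all \(0<\varepsilon\le\varepsilon_\delta\). The only delicate point is that the near-region estimate uses the uniform Hessian bound rather than pointwise strictness of \(\Phi\). This is necessary because \(\Phi\) vanishes on the diagonal, so positivity there cannot be obtained from a positive minimum plus uniform convergence.
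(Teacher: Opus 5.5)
Your proposal is correct and follows the paper's proof essentially step for step. You transfer the Hessian bound to $v_\varepsilon$ via the $C^2$ convergence of Corollary~\ref{cor:regularized-uniform-convergence} on a collar avoiding $\mathcal Z_u$. You then split pairs $(y,z)$ into $|z-y|<r_\delta$, handled by Taylor's formula with the uniform Hessian bound, and $|z-y|\geq r_\delta$, handled by compactness plus $C^1$ convergence, exactly as the paper does. Your added details (the positive lower bound on $u_\varepsilon$ needed to pass convergence through the logarithm, and the Lebesgue-number choice of $r_\delta$) only make explicit what the paper leaves implicit.
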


\begin{proof}
By Proposition~\ref{prop:log-eigenfunction-global-boundary-support},
there exist a relatively open neighborhood
\(\mathcal U_\delta^*\) of \(\partial\Omega_\delta\), with $\overline{\mathcal U_\delta^*} \Subset \Omega \setminus Z_u,$
and \(\kappa_\delta>0\) such that $-D^2v\geq2\kappa_\delta I$ on $\overline{\mathcal U_\delta^*}.$
The convergence in
Corollary~\ref{cor:regularized-uniform-convergence}
therefore gives, after decreasing \(\varepsilon_\delta\), $-D^2v_\varepsilon\geq\kappa_\delta I$ on $\overline{\mathcal U_\delta^*}.$

 Since \(\partial\Omega_\delta\) is compact, there exists \(r_\delta>0\)
such that $[y,z]\subset\mathcal U_\delta^*$
whenever \(y\in\partial\Omega_\delta\),
\(z\in\overline{\Omega_\delta}\), and \(|z-y|<r_\delta\). Hence
Taylor's formula yields
\[
\begin{aligned}
&v_\varepsilon(y)
+\nabla v_\varepsilon(y)\cdot(z-y)
-v_\varepsilon(z)=
-\int_0^1(1-t)
D^2v_\varepsilon\bigl(y+t(z-y)\bigr)
[z-y,z-y]\,dt
\geq
\frac{\kappa_\delta}{2}|z-y|^2
\end{aligned}
\]
whenever \(0<|z-y|<r_\delta\). For the remaining pairs, set
\[
\mathcal K_\delta
:=
\left\{
(y,z)\in\partial\Omega_\delta
\times\overline{\Omega_\delta}:
|z-y|\geq r_\delta
\right\}.
\]
By Proposition~\ref{prop:log-eigenfunction-global-boundary-support},
\[
v(y)+\nabla v(y)\cdot(z-y)-v(z)>0
\qquad\text{on }\mathcal K_\delta.
\]
Since \(\mathcal K_\delta\) is compact, its minimum is positive.
Moreover, $v_\varepsilon\rightarrow v$ in $C^1(\overline{\Omega_\delta})$
by Corollary~\ref{cor:regularized-uniform-convergence}.
Thus the same strict inequality holds on \(\mathcal K_\delta\) for all
sufficiently small \(\varepsilon\), completing the proof.
\end{proof}

\subsection{The two-point maximum principle and log-concavity}

Fix \(\delta>0\) sufficiently small and define the inner parallel set
\[
 \Omega_\delta=
\bigl\{x\in\Omega:
\operatorname{dist}(x,\partial\Omega)>\delta
\bigr\}.
\]
Since \(\Omega\) is strongly convex, \(\Omega_\delta\) is also strongly convex. For $(y,z,t)\in
\overline \Omega_\delta\times\overline \Omega_\delta\times[0,1],$ define
\[\mathcal C_\varepsilon(y,z,t)
:=
(1-t)v_\varepsilon(y)
+
tv_\varepsilon(z)
-
v_\varepsilon((1-t)y+tz).\]
Moreover, \(v_\varepsilon\) is concave in \(\Omega_\delta\) if and only if $\mathcal C_\varepsilon(y,z,t)\leq0$
for every $(y,z,t)\in
\overline \Omega_\delta\times\overline \Omega_\delta\times[0,1].$

\begin{lemma}
\label{lem:regularized-two-point-boundary-exclusion}
Let \(0<\varepsilon\leq\varepsilon_\delta\), where
\(\varepsilon_\delta\) is given by
Proposition~\ref{prop:regularized-log-boundary-geometry}. If
\(\mathcal C_\varepsilon\) attains a positive maximum on $\overline{\Omega_\delta}\times
\overline{\Omega_\delta}\times[0,1],$
then every maximizing point belongs to $\Omega_\delta\times\Omega_\delta\times(0,1).$
Furthermore, its first two components are distinct.
\end{lemma}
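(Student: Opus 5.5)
We argue by contradiction and rule out every boundary configuration of a maximizing point \((y_0,z_0,t_0)\) with \(M:=\mathcal C_\varepsilon(y_0,z_0,t_0)>0\). Existence of a maximum is given: \(v_\varepsilon=\log u_\varepsilon\) is \(C^2\) on \(\overline{\Omega_\delta}\) by Proposition~\ref{prop:regularized-eigenfunction-positivity-regularity}, since \(u_\varepsilon>0\) there. The degenerate cases come first. If \(t_0\in\{0,1\}\), or if \(y_0=z_0\), then \((1-t_0)y_0+t_0z_0\) coincides with \(y_0\) or \(z_0\), and directly \(\mathcal C_\varepsilon(y_0,z_0,t_0)=0\), contradicting \(M>0\). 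Hence \(t_0\in(0,1)\) and \(y_0\neq z_0\), which also gives the claim that the first two components are distinct.

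It remains to exclude \(y_0\in\partial\Omega_\delta\); the case \(z_0\in\partial\Omega_\delta\) follows by the symmetry \(\mathcal C_\varepsilon(y,z,t)=\mathcal C_\varepsilon(z,y,1-t)\). Suppose \(y_0\in\partial\Omega_\delta\), and set \(x_0:=(1-t_0)y_0+t_0z_0\). We vary \(y\) along the segment toward \(z_0\), taking \(y_s:=y_0+s(z_0-y_0)\) for \(s\in[0,1)\). This stays in \(\overline{\Omega_\delta}\) by convexity, and \(t_0\) is kept fixed. Then \(x_s=(1-t_0)y_s+t_0z_0=x_0+s(1-t_0)(z_0-y_0)\). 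Differentiating at \(s=0^+\), maximality forces the one-sided derivative to satisfy
\[
\frac{d}{ds}\Big|_{s=0^+}\mathcal C_\varepsilon(y_s,z_0,t_0)
=(1-t_0)\bigl[\nabla v_\varepsilon(y_0)-\nabla v_\varepsilon(x_0)\bigr]\cdot(z_0-y_0)\le 0 .
\]

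We combine this with interior criticality in the other variables. Since \(t_0\in(0,1)\), the \(t\)-derivative vanishes if \(z_0\) is interior. It gives \(v_\varepsilon(z_0)-v_\varepsilon(y_0)=\nabla v_\varepsilon(x_0)\cdot(z_0-y_0)\). If \(z_0\) also lies on the boundary, varying \(t\) is still admissible, because \(x\) moves along the chord inside \(\overline{\Omega_\delta}\). The previous display then yields
\[
v_\varepsilon(z_0)-v_\varepsilon(y_0)\le \nabla v_\varepsilon(y_0)\cdot(z_0-y_0).
\]
The strict supporting-plane property of Proposition~\ref{prop:regularized-log-boundary-geometry} at \(y_0\in\partial\Omega_\delta\) gives the reverse inequality strictly, namely \(v_\varepsilon(z_0)<v_\varepsilon(y_0)+\nabla v_\varepsilon(y_0)\cdot(z_0-y_0)\). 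At first sight this is consistent, so the two must be combined more carefully. We therefore use the concavity structure directly. Since \(M>0\),
\[
v_\varepsilon(x_0)<(1-t_0)v_\varepsilon(y_0)+t_0v_\varepsilon(z_0)<v_\varepsilon(y_0)+t_0\nabla v_\varepsilon(y_0)\cdot(z_0-y_0),
\]
by the supporting plane at \(y_0\).

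The key step, and the main obstacle, is to turn these relations into a contradiction. Consider the function \(g(\tau):=v_\varepsilon(y_0+\tau(z_0-y_0))\) on \([0,1]\). It lies strictly below its tangent line at \(\tau=0\) for \(\tau>0\), because the whole segment lies in \(\overline{\Omega_\delta}\) and Proposition~\ref{prop:regularized-log-boundary-geometry} applies to every point of it. In addition, \(g''(0)\le-\kappa_\delta|z_0-y_0|^2<0\) near \(\tau=0\), by the uniform boundary concavity on \(\mathcal U_\delta\). The derivative conditions read \(g'(0)\le g'(t_0)=g(1)-g(0)\), so the chord slope of \(g\) on \([0,1]\) is at least \(g'(0)\). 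But \(g(1)<g(0)+g'(0)\) gives a chord slope strictly less than \(g'(0)\), which is the desired contradiction. Hence \(y_0\notin\partial\Omega_\delta\), and symmetrically \(z_0\notin\partial\Omega_\delta\). This places every maximizing point in \(\Omega_\delta\times\Omega_\delta\times(0,1)\).

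The delicate points to check are the following. The first is the justification of the \(t\)-criticality identity when \(z_0\) is also on the boundary; it should still hold because \(t\) ranges over an open interval. The second is admissibility of the one-sided variation in \(y\). Both rely only on convexity of \(\Omega_\delta\).
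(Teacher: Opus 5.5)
Your argument is correct, and it is the infinitesimal form of the paper's argument. The paper slides \(y\) toward \(z\) while readjusting \(t\) to \(\tau=(t-s)/(1-s)\), so that the interior point \(x=(1-t)y+tz\) stays fixed. It then uses the exact identity \((1-s)\mathcal C_\varepsilon(y_s,z,\tau)=(1-s)\mathcal C_\varepsilon(y,z,t)-(1-t)\mathcal C_\varepsilon(y,z,s)\) together with \(\mathcal C_\varepsilon(y,z,s)=-\kappa s+o(s)<0\).

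You instead keep \(t\) fixed, take the one-sided derivative in \(y\), and eliminate \(\nabla v_\varepsilon(x_0)\) by stationarity in \(t\). Because \(\partial_t\mathcal C_\varepsilon=0\) at the maximum, your derivative \((1-t_0)\bigl[g'(0)-g'(t_0)\bigr]\) coincides with the derivative of the paper's path, namely \((1-t_0)\bigl[g'(0)-(g(1)-g(0))\bigr]=(1-t_0)\kappa>0\). The paper's version is slightly more economical: it needs differentiability of \(v_\varepsilon\) only at \(y\) and no stationarity in \(t\). Yours also uses differentiability at \(x_0\), which is available since \(v_\varepsilon\in C^2(\overline{\Omega_\delta})\).

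You should fix a sign slip in your middle paragraph. Combining \(g'(0)\le g'(t_0)\) with \(g'(t_0)=g(1)-g(0)\) gives \(v_\varepsilon(z_0)-v_\varepsilon(y_0)\ge\nabla v_\varepsilon(y_0)\cdot(z_0-y_0)\), not \(\le\). This already contradicts the strict supporting-plane inequality of Proposition~\ref{prop:regularized-log-boundary-geometry} at \(y_0\). So the remark that the two inequalities are ``consistent'' is wrong, and the chain of inequalities from \(M>0\) and the bound on \(g''(0)\) are unnecessary. Your final paragraph is exactly the correct version of this computation.
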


\begin{proof}
Since $\mathcal C_\varepsilon(y,z,0)
=
\mathcal C_\varepsilon(y,z,1)
=
\mathcal C_\varepsilon(y,y,t)
=
0,$
a positive maximum must satisfy \(t\in(0,1)\) and \(y\neq z\). Suppose that a positive maximum is attained at $(y,z,t)\in
\partial\Omega_\delta\times
\overline{\Omega_\delta}\times(0,1).$
By Proposition~\ref{prop:regularized-log-boundary-geometry},
\[
\kappa
:=
\nabla v_\varepsilon(y)\cdot(z-y)
-\bigl(v_\varepsilon(z)-v_\varepsilon(y)\bigr)
>0.
\]
Setting $g(s):=v_\varepsilon\bigl((1-s)y+sz\bigr),$
we obtain, as \(s\downarrow0\),
\[
\mathcal C_\varepsilon(y,z,s)
=
(1-s)g(0)+sg(1)-g(s)
=
-\kappa s+o(s)<0.
\]
Choose \(s\in(0,t)\) sufficiently small and define
\[
y_s:=(1-s)y+sz,
\qquad
\tau:=\frac{t-s}{1-s}.
\]
Since \(\Omega_\delta\) is strongly convex, \(y_s\in\Omega_\delta\);
moreover, \(\tau\in(0,1)\) and
\[
(1-\tau)y_s+\tau z=(1-t)y+tz.
\]
A direct computation gives
\[
(1-s)\mathcal C_\varepsilon(y_s,z,\tau)
=
(1-s)\mathcal C_\varepsilon(y,z,t)
-
(1-t)\mathcal C_\varepsilon(y,z,s).
\]
Since \(\mathcal C_\varepsilon(y,z,s)<0\), it follows that $\mathcal C_\varepsilon(y_s,z,\tau)
>
\mathcal C_\varepsilon(y,z,t),$
contradicting maximality. Hence \(y\notin\partial\Omega_\delta\).
Interchanging \(y\) and \(z\) yields
\(z\notin\partial\Omega_\delta\), and the conclusion follows.
\end{proof}

We now apply the two-point maximum principle to
\(v_\varepsilon=\log u_\varepsilon\). The strict convexity of the
potential rules out a positive interior maximum of the concavity
defect, while Proposition~\ref{prop:regularized-log-boundary-geometry}
excludes boundary maxima.

\begin{proposition}
\label{prop:regularized-log-concavity-uniformly-convex-potential}
Assume that \(\Omega\) is strongly convex and that \(V\) is uniformly
convex in the sense that, for some \(\mu>0\), $V-\frac{\mu}{2}|\cdot|^2$ is convex in $\Omega.$
Fix \(\delta>0\) sufficiently small. Then there exists
\(\varepsilon_\delta>0\) such that, for every
\(\varepsilon\in(0,\varepsilon_\delta]\), $v_\varepsilon:=\log u_\varepsilon$
is concave in \(\Omega_\delta\).
\end{proposition}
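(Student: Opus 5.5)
We argue by contradiction through the two-point maximum principle applied to \(\mathcal C_\varepsilon\), with \(\varepsilon_\delta\) the constant from Proposition~\ref{prop:regularized-log-boundary-geometry}. By Proposition~\ref{prop:regularized-eigenfunction-positivity-regularity}, \(u_\varepsilon\in C^{2,\beta_\varepsilon}(\overline\Omega)\) and \(u_\varepsilon>0\) on \(\overline{\Omega_\delta}\). Hence \(v_\varepsilon=\log u_\varepsilon\in C^2(\overline{\Omega_\delta})\), and \(\mathcal C_\varepsilon\) is continuous on the compact set \(\overline{\Omega_\delta}\times\overline{\Omega_\delta}\times[0,1]\). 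Suppose \(\sup\mathcal C_\varepsilon>0\). Then the supremum is attained, and by Lemma~\ref{lem:regularized-two-point-boundary-exclusion} every maximizer \((y,z,t)\) is interior with \(y\neq z\) and \(t\in(0,1)\). Set \(x:=(1-t)y+tz\in\Omega_\delta\).

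The first step is to rewrite the regularized equation \eqref{eq:regularized-equation-regularity} in terms of \(v=v_\varepsilon\). Since \(u=e^v\), \(\nabla u=e^v\nabla v\) and \(S=e^v(\varepsilon+|\nabla v|^2)^{1/2}\), dividing by \(e^{(p-1)v}\) gives, pointwise in \(\Omega\),
\[
-\operatorname{tr}\bigl(M(\nabla v)D^2v\bigr)+G(\nabla v)=\lambda_\varepsilon-V(x),
\]
where \(W(q):=(\varepsilon+|q|^2)^{(p-2)/2}\) and
\[
M(q):=W(q)\Bigl(I+(p-2)\frac{q\otimes q}{\varepsilon+|q|^2}\Bigr),
\qquad
G(q):=W(q)\bigl(\varepsilon-(p-1)|q|^2\bigr).
\]
The structural point to check is that \(M(q)\) is positive definite and that all lower-order terms depend on \(v\) only through \(\nabla v\). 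The zeroth-order factors \(e^{(p-1)v}\) cancel exactly, by the \((p-1)\)-homogeneity of the equation.

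Next I derive the first- and second-order conditions at the maximizer.
\begin{enumerate}[label=(\alph*)]
\item Differentiating \(\mathcal C_\varepsilon\) in \(y\) and in \(z\) gives \((1-t)\nabla v(y)=(1-t)\nabla v(x)\) and \(t\nabla v(z)=t\nabla v(x)\). Hence \(\nabla v(y)=\nabla v(z)=\nabla v(x)=:\xi\).
\item Taking the second variation along \((y+h,z+h,t)\), which moves \(x\) to \(x+h\), gives \((1-t)D^2v(y)[h,h]+tD^2v(z)[h,h]-D^2v(x)[h,h]\le0\) for every \(h\in\mathbb R^N\). That is, \((1-t)D^2v(y)+tD^2v(z)-D^2v(x)\le0\) as matrices.
\end{enumerate}

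Now take \((1-t)\times\)(equation at \(y\)) \(+\,t\times\)(equation at \(z\)) \(-\) (equation at \(x\)). Because the gradients coincide, the matrix \(M(\xi)\) and the term \(G(\xi)\) are the same at all three points. The \(G\) terms and \(\lambda_\varepsilon\) then cancel, since \((1-t)+t-1=0\). We obtain
\[
-\operatorname{tr}\Bigl(M(\xi)\bigl[(1-t)D^2v(y)+tD^2v(z)-D^2v(x)\bigr]\Bigr)
= V(x)-(1-t)V(y)-tV(z).
\]
The left side is \(\ge0\), being minus the trace of a positive definite matrix times a negative semidefinite one. Uniform convexity of \(V\) gives \(V(x)-(1-t)V(y)-tV(z)\le-\frac{\mu}{2}t(1-t)|y-z|^2<0\), because \(y\neq z\) and \(t\in(0,1)\). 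This is the contradiction, so \(\mathcal C_\varepsilon\le0\) and \(v_\varepsilon\) is concave in \(\Omega_\delta\).

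I expect the main obstacle to be the algebra of the rewritten equation: verifying that the \(\varepsilon\)-regularization produces no term depending on \(v\) itself, and that \(M(q)\) stays uniformly positive definite for all \(p>1\). The smallest eigenvalue of \(M(q)\) is \(W\bigl(1+(p-2)|q|^2/(\varepsilon+|q|^2)\bigr)\ge W\min\{1,p-1\}>0\). Strictness of the contradiction comes only from the uniform convexity of \(V\). Regularity is not an issue, because \(u_\varepsilon\) is \(C^2\) up to \(\overline{\Omega_\delta}\) and the equation holds classically there.
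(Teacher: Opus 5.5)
Your proposal is correct and follows the paper's proof essentially step for step. Boundary maxima are excluded by Lemma~\ref{lem:regularized-two-point-boundary-exclusion}, the first- and second-order conditions give equal gradients and $(1-t)D^2v_\varepsilon(y)+tD^2v_\varepsilon(z)-D^2v_\varepsilon(x)\le 0$, and the equation for $\log u_\varepsilon$ (your $M$ and $G$ are the paper's $A_\varepsilon$ and $-H_\varepsilon$) then collapses to $(1-t)V(y)+tV(z)-V(x)\le 0$, which contradicts the uniform convexity of $V$.

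One cosmetic slip: for $p\ge 2$ the smallest eigenvalue of $M(q)$ is $W(q)$, not the eigenvalue along $q$. Your lower bound $W(q)\min\{1,p-1\}$ is still correct, and positive definiteness is all the argument uses.
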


\begin{proof}
Choose \(\varepsilon_\delta>0\) so that
Lemma~\ref{lem:regularized-two-point-boundary-exclusion} applies, and
fix \(0<\varepsilon\leq\varepsilon_\delta\). Suppose that the concavity
defect
\[
\mathcal C_\varepsilon(y,z,t)
:=
(1-t)v_\varepsilon(y)+tv_\varepsilon(z)
-v_\varepsilon((1-t)y+tz)
\]
has a positive maximum. By
Lemma~\ref{lem:regularized-two-point-boundary-exclusion}, it is attained
at some
\[
(y,z,t)\in\Omega_\delta\times\Omega_\delta\times(0,1),
\qquad y\neq z.
\]
Set \(x:=(1-t)y+tz\). The first and second order conditions at this
maximum give
\[
\nabla v_\varepsilon(y)
=
\nabla v_\varepsilon(z)
=
\nabla v_\varepsilon(x)
=:q
\]
and
\[
(1-t)D^2v_\varepsilon(y)
+tD^2v_\varepsilon(z)
-D^2v_\varepsilon(x)
\leq0
\]
in the sense of quadratic forms. For \(q\in\mathbb R^N\), define
\[
A_\varepsilon(q)
:=
(\varepsilon+|q|^2)^{\frac{p-2}{2}}I
+
(p-2)(\varepsilon+|q|^2)^{\frac{p-4}{2}}
q\otimes q,
\]
where \(I\) denotes the \(N\times N\) identity matrix.
Since \(A_\varepsilon(q)\) is positive definite, taking the trace after
multiplication by \(A_\varepsilon(q)\) yields
\[
\begin{aligned}
0\geq{}&
(1-t)\operatorname{tr}
\bigl(A_\varepsilon(q)D^2v_\varepsilon(y)\bigr)
+t\operatorname{tr}
\bigl(A_\varepsilon(q)D^2v_\varepsilon(z)\bigr)-\operatorname{tr}
\bigl(A_\varepsilon(q)D^2v_\varepsilon(x)\bigr).
\end{aligned}
\]
On the other hand, \(v_\varepsilon\) satisfies
\[
\operatorname{tr}\!\left[
A_\varepsilon(\nabla v_\varepsilon)D^2v_\varepsilon
\right]
+\lambda_\varepsilon-V
+H_\varepsilon(\nabla v_\varepsilon)
=0,
\]
where
\[
H_\varepsilon(q)
:=
(\varepsilon+|q|^2)^{\frac{p-2}{2}}
\bigl((p-1)|q|^2-\varepsilon\bigr).
\]
Using the equality of the three gradients, the terms involving
\(\lambda_\varepsilon\) and \(H_\varepsilon(q)\) cancel, and hence
\[
(1-t)V(y)+tV(z)-V(x)\leq0.
\]
Since \(V-\frac{\mu}{2}|\cdot|^2\) is convex,
\[
(1-t)V(y)+tV(z)-V(x)
\geq
\frac{\mu}{2}t(1-t)|y-z|^2>0,
\]
which is a contradiction. Therefore
\(\mathcal C_\varepsilon\leq0\), and \(v_\varepsilon\) is concave in
\(\Omega_\delta\).
\end{proof}

Passing to the limit as \(\varepsilon\downarrow0\) in
Proposition~\ref{prop:regularized-log-concavity-uniformly-convex-potential}
and then letting the inner domains exhaust \(\Omega\), we obtain the
log-concavity of the original first eigenfunction.

\begin{theorem}
\label{thm:log-concavity-uniformly-convex-potential}
Assume that \(\Omega\) is strongly convex and that, for some \(\mu>0\), $V-\frac{\mu}{2}|\cdot|^2$ is convex in $\Omega.$
Then the positive first eigenfunction \(u\) of
\eqref{eq:weighted-p-laplacian-eigenvalue} is log-concave in \(\Omega\).
\end{theorem}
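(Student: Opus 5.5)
The plan is a two-step limit: first let $\varepsilon\downarrow0$ on a fixed inner parallel domain, then let $\delta\downarrow0$.

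\textbf{Step 1 (limit in $\varepsilon$ on $\Omega_\delta$).} Fix $\delta>0$ small enough that Proposition~\ref{prop:regularized-log-concavity-uniformly-convex-potential} applies. For every $\varepsilon\in(0,\varepsilon_\delta]$, the function $v_\varepsilon=\log u_\varepsilon$ is concave in $\Omega_\delta$. Equivalently,
\[
\mathcal C_\varepsilon(y,z,t)\le0
\qquad\text{for all }(y,z,t)\in\overline{\Omega_\delta}\times\overline{\Omega_\delta}\times[0,1].
\]
By Corollary~\ref{cor:regularized-uniform-convergence}, $u_\varepsilon\to u$ uniformly on $\overline\Omega$. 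Since $u>0$ is continuous on the compact set $\overline{\Omega_\delta}\subset\Omega$, we have $u\ge m_\delta>0$ there. Hence $u_\varepsilon\ge m_\delta/2$ on $\overline{\Omega_\delta}$ for all small $\varepsilon$, and so $v_\varepsilon\to v:=\log u$ uniformly on $\overline{\Omega_\delta}$.

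Since $(1-t)y+tz\in\overline{\Omega_\delta}$ by convexity, uniform convergence lets us pass to the limit pointwise in $\mathcal C_\varepsilon$. This gives
\[
(1-t)v(y)+tv(z)-v\bigl((1-t)y+tz\bigr)\le0
\qquad\text{for all } y,z\in\overline{\Omega_\delta},\ t\in[0,1].
\]
Thus $\log u$ is concave on $\Omega_\delta$.

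\textbf{Step 2 (exhaustion in $\delta$).} Given $y,z\in\Omega$, choose $\delta<\min\{\operatorname{dist}(y,\partial\Omega),\operatorname{dist}(z,\partial\Omega)\}$, also below the smallness threshold required in Step 1. Then $y,z\in\Omega_\delta$, and the segment $[y,z]$ lies in $\Omega_\delta$ because $\Omega_\delta$ is convex. Step 1 gives the concavity inequality for this pair. Since $y,z$ were arbitrary, $\log u$ is concave in $\Omega$, that is, $u$ is log-concave.

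\textbf{Main point to check.} The only delicate issue is consistency of the smallness requirements. Propositions~\ref{prop:regularized-log-boundary-geometry} and \ref{prop:regularized-log-concavity-uniformly-convex-potential} hold for every $\delta\in(0,\delta_1)$ (with $\Omega_\delta$ strongly convex), with an $\varepsilon_\delta$ depending on $\delta$. Since we take $\varepsilon\to0$ for each fixed $\delta$ separately, this dependence is harmless. We also note that the uniform lower bound $u\ge m_\delta$ is exactly what makes convergence of the logarithms uniform. No convergence of second derivatives is needed, because concavity is a closed condition under pointwise convergence.
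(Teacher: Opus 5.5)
Your proposal is correct and follows the paper's argument: fix $\delta$ small so that the relevant segment lies in $\Omega_\delta$, apply Proposition~\ref{prop:regularized-log-concavity-uniformly-convex-potential} for all small $\varepsilon$, and let $\varepsilon\downarrow0$ using Corollary~\ref{cor:regularized-uniform-convergence}. The only cosmetic difference is that the paper passes to the limit in the multiplicative form $u_\varepsilon((1-t)x+ty)\geq u_\varepsilon(x)^{1-t}u_\varepsilon(y)^t$, which needs only pointwise convergence and so makes your uniform lower bound $u\geq m_\delta$ unnecessary.
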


\begin{proof}
Fix \(x,y\in\Omega\). Since \([x,y]\Subset\Omega\), there exists
\(\delta>0\) sufficiently small such that $[x,y]\subset\Omega_\delta.$
By Proposition~
\ref{prop:regularized-log-concavity-uniformly-convex-potential},
\(v_\varepsilon=\log u_\varepsilon\) is concave in \(\Omega_\delta\)
for all sufficiently small \(\varepsilon>0\). Hence, for every
\(t\in[0,1]\),
\[
u_\varepsilon\bigl((1-t)x+ty\bigr)
\geq
u_\varepsilon(x)^{1-t}u_\varepsilon(y)^t.
\]
Since \(u_\varepsilon\to u\) locally uniformly in \(\Omega\) by
Corollary~\ref{cor:regularized-uniform-convergence},
letting \(\varepsilon\downarrow0\) yields
\[
u\bigl((1-t)x+ty\bigr)
\geq
u(x)^{1-t}u(y)^t.
\]
Thus \(u\) is log-concave in \(\Omega\).
\end{proof}

We now remove the uniform convexity assumption on the potential by an
approximation argument. 

\begin{theorem}
\label{thm:log-concavity-strongly-convex-domain}
Assume that \(\Omega\) is strongly convex and that
\(V\in C^{0,\alpha_V}(\overline\Omega)\) is convex. Then the 
positive first eigenfunction \(u\) of
\eqref{eq:weighted-p-laplacian-eigenvalue} is log-concave in
\(\Omega\).
\end{theorem}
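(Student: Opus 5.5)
The plan is to approximate the convex potential \(V\) by uniformly convex potentials and then pass to the limit in the log-concavity inequality given by Theorem~\ref{thm:log-concavity-uniformly-convex-potential}. For \(k\in\mathbb N\), set
\[
V_k(x):=V(x)+\frac1{2k}|x|^2,\qquad x\in\overline\Omega .
\]
Then \(V_k-\frac{1}{2k}|\cdot|^2=V\) is convex, and \(V_k\in C^{0,\alpha_V}(\overline\Omega)\) because \(|x|^2\) is smooth on the bounded set \(\overline\Omega\). Moreover,
\[
\|V_k-V\|_{L^\infty(\Omega)}\le \frac{\sup_{\overline\Omega}|x|^2}{2k}\to0 .
\]
Let \(u_k\) be the positive normalized first eigenfunction for \(V_k\). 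Since \(\Omega\) is strongly convex with \(C^{2,\alpha_\Omega}\) boundary, Theorem~\ref{thm:log-concavity-uniformly-convex-potential} shows that each \(u_k\) is log-concave:
\[
u_k\bigl((1-t)x+ty\bigr)\ge u_k(x)^{1-t}u_k(y)^t .
\]

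The next step is to show \(u_k\to u\) locally uniformly in \(\Omega\). This is the same argument as in Proposition~\ref{prop:regularized-first-eigenpair}. Because \(|\mathcal E_{V_k}(w)-\mathcal E_V(w)|\le\|V_k-V\|_\infty\) for every \(w\in\mathcal K\), we get \(|\lambda_{1,p}(\Omega,V_k)-\lambda_{1,p}(\Omega,V)|\to0\). Consequently \(\{u_k\}\) is bounded in \(W_0^{1,p}(\Omega)\). Any weak limit \(u_*\) is nonnegative, lies in \(\mathcal K\), and satisfies \(\mathcal E_V(u_*)\le\lambda_{1,p}(\Omega,V)\). Simplicity of the first eigenvalue then gives \(u_*=u\), so the whole sequence converges.

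To upgrade to uniform convergence, I would use a uniform \(C^{1,\beta}(\overline\Omega)\) bound. It comes from Lieberman's estimate \cite[Theorem~1]{lieberman-boundary-regularity}, whose structural constants depend only on \(p\), \(\Omega\), \(\sup_k|\lambda_{1,p}(\Omega,V_k)|\), \(\sup_k\|V_k\|_\infty\) and \(\sup_k\|u_k\|_\infty\). The last quantity is uniformly bounded by the Moser iteration in Proposition~\ref{prop:regularized-eigenfunction-positivity-regularity}, whose constant depends only on these same bounds. Arzelà--Ascoli then yields \(u_k\to u\) in \(C^1(\overline\Omega)\), and in particular pointwise.

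Finally, fix \(x,y\in\Omega\) and \(t\in[0,1]\), and let \(k\to\infty\) in the log-concavity inequality. This gives \(u((1-t)x+ty)\ge u(x)^{1-t}u(y)^t\). Since \(u>0\) in \(\Omega\), taking logarithms shows that \(\log u\) is concave, i.e. \(u\) is log-concave.

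The main point requiring care is the uniformity of the a priori bounds. The constants in the regularity results must not depend on \(k\), which holds because they are controlled by \(L^\infty\) bounds on the potential and the eigenvalue, both uniform here. If one prefers to avoid the \(C^{1,\beta}\) argument, strong \(W_0^{1,p}\) convergence also suffices. It gives a.e. convergence along a subsequence, so the inequality holds for a.e. pair \((x,y)\). Continuity of \(u\) then extends it to all \(x,y\in\Omega\).
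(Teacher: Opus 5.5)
Your proposal is correct and follows essentially the same route as the paper. You perturb $V$ by a small multiple of $|x|^2$, apply Theorem~\ref{thm:log-concavity-uniformly-convex-potential}, identify the limit of the perturbed ground states with $u$ via eigenvalue convergence and simplicity, and pass to the limit pointwise in the log-concavity inequality. The only difference is that you get pointwise convergence from a uniform global $C^{1,\beta}$ bound (Lieberman) instead of the paper's uniform local H\"older estimates; either suffices.
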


\begin{proof}
For \(\eta>0\), set $V_\eta(x):=V(x)+\eta|x|^2$
and let \((\lambda_\eta,u_\eta)\) be the corresponding normalized
positive first eigenpair. Since $V_\eta-\eta|\cdot|^2=V$
is convex, \(V_\eta\) is uniformly convex with modulus \(2\eta\).
Hence Theorem~
\ref{thm:log-concavity-uniformly-convex-potential} implies that
\(u_\eta\) is log-concave in \(\Omega\).

We claim that \(u_\eta\to u\) locally uniformly as \(\eta\downarrow0\).
Indeed, since
\[
\|V_\eta-V\|_{L^\infty(\Omega)}
\leq
\eta\sup_{x\in\Omega}|x|^2\longrightarrow0,
\]
the variational characterization gives $|\lambda_\eta-\lambda_1|
\leq
\|V_\eta-V\|_{L^\infty(\Omega)}
\rightarrow0.$
Moreover, \(\{u_\eta\}\) is bounded in \(W_0^{1,p}(\Omega)\). Thus,
along any sequence \(\eta_j\downarrow0\),
\[
u_{\eta_j}\rightharpoonup u_*
\quad\text{in }W_0^{1,p}(\Omega),
\qquad
u_{\eta_j}\to u_*
\quad\text{in }L^p(\Omega)
\]
for a subsequence. The weak lower semicontinuity of the energy shows
that \(u_*\) is a nonnegative normalized minimizer for the potential
\(V\); hence \(u_*=u\) by simplicity of the first eigenvalue. In
addition,
\[
\int_\Omega|\nabla u_{\eta_j}|^p\,dx
=
\lambda_{\eta_j}
-\int_\Omega V_{\eta_j}u_{\eta_j}^p\,dx
\longrightarrow
\int_\Omega|\nabla u|^p\,dx,
\]
so \(u_{\eta_j}\to u\) strongly in \(W_0^{1,p}(\Omega)\).

The uniform boundedness of \(\{\lambda_\eta\}\) and
\(\{V_\eta\}\), together with the standard Moser and local H\"older
estimates (similar to Proposition \ref{prop:regularized-eigenfunction-positivity-regularity}) for
\[
-\Delta_pu_\eta=(\lambda_\eta-V_\eta)u_\eta^{p-1},
\]
gives, for every \(K\Subset\Omega\), $\|u_\eta\|_{C^{0,\alpha_K}(K)}\leq C_K$
uniformly for small \(\eta\). The Arzelà--Ascoli theorem and the uniqueness of the
\(L^p\)-limit therefore yield $u_\eta\rightarrow u$ locally uniformly in  $\Omega
.$ Finally, the log-concavity of \(u_\eta\) gives
\[
u_\eta\bigl((1-t)x+ty\bigr)
\geq
u_\eta(x)^{1-t}u_\eta(y)^t.
\]
Letting \(\eta\downarrow0\) proves the desired inequality for \(u\).
\end{proof}

We are now in a position to prove the log-concavity of the positive
first eigenfunction on an arbitrary bounded convex domain.

\begin{proof}[\textbf{Proof of Theorem~\ref{thm:first-eigenfunction-log-concavity}.}]
Let \(\{\Omega_j\}\) be a smooth strongly convex inner exhaustion of
\(\Omega\), namely,
\[
\overline{\Omega_j}\subset\Omega,
\qquad
\Omega_j\to\Omega
\quad\text{in the Hausdorff sense},
\]
and every compact subset of \(\Omega\) is contained in \(\Omega_j\)
for all sufficiently large \(j\). Since \(V\) is convex,
\(V|_{\overline{\Omega_j}}\) is Lipschitz for every \(j\). Hence
Theorem~\ref{thm:log-concavity-strongly-convex-domain} applies on
\(\Omega_j\). Let $\lambda_j:=\lambda_{1,p}(\Omega_j,V),$
and let \(u_j\) be the corresponding normalized positive first
eigenfunction, extended by zero to \(\Omega\). By
Theorem~\ref{thm:log-concavity-strongly-convex-domain}, \(u_j\) is
log-concave in \(\Omega_j\).

We first show that \(u_j\to u\) locally uniformly in \(\Omega\).
For every \(w\in C_c^\infty(\Omega)\setminus\{0\}\), one has
\(\operatorname{supp}w\subset\Omega_j\) for all sufficiently large
\(j\), and hence
\[
\limsup_{j\to\infty}\lambda_j
\leq
\frac{\displaystyle\int_\Omega|\nabla w|^p\,dx
+\int_\Omega V|w|^p\,dx}
{\displaystyle\int_\Omega|w|^p\,dx}.
\]
Taking the infimum over \(w\) gives $\limsup_{j\to\infty}\lambda_j
\leq
\lambda_{1,p}(\Omega,V)=:\lambda.$
Since \(\|u_j\|_{L^p(\Omega)}=1\) and \(\{\lambda_j\}\) is bounded,
\(\{u_j\}\) is bounded in \(W_0^{1,p}(\Omega)\). Thus, along a
subsequence,
\[
u_j\rightharpoonup u_*
\quad\text{in }W_0^{1,p}(\Omega),
\qquad
u_j\to u_*
\quad\text{in }L^p(\Omega).
\]
By weak lower semicontinuity,
\[
\lambda
\leq
\mathcal E_V(u_*)
\leq
\liminf_{j\to\infty}\mathcal E_V(u_j)
=
\liminf_{j\to\infty}\lambda_j.
\]
Consequently, \(\lambda_j\to\lambda\), and \(u_*\) is a nonnegative
normalized first eigenfunction on \(\Omega\). By simplicity,
\(u_*=u\). Moreover,
\[
\int_\Omega|\nabla u_j|^p\,dx
=
\lambda_j-\int_\Omega Vu_j^p\,dx
\longrightarrow
\lambda-\int_\Omega Vu^p\,dx
=
\int_\Omega|\nabla u|^p\,dx,
\]
so the uniform convexity of \(L^p\) yields $u_j\longrightarrow u$ strongly in $W_0^{1,p}(\Omega).$

The uniform bounds for \(\lambda_j\) and \(V\), together with the
standard Moser and interior H\"older estimates (similar to Proposition \ref{prop:regularized-eigenfunction-positivity-regularity}) for $-\Delta_pu_j=(\lambda_j-V)u_j^{p-1},$
imply that, for every \(K\Subset\Omega\), $\|u_j\|_{C^{0,\alpha_K}(K)}\leq C_K$
for some \(\alpha_K\in(0,1)\) and \(C_K>0\) independent of large
\(j\). The Arzelà--Ascoli theorem and the uniqueness of the \(L^p\)-limit then give
\[
u_j\longrightarrow u
\qquad
\text{locally uniformly in }\Omega.
\]

Finally, fix \(x,y\in\Omega\) and \(t\in[0,1]\). Since
\([x,y]\Subset\Omega\), one has \([x,y]\subset\Omega_j\) for all
sufficiently large \(j\). The log-concavity of \(u_j\) gives $u_j\bigl((1-t)x+ty\bigr)
\geq
u_j(x)^{1-t}u_j(y)^t.$
Let \(j\to\infty\), we obtain
\[
u\bigl((1-t)x+ty\bigr)
\geq
u(x)^{1-t}u(y)^t,
\]
which proves the assertion.
\end{proof}

\section{Fundamental Gap Estimate in Higher Dimensions}
\label{sec:fundamental-gap-convex-potential}

\subsection{Fundamental Gaps on Collapsing Smooth Convex Domains}
\label{subsec:collapsing-smooth-convex-domains}

We first establish the asymptotic behavior of the fundamental gap on the collapsing smooth convex domains introduced above.

\begin{proof}[\textbf{Proof of Proposition~\ref{prop:smooth-quadratic-threshold-collapse}.}]
Set
\[
I_D:=\left(-\frac D2,\frac D2\right),
\qquad
C_\varepsilon:=I_D\times B_\varepsilon^{N-1},
\qquad
s_\varepsilon:=2^{-1/(2m_\varepsilon)}.
\]
Since
\[
\left(\frac{2x}{D}\right)^{2m_\varepsilon}
+
\left(\frac{|y|}{\varepsilon}\right)^{2m_\varepsilon}
<1
\]
defines a bounded convex sublevel set of a smooth convex function whose
gradient does not vanish on the boundary, each
\(\Omega_\varepsilon\) is bounded and convex with \(C^\infty\) boundary.

Moreover, $s_\varepsilon C_\varepsilon
\subset
\Omega_\varepsilon
\subset
C_\varepsilon.$
Therefore
\[
s_\varepsilon\sqrt{D^2+4\varepsilon^2}
\leq
D_\varepsilon
\leq
\sqrt{D^2+4\varepsilon^2},
\]
and hence $D_\varepsilon\longrightarrow D.$ Since $s_\varepsilon^{-p}
=
2^{p/(2m_\varepsilon)},$
we have $s_\varepsilon^{-p}-1
=
O(m_\varepsilon^{-1})
=
o(\varepsilon^2),$
where the last relation follows from
\(m_\varepsilon\varepsilon^2\to\infty\).
By domain monotonicity and the scaling law
\(\lambda_{k,p}(t\Omega,0)=t^{-p}\lambda_{k,p}(\Omega,0)\),
\begin{equation}
\label{eq:smooth-domain-eigenvalue-sandwich}
\lambda_{k,p}(C_\varepsilon,0)
\leq
\lambda_{k,p}(\Omega_\varepsilon,0)
\leq
s_\varepsilon^{-p}\lambda_{k,p}(C_\varepsilon,0),
\qquad k=1,2.
\end{equation}

We now distinguish the two regimes.

\smallskip
\noindent\textbf{(i) Case \(1<p<2\).}
Let $\mu_{p,N}:=\lambda_{1,p}(B_1^{N-1},0).$
For every \(u\in W_0^{1,p}(C_\varepsilon)\), applying the first
Dirichlet Poincar\'e inequality on each transverse section
\(B_\varepsilon^{N-1}\) gives
\[
\int_{C_\varepsilon}|\nabla u|^p
\geq
\int_{C_\varepsilon}|\nabla_yu|^p
\geq
\mu_{p,N}\varepsilon^{-p}
\int_{C_\varepsilon}|u|^p.
\]
Thus $\lambda_{1,p}(C_\varepsilon,0)
\geq
\mu_{p,N}\varepsilon^{-p}.$ Let \(\psi_p\) be the positive first Dirichlet eigenfunction of
\(B_1^{N-1}\), normalized by
\[
\int_{B_1^{N-1}}\psi_p^p\,dz=1,
\qquad
\int_{B_1^{N-1}}|\nabla\psi_p|^p\,dz=\mu_{p,N},
\]
and choose nonnegative functions
\(h_1,h_2\in C_c^\infty(I_D)\) with disjoint supports. Define
\[
v_{i,\varepsilon}(x,y)
:=
h_i(x)\psi_p\!\left(\frac{y}{\varepsilon}\right),
\qquad i=1,2.
\]
After the change of variables \(y=\varepsilon z\),
\[
\int_{C_\varepsilon}|v_{i,\varepsilon}|^p
=
\varepsilon^{N-1}\int_{I_D}|h_i|^p\,dx,\quad
\int_{C_\varepsilon}|\nabla v_{i,\varepsilon}|^p
=
\varepsilon^{N-1-p}
\int_{I_D\times B_1^{N-1}}
\left(A_i^2+\varepsilon^2B_i^2\right)^{p/2}\,dx\,dz,
\]
where $A_i:=|h_i||\nabla\psi_p|$ and $B_i:=|h_i'|\psi_p.$
Hence
\[
\frac{\displaystyle
\int_{C_\varepsilon}|\nabla v_{i,\varepsilon}|^p}
{\displaystyle
\int_{C_\varepsilon}|v_{i,\varepsilon}|^p}
=
\mu_{p,N}\varepsilon^{-p}
+
\frac{\varepsilon^{-p}R_{i,\varepsilon}}
{\displaystyle\int_{I_D}|h_i|^p\,dx},\quad R_{i,\varepsilon}
:=
\int
\left[
\left(A_i^2+\varepsilon^2B_i^2\right)^{p/2}
-A_i^p
\right].
\]

We claim that $\varepsilon^{-p}R_{i,\varepsilon}\rightarrow0.$
Indeed, fix \(\delta>0\). Since \(1<p<2\), on
\(\{A_i<\delta\}\),
\[
0\leq
\left(A_i^2+\varepsilon^2B_i^2\right)^{p/2}-A_i^p
\leq
\varepsilon^pB_i^p,
\]
while on \(\{A_i\geq\delta\}\),
\[
0\leq
\left(A_i^2+\varepsilon^2B_i^2\right)^{p/2}-A_i^p
\leq
\frac p2\delta^{p-2}\varepsilon^2B_i^2.
\]
Therefore
\[
\varepsilon^{-p}R_{i,\varepsilon}
\leq
\int_{\{A_i<\delta\}}B_i^p
+
\frac p2\delta^{p-2}\varepsilon^{2-p}\int B_i^2.
\]
Letting first \(\varepsilon\downarrow0\) and then
\(\delta\downarrow0\), and using
\(B_i=0\) a.e. on \(\{A_i=0\}\), we obtain $\varepsilon^{-p}R_{i,\varepsilon}\rightarrow0.$
Consequently,
\[\frac{\displaystyle
\int_{C_\varepsilon}|\nabla v_{i,\varepsilon}|^p}
{\displaystyle
\int_{C_\varepsilon}|v_{i,\varepsilon}|^p}
=
\mu_{p,N}\varepsilon^{-p}+o(1),
\qquad i=1,2.\]

Let $w_{i,\varepsilon}
:=
\frac{v_{i,\varepsilon}}
{\|v_{i,\varepsilon}\|_{L^p(C_\varepsilon)}},$
and set
\[
\mathcal A_\varepsilon
:=
\left\{
aw_{1,\varepsilon}+bw_{2,\varepsilon}:
|a|^p+|b|^p=1
\right\}.
\]
Since the supports are disjoint,
\(\mathcal A_\varepsilon\) is a compact symmetric subset of the
\(L^p\)-unit sphere with Krasnosel'ski\u{\i} genus two, and
\[
\sup_{u\in\mathcal A_\varepsilon}
\int_{C_\varepsilon}|\nabla u|^p
\leq
\max_{i=1,2}
\frac{\displaystyle
\int_{C_\varepsilon}|\nabla v_{i,\varepsilon}|^p}
{\displaystyle
\int_{C_\varepsilon}|v_{i,\varepsilon}|^p}.
\]
Thus $\lambda_{2,p}(C_\varepsilon,0)
\leq
\mu_{p,N}\varepsilon^{-p}+o(1).$
Together with $\lambda_{1,p}(C_\varepsilon,0)
\geq
\mu_{p,N}\varepsilon^{-p},$
this gives
\[\lambda_{2,p}(C_\varepsilon,0)
-
\lambda_{1,p}(C_\varepsilon,0)
\longrightarrow0.\]

Finally, by \eqref{eq:smooth-domain-eigenvalue-sandwich},
\[
\begin{aligned}
0
&\leq
\lambda_{2,p}(\Omega_\varepsilon,0)
-
\lambda_{1,p}(\Omega_\varepsilon,0)\leq
\lambda_{2,p}(C_\varepsilon,0)
-
\lambda_{1,p}(C_\varepsilon,0)
+
\left(s_\varepsilon^{-p}-1\right)
\lambda_{2,p}(C_\varepsilon,0).
\end{aligned}
\]
Since $s_\varepsilon^{-p}-1=O(m_\varepsilon^{-1}),$
we have $\left(s_\varepsilon^{-p}-1\right)
\lambda_{2,p}(C_\varepsilon,0)\rightarrow0.$
Hence
\[
\lambda_{2,p}(\Omega_\varepsilon,0)
-
\lambda_{1,p}(\Omega_\varepsilon,0)
\longrightarrow0.
\]

\smallskip
\noindent\textbf{(ii) Case \(p=2\).}
Let $0<\mu_{1,N}<\mu_{2,N}\leq\mu_{3,N}\leq\cdots$
denote the Dirichlet eigenvalues of \(B_1^{N-1}\). By separation of
variables, the Dirichlet spectrum of $C_\varepsilon=I_D\times B_\varepsilon^{N-1}$
is
\[
\left\{
\frac{\mu_{j,N}}{\varepsilon^2}
+
\left(\frac{k\pi}{D}\right)^2
:
j,k\in\mathbb N
\right\}.
\]
Hence $\lambda_{1,2}(C_\varepsilon,0)
=
\frac{\mu_{1,N}}{\varepsilon^2}
+
\frac{\pi^2}{D^2}.$
Moreover,
\[
\lambda_{2,2}(C_\varepsilon,0)
=
\min\left\{
\frac{\mu_{1,N}}{\varepsilon^2}
+
\frac{4\pi^2}{D^2},
\,
\frac{\mu_{2,N}}{\varepsilon^2}
+
\frac{\pi^2}{D^2}
\right\}.
\]
Then, for all sufficiently small \(\varepsilon\), $\lambda_{2,2}(C_\varepsilon,0)
=
\frac{\mu_{1,N}}{\varepsilon^2}
+
\frac{4\pi^2}{D^2}.$
Therefore
\[\lambda_{2,2}(C_\varepsilon,0)
-
\lambda_{1,2}(C_\varepsilon,0)
=
\frac{3\pi^2}{D^2}.\]

By \eqref{eq:smooth-domain-eigenvalue-sandwich},
\[
0
\leq
\lambda_{k,2}(\Omega_\varepsilon,0)
-
\lambda_{k,2}(C_\varepsilon,0)
\leq
\left(s_\varepsilon^{-2}-1\right)
\lambda_{k,2}(C_\varepsilon,0),
\qquad k=1,2.
\]
Since $s_\varepsilon^{-2}-1
=
O(m_\varepsilon^{-1}),\,\lambda_{k,2}(C_\varepsilon,0)
=
O(\varepsilon^{-2}),$
we obtain
\[
\lambda_{k,2}(\Omega_\varepsilon,0)
-
\lambda_{k,2}(C_\varepsilon,0)
=
O\!\left(\frac{1}{m_\varepsilon\varepsilon^2}\right)
=
o(1),
\qquad k=1,2.
\]
Consequently,
\[
\lambda_{2,2}(\Omega_\varepsilon,0)
-
\lambda_{1,2}(\Omega_\varepsilon,0)
=
\frac{3\pi^2}{D^2}+o(1),
\]
which completes the proof.
\end{proof}

\subsection{The degenerate weighted Poincar\'e inequality}
\label{subsec:refined-superquadratic-gap}

In this subsection, we assume that $V\equiv0,\,2\le p<\infty$ and  write $\lambda:=\lambda_{1,p}(\Omega,0)$
and let \(u_1>0\) be the first eigenfunction, satisfying
\begin{equation}
\label{eq:section4-first-eigenfunction}
\begin{cases}
-\Delta_pu_1=\lambda u_1^{p-1}&\text{in }\Omega,\\
u_1=0&\text{on }\partial\Omega,
\end{cases}
\qquad
\int_\Omega u_1^p\,dx=1.
\end{equation}
We introduce
\[d\mu:=u_1^p\,dx,
\qquad
v:=-\log u_1,
\qquad
r:=|\nabla v|=\frac{|\nabla u_1|}{u_1},\qquad \operatorname{div}_{\mu}X
:=
u_1^{-p}\operatorname{div}(u_1^pX).\]
By the log-concavity result established in Theorem \ref{thm:first-eigenfunction-log-concavity},
\(v\) is convex in \(\Omega\).

\smallskip

We first record the differential inequalities satisfied by $v$.

\begin{lemma}
\label{lem:log-ground-state-equations}
Let $\mathcal{Z}_{u_1}$ be defined as in \eqref{eq:zeroset}. Then, in the classical pointwise sense in $\Omega \setminus \mathcal{Z}_{u_1}$,
\begin{equation}
\label{eq:log-ground-state-equation}
\operatorname{div}\bigl(r^{p-2}\nabla v\bigr) = \lambda + (p-1)r^p.
\end{equation}
Furthermore, in the weak sense across the entirety of $\Omega$,
\begin{equation}
\label{eq:weighted-log-ground-state-equation}
\operatorname{div}_{\mu}\bigl(r^{p-2}\nabla v\bigr) = \lambda - r^p.
\end{equation}
Finally, as an inequality of Radon measures on $\Omega$,
\begin{equation}
\label{eq:convex-log-key-inequality}
\lambda r^{2-p} + (p-1)r^2 \leq (p-1)\Delta v.
\end{equation}
\end{lemma}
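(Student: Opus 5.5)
The plan is to compute everything in terms of \(v=-\log u_1\) on the regular set, then extend the weighted identity by a truncation argument near the critical set, and finally derive the measure inequality from convexity of \(v\) (Theorem~\ref{thm:first-eigenfunction-log-concavity}).

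\textbf{Step 1: the pointwise identity \eqref{eq:log-ground-state-equation}.} On \(\Omega\setminus\mathcal Z_{u_1}\), Proposition~\ref{prop:first-eigenfunction-regularity} gives \(u_1\in C^{2,\gamma}_{\mathrm{loc}}\), and the equation holds classically. Write \(u_1=e^{-v}\), so that \(\nabla u_1=-u_1\nabla v\) and
\[
|\nabla u_1|^{p-2}\nabla u_1=-u_1^{p-1}r^{p-2}\nabla v .
\]
Taking the divergence gives
\[
-\Delta_pu_1=u_1^{p-1}\operatorname{div}(r^{p-2}\nabla v)+(p-1)u_1^{p-2}\nabla u_1\cdot r^{p-2}\nabla v
=u_1^{p-1}\bigl[\operatorname{div}(r^{p-2}\nabla v)-(p-1)r^p\bigr].
\]
Equating this with \(\lambda u_1^{p-1}\) yields \eqref{eq:log-ground-state-equation}.

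\textbf{Step 2: the weighted identity \eqref{eq:weighted-log-ground-state-equation}.} We have \(u_1^{-p}\operatorname{div}(u_1^pX)=\operatorname{div}X-p\nabla v\cdot X\). Taking \(X=r^{p-2}\nabla v\), so that \(\nabla v\cdot X=r^p\), this becomes \(\lambda+(p-1)r^p-pr^p=\lambda-r^p\) on the regular set.

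For the weak form on all of \(\Omega\), observe that \(u_1^pr^{p-2}\nabla v=-u_1|\nabla u_1|^{p-2}\nabla u_1\), which is continuous. For \(\varphi\in C_c^\infty(\Omega)\), test the weak eigen-equation with \(u_1\varphi\in W_0^{1,p}\):
\[
\int|\nabla u_1|^{p-2}\nabla u_1\cdot(\varphi\nabla u_1+u_1\nabla\varphi)=\lambda\int u_1^p\varphi .
\]
Rewriting in terms of \(\mu\), this reads
\[
-\int r^{p-2}\nabla v\cdot\nabla\varphi\,d\mu=\int(\lambda-r^p)\varphi\,d\mu,
\]
which is exactly the weak statement. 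No regularity on \(\mathcal Z_{u_1}\) is needed.

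\textbf{Step 3: the measure inequality \eqref{eq:convex-log-key-inequality}.} This is the main obstacle. Since \(v\) is convex, \(D^2v\) is a nonnegative matrix-valued Radon measure, and \(\Delta v\) is a nonnegative measure. On the regular set, expand
\[
\operatorname{div}(r^{p-2}\nabla v)=r^{p-2}\Delta v+(p-2)r^{p-4}\langle D^2v\nabla v,\nabla v\rangle .
\]
Convexity gives \(\langle D^2v\nabla v,\nabla v\rangle\le r^2\Delta v\), hence \(\operatorname{div}(r^{p-2}\nabla v)\le(p-1)r^{p-2}\Delta v\) because \(p\ge2\). Dividing \eqref{eq:log-ground-state-equation} by \(r^{p-2}>0\) then gives the inequality pointwise on \(\Omega\setminus\mathcal Z_{u_1}\).

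It remains to handle \(\mathcal Z_{u_1}\). There \(r=0\), so the term \(\lambda r^{2-p}\) is \(+\infty\) when \(p>2\). I would therefore show that \(\mathcal Z_{u_1}\) is small. Since \(v\) is convex, \(\mathcal Z_{u_1}=\{\nabla v=0\}\) is the convex set of minimizers of \(v\). If it had positive measure, then \(u_1\) would be constant on an open set, forcing \(0=-\Delta_pu_1=\lambda u_1^{p-1}>0\) there, a contradiction. So \(\mathcal Z_{u_1}\) is a convex set of measure zero, lying in a hyperplane.

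I then interpret the left side of \eqref{eq:convex-log-key-inequality} as an \(L^1_{\mathrm{loc}}\) density. To get integrability of \(r^{2-p}\) near \(\mathcal Z_{u_1}\), I would test \eqref{eq:log-ground-state-equation} against \(\varphi\,\eta_k(r)\), where \(\eta_k\) is a cutoff vanishing for \(r<1/k\). After integrating by parts, the flux \(r^{p-2}\nabla v\) is bounded, so the right-hand side stays controlled by \(\int\varphi\,d\Delta v\) plus boundary terms that vanish as \(k\to\infty\). Monotone convergence then yields
\[
\int(\lambda r^{2-p}+(p-1)r^2)\varphi\le(p-1)\int\varphi\,d\Delta v .
\]
Controlling the cutoff term \(\int r^{p-2}\nabla v\cdot\nabla(\eta_k(r))\varphi\) is the delicate point. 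I would handle it via the sign structure coming from convexity (the coarea over level sets of \(r\), using that \(\nabla v\cdot\nabla r\ge0\) for convex \(v\)), so that this term has a favorable sign.
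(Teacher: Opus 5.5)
Your proof is correct. Steps 1 and 2, and the pointwise inequality on \(\Omega\setminus\mathcal Z_{u_1}\), coincide with the paper's argument; the difference lies in how you pass to the measure inequality.

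The paper imports \cite[Theorem~C.5]{BrascoLindgren2023}, namely \(|\nabla u_1|^{-s}\in L^1_{\mathrm{loc}}\) for \(s<p-1\). This gives both \(|\mathcal Z_{u_1}|=0\) and \(r^{2-p}\in L^1_{\mathrm{loc}}\) a priori, and the paper then concludes via the Lebesgue decomposition of the Hessian measure, whose singular part is nonnegative. That route is short and does not depend on convexity for the null-set statement.

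You instead obtain \(|\mathcal Z_{u_1}|=0\) from the log-concavity already proved. The critical set is the convex minimizing set of \(v\). If it had positive measure it would have interior, \(u_1\) would be constant there, and this contradicts \(\lambda u_1^{p-1}>0\). You then get local integrability of \(r^{2-p}\) as a consequence of the inequality rather than as an input. This makes the lemma self-contained, and it even supplies the integrability of \(r^{2-p}\) near \(\mathcal Z_{u_1}\) that the paper re-imports from Brasco--Lindgren in Lemma~\ref{lem:ground-state-hardy}.

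Your final step is simpler than you anticipate, and neither integration by parts nor a coarea argument is needed. Since \(\Delta v\geq0\) as a measure and \(v\in C^2\) on the open set \(\Omega\setminus\mathcal Z_{u_1}\), the measure \(\Delta v\) agrees there with the classical Laplacian. Hence, for \(0\le\varphi\in C_c(\Omega)\),
\[
\int_{\Omega\setminus\mathcal Z_{u_1}}\bigl(\lambda r^{2-p}+(p-1)r^2\bigr)\varphi\,dx\le(p-1)\int_{\Omega\setminus\mathcal Z_{u_1}}\varphi\,d(\Delta v)\le(p-1)\int_\Omega\varphi\,d(\Delta v).
\]
To justify the first step, for example, multiply the pointwise inequality by \(\varphi\eta_k(r)\), use \(0\le\eta_k\le1\) and \(\Delta v\ge0\), and apply monotone convergence; then \(|\mathcal Z_{u_1}|=0\) finishes the argument.

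Your integration-by-parts variant also works. The cutoff term \(-\int\varphi\,\eta_k'(r)\,\nabla v\cdot\nabla r\) is nonpositive because \(\nabla v\cdot\nabla r=D^2v[\nabla v,\nabla v]/r\ge0\), but this is a detour.
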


\begin{proof}
By the relation $u_1 = e^{-v}$, direct computation yields $|\nabla u_1|^{p-2}\nabla u_1 = -u_1^{p-1}r^{p-2}\nabla v.$
Substituting this into \eqref{eq:section4-first-eigenfunction} immediately yields \eqref{eq:log-ground-state-equation} in $\Omega \setminus \mathcal{Z}_{u_1}$. To deduce \eqref{eq:weighted-log-ground-state-equation}, we test the weak eigenvalue equation of \eqref{eq:section4-first-eigenfunction} with $u_1\varphi$ for $\varphi \in C_c^\infty(\Omega)$. Recalling $d\mu = u_1^p \,dx$, we obtain
\[
\int_\Omega r^{p-2}\nabla v \cdot \nabla \varphi \, d\mu = \int_\Omega (r^p - \lambda)\varphi \, d\mu,
\]
which is exactly the weak formulation of \eqref{eq:weighted-log-ground-state-equation}.

To prove \eqref{eq:convex-log-key-inequality}, we first work at points
where \(v\) is twice differentiable and \(r=|\nabla v|>0\). Note that
\[
\operatorname{div}\bigl(r^{p-2}\nabla v\bigr)
=
r^{p-2}
\bigl(\Delta v+(p-2)D^2v[e,e]\bigr),\quad e:=\frac{\nabla v}{|\nabla v|}.
\]
Since \(v\) is convex, \(D^2v\) is positive semidefinite at every such
point, and hence $0\leq D^2v[e,e]\leq\Delta v.$
Combining this with \eqref{eq:log-ground-state-equation} and \(p\geq2\)
gives
\begin{equation}
\label{eq:convex-log-key-ae}
\lambda r^{2-p}+(p-1)r^2
\leq
(p-1)\Delta v.
\end{equation}

We next pass from \eqref{eq:convex-log-key-ae} to a global measure
inequality. If \(p=2\), then \(r^{2-p}=1\), and
\eqref{eq:log-ground-state-equation} reduces to $\Delta v=\lambda+r^2,$
so \eqref{eq:convex-log-key-inequality} follows directly. Assume now that \(p>2\). By
\cite[Theorem~C.5]{BrascoLindgren2023},
\[
\int_\Omega |\nabla u_1|^{-s}\,dx<\infty
\qquad\text{for every }s<p-1.
\]
Taking \(s=p-2\) yields $|\mathcal Z_{u_1}|=0$ and $r^{2-p}\in L^1_{\mathrm{loc}}(\Omega).$
Since also \(r^2\in L^1_{\mathrm{loc}}(\Omega)\),
\[
\lambda r^{2-p}+(p-1)r^2\in L^1_{\mathrm{loc}}(\Omega),
\]
and hence \eqref{eq:convex-log-key-ae} holds almost everywhere in
\(\Omega\).

Since \(v\) is convex, its distributional Hessian \(D^2v\) is a
positive semidefinite matrix-valued Radon measure; see
\cite[Theorems~6.8, 6.9]{EvansGariepy2015}. Its absolutely continuous
part agrees almost everywhere with the classical Hessian. Thus
\eqref{eq:convex-log-key-ae} can be written as
\[
\lambda r^{2-p}+(p-1)r^2
\leq
(p-1)(\Delta v)_{\mathrm{ac}}
\qquad\text{a.e. in }\Omega.
\]
Writing the Lebesgue decomposition $\Delta v
=
(\Delta v)_{\mathrm{ac}}\,dx+(\Delta v)_{\mathrm{s}},$
the positivity of \(D^2v\) implies $(\Delta v)_{\mathrm{s}}\geq0.$
Consequently,
\[\bigl(\lambda r^{2-p}+(p-1)r^2\bigr)\,dx
\leq
(p-1)\Delta v\]
as Radon measures on \(\Omega\).
\end{proof}

The preceding inequality yields the inverse-weight estimate which is the
main analytic ingredient of the proof.

\begin{lemma}
\label{lem:ground-state-hardy}
There exists \(C_p>0\), depending only on \(p\), such that every
\(g\in C^{0,1}(\overline\Omega)\) satisfies
\[\int_\Omega r^{2-p}g^2\,d\mu
\leq
C_p\lambda^{-\frac{p-2}{p}}
\left(
\int_\Omega g^2\,d\mu
+
\lambda^{-1}
\int_\Omega r^{p-2}|\nabla g|^2\,d\mu
\right).\]
\end{lemma}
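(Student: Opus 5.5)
The plan is to test the measure inequality \eqref{eq:convex-log-key-inequality} of Lemma~\ref{lem:log-ground-state-equations} against the nonnegative function \(g^2u_1^p\). The resulting term \(\int r^2g^2\,d\mu\) will then be controlled through the weighted equation \eqref{eq:weighted-log-ground-state-equation}, and all constants will be normalized by the scaling \(r\sim\lambda^{1/p}\). Throughout, I first take \(g\) Lipschitz and test with \(\varphi=g^2u_1^p\zeta_k\), where \(\zeta_k\) is a cutoff exhausting \(\Omega\), and remove the cutoff at the end (see the last paragraph).

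\textbf{Step 1.} Since \(\nabla u_1^p=-p\,u_1^p\nabla v\), integrating by parts gives
\[
\int_\Omega g^2u_1^p\,d(\Delta v)
=
p\int_\Omega r^2g^2\,d\mu-2\int_\Omega g\,\nabla g\cdot\nabla v\,d\mu .
\]
Inserting this into \eqref{eq:convex-log-key-inequality} yields
\[
\lambda\int_\Omega r^{2-p}g^2\,d\mu
\le (p-1)^2\int_\Omega r^2g^2\,d\mu+2(p-1)\int_\Omega r|g||\nabla g|\,d\mu .
\]

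\textbf{Step 2.} Next I test \eqref{eq:weighted-log-ground-state-equation} with \(g^2\) and apply Young's inequality in the form
\[
2|g|r^{p-1}|\nabla g|\le \tfrac12 r^pg^2+2r^{p-2}|\nabla g|^2 .
\]
This gives
\[
\int_\Omega r^pg^2\,d\mu\le 2\lambda\int_\Omega g^2\,d\mu+4\int_\Omega r^{p-2}|\nabla g|^2\,d\mu .
\]
I then use the pointwise bound
\[
t^2\le \lambda^{-\frac{p-2}{p}}t^p+\lambda^{\frac2p},\qquad t\ge0 .
\]
To see it, split into the cases \(t^p\ge\lambda\) and \(t^p<\lambda\). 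Together with the previous display, this gives
\[
\int_\Omega r^2g^2\,d\mu\le C\lambda^{\frac2p}\int_\Omega g^2\,d\mu+C\lambda^{-\frac{p-2}{p}}\int_\Omega r^{p-2}|\nabla g|^2\,d\mu .
\]

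\textbf{Step 3.} The cross term is handled by writing
\[
r|g||\nabla g|=\bigl(r^{\frac p2-1}|\nabla g|\bigr)\bigl(r^{2-\frac p2}|g|\bigr)\le \tfrac{\theta}{2}\,r^{4-p}g^2+\tfrac{1}{2\theta}\,r^{p-2}|\nabla g|^2 .
\]
The exponent \(4-p\) lies between \(2-p\) and \(p\). Hence, by the same case-splitting, I get a homogeneous interpolation
\[
r^{4-p}\le \eta\,\lambda^{\frac2p}r^{2-p}+C_\eta\,\lambda^{\frac{4-2p}{p}}r^p .
\]
With \(\theta=\lambda^{-\frac{p-2}{p}}\), the part carrying \(r^{2-p}\) has coefficient of order \(\eta\lambda\). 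Choosing \(\eta\) small absorbs it into the left side of Step~1. The remaining \(r^p\) term is bounded by Step~2. Collecting terms and dividing by \(\lambda\) gives exactly the scaling \(\lambda^{-\frac{p-2}{p}}\bigl(\int g^2\,d\mu+\lambda^{-1}\int r^{p-2}|\nabla g|^2\,d\mu\bigr)\), with a constant depending only on \(p\).

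\textbf{Main obstacle.} The main difficulty is justifying the integration by parts against the measure \(\Delta v\). The measure is only a Radon measure, and \(u_1^p\) vanishes at \(\partial\Omega\) while \(v\to\infty\) there, so cutoffs are needed. With the cutoff, the extra term is
\[
-\int_\Omega g^2u_1^p\nabla\zeta_k\cdot\nabla v\,dx=\int_\Omega g^2u_1^{p-1}\nabla\zeta_k\cdot\nabla u_1\,dx .
\]
Since \(u_1\in C^{1}(\overline\Omega)\), this is bounded by \(C\int_{\{\operatorname{dist}(x,\partial\Omega)<1/k\}}u_1^{p-1}\,dx\to0\), so the cutoff boundary term vanishes. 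The same cutoff handles \eqref{eq:weighted-log-ground-state-equation}, since \(r^{p-1}u_1^p=|\nabla u_1|^{p-1}u_1\) is bounded. Finally, if \(\int_\Omega r^{2-p}g^2\,d\mu\) is not known a priori to be finite, I truncate the weight to \(\min\{r^{2-p},M\}\) in the absorption step and let \(M\to\infty\) by monotone convergence. This is consistent with the local integrability of \(r^{2-p}\) recorded in the proof of Lemma~\ref{lem:log-ground-state-equations}.
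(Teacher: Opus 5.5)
Your proposal is correct and is essentially the paper's proof. You use the same test of \eqref{eq:convex-log-key-inequality} against $g^2u_1^p$, the same test of \eqref{eq:weighted-log-ground-state-equation} against $g^2$ giving $\int_\Omega r^pg^2\,d\mu\le 2\lambda\int_\Omega g^2\,d\mu+4\int_\Omega r^{p-2}|\nabla g|^2\,d\mu$, and the same interpolation and absorption of the cross term. The only difference is that you interpolate $r^{4-p}$ between $r^{2-p}$ and $r^p$ for all $p\ge2$ at once, whereas the paper normalizes $\lambda=1$ and splits into $2\le p\le4$ and $p>4$.

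Two small corrections. First, the scale-correct choice is $\theta=\lambda^{\frac{p-2}{p}}$. With your $\theta=\lambda^{-\frac{p-2}{p}}$, the $r^{2-p}$-coefficient is $\eta\lambda^{\frac{4-p}{p}}$ rather than $\eta\lambda$, and the gradient term carries the wrong power of $\lambda$. Second, the truncation fallback is not needed. As stated it would not permit absorption for $p>4$, since near $r=0$ the term $r^{4-p}$ cannot be dominated by $\eta\min\{r^{2-p},M\}+Cr^p$. Instead, note that the right-hand side of Step~1 is finite for Lipschitz $g$, because $r^2u_1^p=|\nabla u_1|^2u_1^{p-2}$ and $ru_1^p=|\nabla u_1|u_1^{p-1}$ are bounded. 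Fatou's lemma in the cutoff therefore already gives $\int_\Omega r^{2-p}g^2\,d\mu<\infty$ before you absorb.
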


\begin{proof}
By the change of variables \(y=\lambda^{1/p}x\), together with a
multiplicative renormalization of \(u_1\), it suffices to consider
\(\lambda=1\). Set
\[
M:=\int_\Omega g^2\,d\mu,
\qquad
E:=\int_\Omega r^{p-2}|\nabla g|^2\,d\mu,
\qquad
R_s:=\int_\Omega r^s g^2\,d\mu,
\qquad
I:=R_{2-p}.
\]

Choose \(\eta_\varepsilon\in C_c^\infty(\Omega)\) such that
\[
0\leq\eta_\varepsilon\leq1,\qquad
\eta_\varepsilon=0\ \text{if }d(x,\partial\Omega)\leq\varepsilon,
\qquad
\eta_\varepsilon=1\ \text{if }d(x,\partial\Omega)\geq2\varepsilon,
\]
and \(|\nabla\eta_\varepsilon|\leq C/\varepsilon\).
Since \(\eta_\varepsilon g^2u_1^p\) is a nonnegative compactly supported
Lipschitz function, it is admissible in
\eqref{eq:convex-log-key-inequality} by smooth approximation. Hence
\[
\int_\Omega\eta_\varepsilon r^{2-p}g^2\,d\mu
+
(p-1)\int_\Omega\eta_\varepsilon r^2g^2\,d\mu
\leq
(p-1)\int_\Omega\eta_\varepsilon g^2u_1^p\,d(\Delta v).
\]
Using \(\nabla(u_1^p)=-pu_1^p\nabla v\), we obtain
\[
\begin{aligned}
\int_\Omega\eta_\varepsilon g^2u_1^p\,d(\Delta v)
=
p\int_\Omega\eta_\varepsilon g^2r^2\,d\mu
-
2\int_\Omega\eta_\varepsilon
g\nabla v\cdot\nabla g\,d\mu
-
E_\varepsilon,
\end{aligned}
\]
where $E_\varepsilon
:=
\int_\Omega
g^2u_1^p\nabla v\cdot\nabla\eta_\varepsilon\,dx.$ The support of $\nabla \eta_{\varepsilon}$ is contained in
\[A_\varepsilon
:=
\{x\in\Omega:\varepsilon<d(x,\partial\Omega)<2\varepsilon\}.\]
By Proposition \ref{prop:first-eigenfunction-boundary-regularity}, we can obtain that
the boundary estimates $u_1(x)\asymp d(x,\partial\Omega)$ and $|\nabla u_1(x)|\asymp1$
in a fixed boundary collar. Since $g\in C^{0,1}(\overline\Omega)$, we have
\[
|E_\varepsilon|
\leq
C\varepsilon^{-1}
\int_{A_\varepsilon}u_1^{p-1}|\nabla u_1|\,dx
\leq
C\varepsilon^{p-1}\longrightarrow0.
\]
Moreover, $r^{2-p}g^2,\,r^2g^2,\,
|g|\,|\nabla g|\,r
\in L^1(\Omega,\mu).$
Indeed, the boundary integrability follows from Proposition \ref{prop:first-eigenfunction-boundary-regularity};
for \(p>2\), the possible singularity of \(r^{2-p}\) near
\(\mathcal Z_{u_1}\) is controlled by
\cite[Theorem~C.5]{BrascoLindgren2023}, while for \(p=2\),
\(r^{2-p}\equiv1\). Letting \(\varepsilon\downarrow0\), we obtain
\[
I+(p-1)R_2
\leq
(p-1)\left(
pR_2
-
2\int_\Omega g\nabla v\cdot\nabla g\,d\mu
\right),
\]
and therefore
\begin{equation}
\label{eq:hardy-first-estimate}
I
\leq
(p-1)^2R_2+2(p-1)|C|,
\qquad
C:=
\int_\Omega g\nabla v\cdot\nabla g\,d\mu.
\end{equation}

Similarly, testing \eqref{eq:weighted-log-ground-state-equation} against
\(\eta_\varepsilon g^2\) and arguing as above to remove the cutoff, we obtain
\[
R_p
=
M+
2\int_\Omega
g r^{p-2}\nabla v\cdot\nabla g\,d\mu.
\]

By Cauchy--Schwarz and Young's inequality,
\[
R_p
\leq
M+2R_p^{1/2}E^{1/2}
\leq
M+\frac12R_p+2E,
\]
and consequently $R_p\leq2M+4E.$
Furthermore, $R_2
\leq
R_p^{2/p}M^{1-2/p}
\leq
C_p(M+E).$
It remains to estimate \(C\).  We have
\begin{equation}
\label{eq:C-basic}
|C|
\leq
E^{1/2}R_{4-p}^{1/2}.
\end{equation}

If \(2\le p\leq4\), then \(0\leq4-p\le p\), and interpolation gives
\[
R_{4-p}
\leq
R_p^{(4-p)/p}
M^{1-(4-p)/p}
\leq
C_p(M+E).
\]
Together with \eqref{eq:C-basic}, this yields $|C|\leq C_p(M+E).$
Hence \eqref{eq:hardy-first-estimate} gives $I\leq C_p(M+E).$

Suppose now that \(p>4\), and set $\theta:=\frac{p-4}{p-2}\in(0,1).$
Since $4-p=\theta(2-p),$
Hölder's inequality gives $R_{4-p}
\leq
I^\theta M^{1-\theta}.$
Therefore $|C|
\leq
E^{1/2}I^{\theta/2}M^{(1-\theta)/2}.$
Young's inequality yields, for every \(\varepsilon>0\),
\[
|C|
\leq
\varepsilon I
+
C_{p,\varepsilon}
E^{\frac{p-2}{p}}M^{\frac2p}
\leq
\varepsilon I+C_{p,\varepsilon}(E+M).
\]
Choosing \(\varepsilon\) sufficiently small in
\eqref{eq:hardy-first-estimate} and absorbing the \(I\)-term gives again $I\leq C_p(M+E)$.
\end{proof}

We shall also use the following elementary diameter bound.

\begin{lemma}
\label{lem:first-eigenvalue-diameter-lower-bound}
Let $\pi_p$ be defined as in \eqref{eq:pip} and $D:=\operatorname{diam}(\Omega)$. Then
\[
\lambda_{1,p}(\Omega,0) \geq (p-1)\left(\frac{\pi_p}{D}\right)^p.
\]
\end{lemma}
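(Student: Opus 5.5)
The plan is to reduce the statement to the sharp one-dimensional Poincar\'e inequality on segments, using the log-concavity of \(u_1\) from Theorem~\ref{thm:first-eigenfunction-log-concavity}. Equivalently, one can argue by Payne--Weinberger type slicing: the \(p\)-Laplacian analogue of the Payne--Weinberger estimate (Ferone--Nitsch--Trombetti style) gives, for convex domains, a lower bound for the Dirichlet eigenvalue in terms of the diameter alone. The cleanest route I would take, however, avoids slicing and uses a direct one-dimensional comparison along the direction of the diameter.

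\textbf{Step 1: reduce to a slab.} Let \(D=\operatorname{diam}(\Omega)\). Choose the direction \(e_1\) and observe that the projection of \(\Omega\) onto the \(x_1\)-axis is an interval \(J\) of length at most \(D\). After translation, \(\Omega\subset \{x: x_1\in J\}\) with \(|J|\le D\).

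\textbf{Step 2: apply the 1D inequality on lines.} For \(w\in C_c^\infty(\Omega)\), and for each fixed \(x'\in\mathbb R^{N-1}\), the function \(t\mapsto w(t,x')\) is compactly supported in the open set \(\{t:(t,x')\in\Omega\}\). By convexity this set is an interval of length at most \(D\). The one-dimensional Dirichlet eigenvalue \eqref{eq:first eigen} and its monotonicity under interval length give
\[
\int |\partial_t w(t,x')|^p\,dt\ \ge\ (p-1)\Bigl(\frac{\pi_p}{D}\Bigr)^p\int |w(t,x')|^p\,dt .
\]

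\textbf{Step 3: integrate and conclude.} Integrate over \(x'\) and use \(|\partial_1 w|\le|\nabla w|\). This yields \(\int_\Omega|\nabla w|^p\ge (p-1)(\pi_p/D)^p\int_\Omega|w|^p\). By density of \(C_c^\infty(\Omega)\) in \(W_0^{1,p}(\Omega)\) and the variational characterization \eqref{eq:first-eigenvalue-variational} with \(V\equiv0\), the claimed lower bound follows.

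The only point needing care is Step~2. There I must confirm that the one-dimensional first eigenvalue on an interval of length \(\ell\) equals \((p-1)(\pi_p/\ell)^p\), which is \eqref{eq:first eigen} combined with scaling (Lemma~\ref{lem:nu-scaling-global} with \(a=0\)). I must also check that it is decreasing in \(\ell\); since \(\ell\le D\), this gives the bound with \(D\). Measurability of slices is standard by Fubini. Note that this argument does not even need log-concavity, and in fact only uses the width in one direction bounded by \(D\), so no obstacle is expected.
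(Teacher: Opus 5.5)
Your argument is correct and is essentially the paper's proof: fix a direction, apply the sharp one-dimensional Dirichlet $p$-Poincar\'e inequality on each chord (an interval of length at most $D$), integrate over the transverse variable, use $|\partial_e w|\le|\nabla w|$, and conclude by density. The opening references to log-concavity and to ``avoiding slicing'' are unnecessary and misleading, since your Steps 2--3 are exactly a slicing argument and use no log-concavity, so you can simply drop them.
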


\begin{proof}
Fix \(e\in\mathbb S^{N-1}\).  Since \(\Omega\) is convex, for almost every
\(y\in e^\perp\) the section $I_y:=\{t\in\mathbb R:y+te\in\Omega\}$
is an interval of length at most \(D\).  For
\(\varphi\in C_c^\infty(\Omega)\), the one-dimensional Dirichlet
\(p\)-Poincaré inequality gives (see \cite{Lindqvist1995,LangEdmunds2011} or \eqref{eq:first eigen})
\[
\int_{I_y}
|\partial_e\varphi(y+te)|^p\,dt
\geq
(p-1)\left(\frac{\pi_p}{D}\right)^p
\int_{I_y}|\varphi(y+te)|^p\,dt.
\]
Integration with respect to \(y\), followed by
\(|\partial_e\varphi|\leq|\nabla\varphi|\), proves the claim by density.
\end{proof}

We next extract an \(L^1\) consequence of the sharp weighted Poincaré
inequality of \cite{ferone-nitsch-trombetti}.

\begin{lemma}
\label{lem:weighted-L1-diameter}
Let \(\nu\) be a log-concave probability measure supported on a bounded
convex set of diameter \(D\).  Then, for every bounded Lipschitz function
\(h\),
\begin{equation}
\label{eq:weighted-L1-diameter}
\inf_{a\in\mathbb R}
\int |h-a|\,d\nu
\leq
\frac D2\int|\nabla h|\,d\nu.
\end{equation}
\end{lemma}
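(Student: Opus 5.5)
The plan is to derive \eqref{eq:weighted-L1-diameter} from the sharp $L^1$ weighted Poincar\'e inequality of Ferone, Nitsch and Trombetti, \cite[Theorem~1.1]{ferone-nitsch-trombetti}. That result gives, for a bounded convex set $K$ of diameter $D$ and a positive log-concave weight $w$ on $K$,
\[
\int_K |h-\operatorname{med}_w h|\,w\,dx\leq\frac D2\int_K|\nabla h|\,w\,dx,
\]
with optimal constant $D/2$. Here $\operatorname{med}_w h$ denotes a weighted median. Since the left-hand side of \eqref{eq:weighted-L1-diameter} is an infimum over $a\in\mathbb R$, it is bounded by the value at $a=\operatorname{med}_w h$, which gives the claim directly once $\nu$ has a log-concave density.

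The first step is to reduce to that setting. A log-concave probability measure $\nu$ supported on a bounded convex set lives on its affine hull $L$. On $L$ it has a log-concave density $w$ with respect to Lebesgue measure on $L$, and this density is supported on a convex set $K\subset L$ with $\operatorname{diam}K\leq D$; this is Borell's characterization. If $\dim L=N$ we are in the situation above. If $\dim L<N$, we apply the same inequality inside $L$. There the gradient of $h|_L$ is the tangential gradient, whose norm is at most $|\nabla h|$, so the right-hand side only decreases. In our application, $\nu=\mu=u_1^p\,dx$ has a log-concave density by Theorem~\ref{thm:first-eigenfunction-log-concavity}, so this reduction is trivial.

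The second step concerns the regularity of $w$. Ferone--Nitsch--Trombetti state their result for log-concave weights, possibly after approximation by smooth positive ones. If needed, I would approximate $w$ by $w_k=w*\rho_k$ restricted to $K$, normalized to have mass one. Log-concavity is preserved under convolution by Pr\'ekopa--Leindler, provided $\rho_k$ is chosen log-concave (for instance a Gaussian) and $w$ is extended by $0$, which keeps it log-concave. This slightly enlarges the support, so $w_k$ should be restricted to $K$ and the diameter treated as $D+o(1)$. Passing to the limit then uses that $h$ is bounded Lipschitz, so both sides converge by dominated convergence and $D+o(1)\to D$. The infimum over $a$ is stable under this limit because the minimizing $a$ stays in $[\inf h,\sup h]$.

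The only delicate point is checking that the cited theorem applies in exactly this generality: a log-concave weight that possibly vanishes at the boundary, an $L^1$ norm, and constant $D/2$. If it is only stated for $p\geq1$ with constant $\pi_p$-type values, I would instead take the $p=1$ case, where the sharp constant equals $D/2$. Failing that, I would reprove the inequality by the standard needle decomposition (Payne--Weinberger bisection), which reduces it to one-dimensional log-concave densities on segments of length at most $D$. On such a segment, the $L^1$ inequality around the median with constant $\ell/2\leq D/2$ follows from the elementary bound $\int|h-m|w\leq\int|h'|(t)\,\min\{W(t),1-W(t)\}\,dt$, where $W$ is the distribution function of $w$. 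It is then enough to show $\min\{W,1-W\}\leq(\ell/2)w$ on the segment, which holds for log-concave $w$.
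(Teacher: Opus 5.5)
Your route differs from the paper's. The paper never uses an $L^1$ form of the Ferone--Nitsch--Trombetti theorem. It applies their Theorem~1.1 for exponents $q>1$, which gives $\inf_{a}\|h-a\|_{L^q(\nu)}\leq (D/\widehat\pi_q)\|\nabla h\|_{L^q(\nu)}$ with $\widehat\pi_q=(q-1)^{1/q}\pi_q$, and then lets $q\downarrow1$. It uses $\widehat\pi_q\to2$ and the fact that the optimal constants $a$ lie in $[\operatorname*{ess\,inf}h,\operatorname*{ess\,sup}h]$, so both sides converge for bounded Lipschitz $h$.

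This matters for your main step. The paper invokes the theorem only for $q>1$, and its constant $2\int_0^\infty ds/(1+s^q/(q-1))$ is not defined at $q=1$. So you should not count on an $L^1$/median statement with constant $D/2$ being available there verbatim. Your first fallback (``take the $p=1$ case'') presupposes the same thing. The short repair is exactly the limit $q\downarrow1$. You already have its only nontrivial ingredient, the boundedness of the minimizing constants, but you used it for the weight approximation instead.

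Your second fallback, bisection/needle decomposition plus a one-dimensional bound, is a genuinely independent and valid proof. It reproves from scratch what the cited theorem contains, but it does not depend on which exponents that theorem covers. Two points should be made explicit.

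First, the decomposition must preserve a common constant. Fix a $\nu$-median $a$ of $h$, and choose $f$ with $f=\operatorname{sgn}(h-a)$ off $\{h=a\}$, $|f|\leq1$ on $\{h=a\}$, and $\int f\,d\nu=0$. Bisecting so that $\int f\,d\nu=0$ on every piece keeps $a$ a median on every needle. The one-dimensional estimates, each with constant $\ell/2\leq D/2$, can then be summed.

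Second, the bound $\min\{W,1-W\}\leq\frac{\ell}{2}w$ needs proof. Since $W$ and $1-W$ are log-concave, $W/w$ is nondecreasing and $(1-W)/w$ is nonincreasing. This reduces the claim to $w(t_0)\geq1/\ell$ at the point where $W(t_0)=\frac12$. That follows by integrating $w(s)\leq w(t_0)e^{\alpha(s-t_0)}$ on each side of $t_0$, with $\alpha$ a supergradient of $\log w$ at $t_0$, and then using an elementary inequality.

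Your preliminary reductions (Borell's theorem, the affine hull, Gaussian smoothing) add generality the paper does not need. In its application, $\nu=u_1^p\,dx$ already has a positive log-concave density on $\Omega$. Also, restricting the smoothed weight to $K$ keeps the diameter at most $D$, so no $D+o(1)$ correction arises.
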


\begin{proof}
For every \(q>1\), \cite[Theorem~1.1]{ferone-nitsch-trombetti} gives
\[
\inf_{a\in\mathbb R}
\|h-a\|_{L^q(\nu)}
\leq
\frac{D}{\widehat\pi_q}
\|\nabla h\|_{L^q(\nu)},\quad  \widehat\pi_q
:=
2\int_0^\infty
\frac{ds}{1+\frac{s^q}{q-1}}
=
(q-1)^{1/q}\pi_q.
\]
Since $\widehat\pi_q\rightarrow2$ as $q\downarrow1,$
letting \(q\downarrow1\) gives \eqref{eq:weighted-L1-diameter}.  Indeed,
the minimizing constants may be chosen between
\(\operatorname*{ess\,inf}h\) and
\(\operatorname*{ess\,sup}h\), so that the passage to the limit on the
left-hand side is immediate.
\end{proof}

We can now prove the degenerate weighted Poincar\'e inequality.
 
\begin{proof}[\textbf{Proof of Theorem~\ref{thm:ground-state-weighted-spectral-gap}.}]
We first prove the uniform estimate. Choose a median \(m\in\mathbb R\) of \(g\), so that $\mu(\{g>m\})\leq\frac12,\,\mu(\{g<m\})\leq\frac12,$
and set
\[
G:=g-m,
\qquad
G_+:=\max\{G,0\},
\qquad
G_-:=\max\{-G,0\}.
\]
Then $\mu(\{G_\pm^2=0\})\geq\frac12,$
hence \(0\) is a median of \(G_\pm^2\). Since a median minimizes the
\(L^1(\mu)\)-distance to constants, Lemma~\ref{lem:weighted-L1-diameter}
applied to \(G_\pm^2\) gives
\[
\int_\Omega G_\pm^2\,d\mu
\leq
D\int_\Omega G_\pm|\nabla G_\pm|\,d\mu.
\]
Adding the two inequalities yields $\int_\Omega G^2\,d\mu
\leq
D\int_\Omega |G||\nabla G|\,d\mu.$
Set
\[
M:=\int_\Omega G^2\,d\mu,
\qquad
E:=\int_\Omega r^{p-2}|\nabla G|^2\,d\mu,
\qquad
I:=\int_\Omega r^{2-p}G^2\,d\mu.
\]
By Cauchy--Schwarz,
\[
M
\leq
D E^{1/2}I^{1/2},
\qquad\text{and hence}\qquad
M^2\leq D^2EI.
\]
Lemma~\ref{lem:ground-state-hardy} therefore gives
\[
M^2
\leq
c_pD^2\lambda^{-\frac{p-2}{p}}
E\left(M+\lambda^{-1}E\right).
\]

Let $A:=D^2\lambda^{-\frac{p-2}{p}}E.$
By Lemma~\ref{lem:first-eigenvalue-diameter-lower-bound}, $D^2\lambda^{2/p}
\geq
(p-1)^{2/p}\pi_p^2,$
and hence
\[
\lambda^{-1}E
=
\frac{A}{D^2\lambda^{2/p}}
\leq
\frac{A}{(p-1)^{2/p}\pi_p^2}.
\]
Thus, for some \(K_p>0\) depending only on \(p\), $M^2\leq K_pA(M+A),$ which implies $M\leq K_p'A$
for some \(K_p'>0\) depending only on \(p\). Consequently,
\[
\operatorname{Var}_{\mu}(g)
\leq
\int_\Omega |g-m|^2\,d\mu
=M
\leq
K_p'D^2\lambda^{-\frac{p-2}{p}}
\int_\Omega r^{p-2}|\nabla g|^2\,d\mu.
\]
In particular, for \(\int_\Omega g\,d\mu=0\), $\Lambda_p(\Omega)
\geq
\frac{1}{K_p'}
D^{-2}\lambda^{\frac{p-2}{p}},$
and therefore $C_p^{\mathrm{opt}}\leq K_p'<\infty.$
By the definition of \(C_p^{\mathrm{opt}}\),
\[
\Lambda_p(\Omega)
\geq
\frac{1}{C_p^{\mathrm{opt}}}
D^{-2}\lambda^{\frac{p-2}{p}},
\]
which is equivalent, after replacing \(g\) by
\(g-\int_\Omega g\,d\mu\), to
\[
\operatorname{Var}_{\mu}(g)
\leq
C_p^{\mathrm{opt}}D^2
\lambda^{-\frac{p-2}{p}}
\int_\Omega r^{p-2}|\nabla g|^2\,d\mu.
\]

\medskip
\noindent
\textbf{(i)}
Let \(p=2\). For \(w=u_1g\), the ground-state identity gives
\[
\int_\Omega u_1^2|\nabla g|^2\,dx
=
\int_\Omega |\nabla w|^2\,dx
-
\lambda_{1,2}(\Omega,0)\int_\Omega w^2\,dx.
\]
Moreover, $\int_\Omega g\,d\mu=0
\Longleftrightarrow
\int_\Omega wu_1\,dx=0.$
Hence, by the variational characterization of the second Dirichlet
eigenvalue and a standard density argument,
\[
\begin{aligned}
\Lambda_2(\Omega)
&=
\inf_{\substack{w\in H_0^1(\Omega)\setminus\{0\}\\
\int_\Omega wu_1\,dx=0}}
\left(
\frac{\displaystyle\int_\Omega|\nabla w|^2\,dx}
{\displaystyle\int_\Omega w^2\,dx}
-\lambda_{1,2}(\Omega,0)
\right)=
\lambda_{2,2}(\Omega,0)-\lambda_{1,2}(\Omega,0).
\end{aligned}
\]
By \cite[Theorem 1]{AmatoBucurFragala2024} and \cite{Lavine1994}, we obtain that $\Gamma_2(\Omega,0)
\geq
\frac{3\pi^2}{D^2}.$
Thus \(C_2^{\mathrm{opt}}\leq1/(3\pi^2)\). Equality follows from the
one-dimensional interval, for which $\Gamma_2(I_D,0)=\frac{3\pi^2}{D^2}.$

\medskip
\noindent
\textbf{(ii)}
Let \(N=1\) and \(\Omega=I_D=(-D/2,D/2)\). Write \(u=u_1\) and set
\[
q:=u^2|u'|^{p-2},
\qquad
\rho:=u^p,
\qquad
b:=u'.
\]
Since $r^{p-2}\,d\mu
=
q\,dx,$
the quantity \(\Lambda_p(I_D)\) is precisely the first nonzero
eigenvalue associated with the weighted Neumann form $\phi\mapsto\int_{I_D}q|\phi'|^2\,dx$
in \(L^2(I_D,\rho\,dx)\). On each of the two intervals where \(u'\neq0\), the eigenvalue equation gives
\[
-(p-1)|u'|^{p-2}u''
=
\lambda_{1,p}(I_D,0)u^{p-1},
\]
and hence
\[
qb'
=
u^2|u'|^{p-2}u''
=
-\frac{\lambda_{1,p}(I_D,0)}{p-1}u^{p+1}.
\]
By \eqref{eq:first eigen}, we obtain
\[
-(qb')'
=
\frac{p+1}{p-1}\lambda_{1,p}(I_D,0)\rho b
=
(p+1)\left(\frac{\pi_p}{D}\right)^p\rho b.
\]
Moreover,
\[
qb'=0\quad\text{at }x=\pm\frac D2,
\qquad
\int_{I_D}\rho b\,dx=0.
\]
The function \(b=u'\) belongs to the natural form domain of $\mathcal Q[\phi]:=\int_{I_D}q|\phi'|^2\,dx.$
Since \(u\) is strictly increasing on \((-D/2,0)\) and strictly
decreasing on \((0,D/2)\), \(b\) has exactly one zero in \(I_D\).
Hence the oscillation theorem for singular Sturm--Liouville problems with separated boundary
conditions \cite[Theorem~5.3]{NiessenZettl1992}  shows that \(b\)
corresponds to the first nonzero eigenvalue $\lambda_{N,1}(q,\rho)$ of the associated weighted
Neumann problem, namely, $\lambda_{N,1}(q,\rho)
=
(p+1)\left(\frac{\pi_p}{D}\right)^p.$

It remains to identify this value with \(\Lambda_p(I_D)\), whose
admissible class consists of Lipschitz functions. This is immediate for
\(p=2\). Let \(p>2\) and set \(\alpha:=1/(p-1)\). Near \(x=0\),
\[
|b(x)|\asymp |x|^\alpha,
\qquad
q(x)\asymp |x|^{1-\alpha},
\qquad
|b'(x)|\asymp |x|^{\alpha-1}.
\]
For \(\delta>0\), let \(b_\delta=b\) on
\(I_D\setminus(-\delta,\delta)\) and extend it linearly and oddly on
\((-\delta,\delta)\). Then $b_\delta\in C^{0,1}(\overline{I_D}),\,\int_{I_D}\rho b_\delta\,dx=0,$
and
\[
b_\delta\to b
\quad\text{in }L^2(I_D,\rho\,dx),
\qquad
\int_{I_D}q|b_\delta'|^2\,dx
\longrightarrow
\int_{I_D}q|b'|^2\,dx,
\]
since
\[
\int_{-\delta}^{\delta}q|b'|^2\,dx
+
\int_{-\delta}^{\delta}q|b_\delta'|^2\,dx
\lesssim \delta^\alpha\to0.
\]
Thus $\Lambda_p(I_D)
\leq
\lambda_{N,1}(q,\rho).$
The reverse inequality follows because
\(C^{0,1}(\overline{I_D})\) is contained in the natural form domain.
Consequently, $\Lambda_p(I_D)
=
(p+1)\left(\frac{\pi_p}{D}\right)^p.$
By the definition of \(C_p^{\mathrm{opt}}\),
\[
\begin{aligned}
C_p^{\mathrm{opt}}
&\geq
\frac{\lambda_{1,p}(I_D,0)^{\frac{p-2}{p}}}
{D^2\Lambda_p(I_D)}
=
\frac{(p-1)^{\frac{p-2}{p}}}
{(p+1)\pi_p^2}.
\end{aligned}
\]
This completes the proof.
\end{proof}

We conclude this subsection with the following optimal Poincar\'e inequality for log-concave weights established in
\cite[Theorem~1.1]{ferone-nitsch-trombetti}.

\begin{lemma}
\label{lem:weighted-poincare-log-concave-gap}
Let $p>1$ and $\omega$ be a positive log-concave function on a bounded convex domain $\Omega
$. Then every
Lipschitz function \(f\) on \(\Omega\) satisfies
\begin{equation}\label{eq:weighted-poincare-modulo-constants}
\int_\Omega|\nabla f|^p\omega\,dx
\geq
(p-1)\left(\frac{\pi_p}{D}\right)^p
\inf_{c\in\mathbb R}
\int_\Omega|f-c|^p\omega\,dx,
\end{equation}
where $\pi_p$ is defined in \eqref{eq:pip} and $D := \operatorname{diam}(\Omega)$ is the diameter of $\Omega$.
\end{lemma}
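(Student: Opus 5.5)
The plan is to deduce the estimate from the sharp Payne–Weinberger-type inequality of Ferone–Nitsch–Trombetti, after reducing it to one dimension through a needle decomposition. Log-concave weights are stable under that reduction, and the model one-dimensional problem has constant $(p-1)(\pi_p/D)^p$.

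\textbf{Step 1: normalizations.} By density, it suffices to treat $f$ Lipschitz on $\overline\Omega$ and $\omega$ smooth, positive and log-concave on a slightly smaller convex set. The general case then follows by monotone or dominated convergence, using that $\omega$ is locally bounded and that $\omega\,dx$ is a finite measure. Next, choose $c$ to be the unique minimizer of $c\mapsto\int_\Omega|f-c|^p\omega\,dx$. Its optimality condition reads
\[
\int_\Omega \phi_p(f-c)\,\omega\,dx=0 .
\]

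\textbf{Step 2: needle decomposition.} Apply the localization lemma (Payne–Weinberger bisection, or the Klartag needle decomposition) to the balancing condition of Step~1. This splits $\Omega$ into thin convex pieces that collapse to segments $J_\alpha$ of length at most $D$. On each piece the induced measure is $\omega_\alpha(t)\,dt$, where $\omega_\alpha$ is log-concave; it is in fact a product of $\omega$ with a power of an affine function, and both factors are log-concave. Each piece also carries its own balancing condition
\[
\int_{J_\alpha}\phi_p(f-c)\,\omega_\alpha\,dt=0 .
\]
Since $|\partial_t f|\le|\nabla f|$, it then suffices to prove the one-dimensional inequality
\[
\int_J|g'|^p w\,dt\ \ge\ (p-1)\left(\frac{\pi_p}{L}\right)^p\int_J|g|^p w\,dt
\]
whenever $\int_J\phi_p(g)w=0$, $w$ is log-concave and $|J|=L\le D$. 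Integrating over $\alpha$ then gives the claim.

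\textbf{Step 3: the one-dimensional Neumann problem.} This is the main obstacle. For a log-concave weight $w=e^{-\psi}$ with $\psi$ convex, the minimizer $g$ satisfies
\[
-(w\,\phi_p(g'))'=\Lambda\,w\,\phi_p(g)
\]
with Neumann conditions at the endpoints. I would use a Prüfer-type substitution with the $p$-trigonometric functions $\sin_p$ and $\cos_p$, writing $g=\rho\sin_p\theta$ and $\phi_p(g')$ accordingly. The drift term $-\psi'\phi_p(g')$ enters the equation for $\theta'$ with a sign determined by $\psi''\ge0$. A comparison argument then shows that $\theta$ must advance by at least $\pi_p$ across $J$ at speed at most $(\Lambda/(p-1))^{1/p}$. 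This gives
\[
L\,(\Lambda/(p-1))^{1/p}\ \ge\ \pi_p ,
\]
which is exactly the desired bound. The delicate point is handling the drift so that it only helps. The strategy I would try first is to split $J$ at the zero of $g$, so that the first-eigenvalue problem on each half has mixed Dirichlet–Neumann conditions. I would then compare each half with the unweighted case via a Picone-type identity, exploiting $\psi''\ge0$.

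Since this is precisely \cite[Theorem~1.1]{ferone-nitsch-trombetti}, in the paper one would simply cite it. The plan above indicates how that proof runs.
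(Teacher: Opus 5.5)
Your Step~1 together with your closing remark is exactly the paper's proof. The paper takes $c=a_f$, the weighted $p$-mean defined by $\int_\Omega\phi_p(f-a_f)\,\omega\,dx=0$, applies \cite[Theorem~1.1]{ferone-nitsch-trombetti} to $f-a_f$, and uses $\widehat\pi_p^{\,p}=(p-1)\pi_p^p$. As a proof by citation, your argument is fine.

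If Steps~2--3 are meant as an actual proof of that theorem, Step~3 does not work as described. Normalize the first nonconstant Neumann eigenfunction $g$ on $J=(a,b)$ to be increasing. Write $w=e^{-\psi}$, $k:=(\Lambda/(p-1))^{1/p}$, and $G(\theta):=\sin_p\theta\,\phi_p(\cos_p\theta)$. With $g=\rho\sin_p\theta$ and $g'=k\rho\cos_p\theta$, one gets
\[
\theta'=k-\frac{\psi'}{p-1}\,G(\theta),\qquad \theta(a)=-\frac{\pi_p}{2},\quad \theta(b)=\frac{\pi_p}{2}.
\]
So the bound $\theta'\le k$ fails on whichever side of the zero $t_0$ of $g$ has $\psi'\,G(\theta)<0$.

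Your fallback, comparing each half with the \emph{unweighted} mixed Dirichlet--Neumann problem, also fails. Take $p=2$, $J=(0,L)$ and $w=e^{-ct}$ with $c>0$. The eigenfunction is $e^{ct/2}\bigl(\frac c2\sin\frac{\pi t}{L}-\frac{\pi}{L}\cos\frac{\pi t}{L}\bigr)$, with $\Lambda=\frac{c^2}{4}+\frac{\pi^2}{L^2}$ and zero $t_0=\frac L\pi\arctan\frac{2\pi}{cL}\approx\frac2c$.
\begin{itemize}
\item On the short half $(0,t_0)$, $\Lambda<\pi^2/(4t_0^2)\approx\pi^2c^2/16$ for large $c$, so the comparison is false there.
\item On the long half the comparison holds, but it only gives $\Lambda\ge\pi^2/(4(L-t_0)^2)\approx\pi^2/(4L^2)$.
\end{itemize}
Hence no half-by-half comparison with zero drift can give $\pi_p/L$. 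The two halves must be coupled, so that the help from the drift on one side offsets the harm on the other.

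The missing idea is to compare with the \emph{constant} drift $c_0\in\partial\psi(t_0)$ rather than with zero drift. By convexity, $\psi'\le c_0$ on $(a,t_0)$, where $G(\theta)<0$, and $\psi'\ge c_0$ on $(t_0,b)$, where $G(\theta)>0$. Hence $\theta'\le k-\frac{c_0}{p-1}G(\theta)$ on all of $J$. The right-hand side must be positive on $[-\pi_p/2,\pi_p/2]$, since otherwise $\theta$ could not reach $\pi_p/2$. Integrate $\theta'/\bigl(k-\frac{c_0}{p-1}G(\theta)\bigr)\le1$ over $J$ and pair $\theta$ with $-\theta$, using that $G$ is odd. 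This gives
\[
L\ \ge\ \int_0^{\pi_p/2}\frac{2k\,d\theta}{k^2-\bigl(\frac{c_0}{p-1}G(\theta)\bigr)^2}\ \ge\ \frac{\pi_p}{k},
\]
that is, $\Lambda\ge(p-1)(\pi_p/L)^p$.

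A minor point in Step~2: after Payne--Weinberger bisection the needle densities are $\omega\,h^{N-1}$ with $h$ concave (by Brunn--Minkowski), not in general a power of an affine function. This is harmless, since only log-concavity is used.
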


\begin{proof}
For every \(f\in L^p(\Omega,\omega\,dx)\), let \(a_f\in\mathbb R\)
denote the unique weighted \(p\)-mean of \(f\), characterized by
\[
\int_\Omega
|f-a_f|^{p-2}(f-a_f)\omega\,dx=0.
\]
Applying \cite[Theorem~1.1]{ferone-nitsch-trombetti} to
\(g:=f-a_f\), we obtain
\[
\begin{aligned}
\int_\Omega|\nabla f|^p\omega\,dx
&=
\int_\Omega|\nabla g|^p\omega\,dx\geq
\left(\frac{\widehat\pi_p}{D}\right)^p
\inf_{c\in\mathbb R}
\int_\Omega|g-c|^p\omega\,dx\\
&=
(p-1)\left(\frac{\pi_p}{D}\right)^p
\inf_{c\in\mathbb R}
\int_\Omega|f-c|^p\omega\,dx,
\end{aligned}
\]
where $\widehat\pi_p$ is given in Lemma \ref{lem:weighted-L1-diameter}, which completes the proof.
\end{proof}

\subsection{Ground-State Stability and Fundamental Gap Estimates}

Throughout this subsection, we assume that $2 \le p < \infty$. We denote the diameter of $\Omega$ by $D := \operatorname{diam}(\Omega)$. Consider the energy functional
\[\mathcal{E}_V(w) := \int_\Omega\vert{}\nabla w\vert{}^p\,dx + \int_\Omega V(x)\vert{}w\vert{}^p\,dx, \qquad w\in W_0^{1,p}(\Omega),\]
and define the admissible set
\[\mathcal{K} := \left\{ w\in W_0^{1,p}(\Omega): \Vert{}w\Vert{}_{L^p(\Omega)}=1 \right\}.\]
Let $u_1 \in \mathcal{K}$ denote its associated positive first eigenfunction. 

\smallskip

We first combine the quantitative Picone identity associated with the first
eigenfunction \(u_1\) with the sharp weighted \(p\)-Poincar\'e inequality
for the log-concave measure \(u_1^p\,dx\).

\begin{proof}[\textbf{Proof of Theorem~\ref{thm:potential-poincare-stability}.}]
\textbf{(i).} We first take \(w\in C_c^\infty(\Omega)\). Testing the equation for
\(u_1\) with $\frac{|w|^p}{u_1^{p-1}}$
gives
\[
\mathcal E_V(w)-\lambda_{1,p}(\Omega,V)\|w\|_{L^p(\Omega)}^p
=
\int_\Omega
\mathcal C_p\left(
\nabla w,
u_1\nabla\left(\frac{w}{u_1}\right)
\right)\,dx.
\]
Hence, by \eqref{eq:cp-remainder-estimate},
\[
\mathcal E_V(w)-\lambda_{1,p}(\Omega,V)\|w\|_{L^p(\Omega)}^p
\geq
\mathfrak c_p
\int_\Omega
u_1^p
\left|
\nabla\left(\frac{w}{u_1}\right)
\right|^pdx.
\]

By Theorem~\ref{thm:first-eigenfunction-log-concavity},
the weight \(u_1^p\) is log-concave. Applying
Lemma~\ref{lem:weighted-poincare-log-concave-gap} with $f:=\frac{w}{u_1}$ and $\omega:=u_1^p,$
we obtain
\[
\begin{aligned}
\mathcal E_V(w)-\lambda_{1,p}(\Omega,V)\|w\|_{L^p(\Omega)}^p
&\geq
\mathfrak c_p(p-1)
\left(\frac{\pi_p}{D}\right)^p
\inf_{c\in\mathbb R}
\int_\Omega
\left|\frac{w}{u_1}-c\right|^pu_1^p\,dx\\
&=
\mathfrak c_p(p-1)
\left(\frac{\pi_p}{D}\right)^p
\inf_{c\in\mathbb R}
\|w-cu_1\|_{L^p(\Omega)}^p.
\end{aligned}
\]
The conclusion extends to every \(w\in W_0^{1,p}(\Omega)\) by density.
Indeed, let \(w_n\in C_c^\infty(\Omega)\) satisfy
\(w_n\to w\) strongly in \(W_0^{1,p}(\Omega)\). Since
\(V\in L^\infty(\Omega)\), $\mathcal E_V(w_n)\to\mathcal E_V(w)$ and $\|w_n\|_{L^p}\to\|w\|_{L^p}.$
Moreover, the distance to a fixed subset of a normed space is
\(1\)-Lipschitz; hence
\[
\operatorname{dist}_{L^p(\Omega)}
\bigl(w_n,\operatorname{span}\{u_1\}\bigr)
\to
\operatorname{dist}_{L^p(\Omega)}
\bigl(w,\operatorname{span}\{u_1\}\bigr).
\]
Passing to the limit yields the claim.

\smallskip

\textbf{(ii).} We first take \(w\in C_c^\infty(\Omega)\), write $w=u_1z.$
Testing the equation for \(u_1\) with \(u_1|z|^p\) gives the exact
ground-state identity
\[\int_\Omega|\nabla w|^p\,dx
-
\lambda_{1,p}(\Omega,0)\int_\Omega|w|^p\,dx
=
\int_\Omega
\mathcal C_p\left(
u_1\nabla z+z\nabla u_1,
u_1\nabla z
\right)\,dx.\]
Hence, by \eqref{eq:cpaboptimal}, we obtain 
\[
\int_\Omega|\nabla w|^p
-
\lambda_{1,p}(\Omega,0)\int_\Omega|w|^p
\geq
\frac{p}{2}
\int_\Omega
u_1^2|\nabla u_1|^{p-2}
|z|^{p-2}|\nabla z|^2\,dx.
\]
For $f_w=\operatorname{sgn}(z)|z|^{p/2}\in C_c^{0,1}(\Omega),$
we have $|\nabla f_w|^2
=
\frac{p^2}{4}|z|^{p-2}|\nabla z|^2.$
Thus
\[
\int_\Omega|\nabla w|^p
-
\lambda_{1,p}(\Omega,0)\int_\Omega|w|^p
\geq
\frac{2}{p}
\int_\Omega
u_1^2|\nabla u_1|^{p-2}|\nabla f_w|^2\,dx.
\]
Applying Theorem~\ref{thm:ground-state-weighted-spectral-gap} gives
the result. For general \(w\in W_0^{1,p}(\Omega)\), let
\[
w_n\in C_c^\infty(\Omega),
\qquad
w_n\to w
\quad\text{strongly in }W_0^{1,p}(\Omega).
\]
Then, we have
\[\int_\Omega |\nabla w_n|^p\,dx
-
\lambda_{1,p}(\Omega,0)
\|w_n\|_{L^p(\Omega)}^p
\geq
\frac{2}{p}\frac{1}{C_p^{\mathrm{opt}}}
\lambda_{1,p}(\Omega,0)^{\frac{p-2}{p}}
D^{-2}
\operatorname{Var}_{\mu}(f_{w_n}),\]
The left-hand side converges to the corresponding expression for
\(w\). To pass to the right-hand side, set $T(t):=\operatorname{sgn}(t)|t|^{p/2}.$
Since $u_1^{p/2}f_{w_n}=T(w_n),\,u_1^{p/2}f_w=T(w),$
and the Nemytskii map
\(T:L^p(\Omega)\to L^2(\Omega)\) is continuous, we have
\[
\|f_{w_n}-f_w\|_{L^2(\mu)}
=
\|T(w_n)-T(w)\|_{L^2(\Omega)}
\longrightarrow0.
\]
Since $\operatorname{Var}_\mu(f)
=
\operatorname{dist}_{L^2(\mu)}
\bigl(f,\operatorname{span}\{1\}\bigr)^2$
and the distance to a fixed subset is \(1\)-Lipschitz, $\operatorname{Var}_\mu(f_{w_n})
\rightarrow
\operatorname{Var}_\mu(f_w).$
Passing to the limit yields the claim.
\end{proof}

We use the intermediate value theorem for the nonlinear moment along
paths joining \(u_1\) to \(-u_1\), together with the strict convexity of
the distance to the space spanned by \(u_1\).

\begin{lemma}
\label{lem:nonlinear-orthogonal-section}
Let $u_1$ be the positive first eigenfunction. Define
\[
\mathcal H_{u_1}
:=
\left\{
w\in\mathcal K:
\int_\Omega |w|^{p-2}wu_1\,dx=0
\right\}.
\]
Then every path
\(\gamma\in\Gamma(u_1,-u_1)\) intersects
\(\mathcal H_{u_1}\). Moreover, for every
\(w\in\mathcal H_{u_1}\),
\begin{equation}\label{eq:distance-one-on-orthogonal-section}
\inf_{c\in\mathbb R}
\|w-cu_1\|_{L^p(\Omega)}^p
=
1.
\end{equation}
\end{lemma}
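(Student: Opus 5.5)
The argument has two independent parts: an intermediate value argument for the intersection claim, and a first-order optimality argument for the distance identity.

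\emph{Intersection.} Given $\gamma\in\Gamma(u_1,-u_1)$, consider the scalar function
\[
h(s):=\int_\Omega|\gamma(s)|^{p-2}\gamma(s)\,u_1\,dx,\qquad s\in[0,1].
\]
The Nemytskii map $w\mapsto|w|^{p-2}w$ is continuous from $L^p(\Omega)$ to $L^{p'}(\Omega)$. The path $\gamma$ is continuous into $\mathcal K\subset W_0^{1,p}(\Omega)\hookrightarrow L^p(\Omega)$, and $u_1\in L^p(\Omega)$. Hence $h$ is continuous on $[0,1]$. At the endpoints,
\[
h(0)=\int_\Omega u_1^p\,dx=1,\qquad h(1)=-1.
\]
The intermediate value theorem gives some $s_*$ with $h(s_*)=0$. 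Since $\gamma(s_*)\in\mathcal K$, this means $\gamma(s_*)\in\mathcal H_{u_1}$.

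\emph{Distance identity.} Fix $w\in\mathcal H_{u_1}$ and set
\[
\Phi(c):=\|w-cu_1\|_{L^p(\Omega)}^p,\qquad c\in\mathbb R.
\]
Because $p>1$, $\Phi$ is convex and $C^1$. Differentiating under the integral sign, which is justified by dominated convergence using $|w|+|c|u_1\in L^p$ and H\"older's inequality, gives
\[
\Phi'(c)=-p\int_\Omega|w-cu_1|^{p-2}(w-cu_1)\,u_1\,dx.
\]
The condition $w\in\mathcal H_{u_1}$ says exactly that $\Phi'(0)=0$. A convex differentiable function attains its global minimum at any critical point, so
\[
\inf_{c\in\mathbb R}\Phi(c)=\Phi(0)=\|w\|_{L^p(\Omega)}^p=1,
\]
which is \eqref{eq:distance-one-on-orthogonal-section}. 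Strict convexity of $\Phi$ also holds, because $u_1>0$ and $w$ is not a multiple of $u_1$ (the moment of $cu_1$ against $u_1$ is nonzero for $c\neq0$). This gives uniqueness of the minimizer, but uniqueness is not needed here.

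\emph{Main obstacle.} The only delicate points are the continuity of $h$ and the differentiation of $\Phi$. Both follow from standard Nemytskii-operator continuity in $L^p$, so I expect no serious difficulty.
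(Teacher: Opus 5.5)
Your proposal is correct and follows essentially the same route as the paper: the intermediate value theorem applied to the moment $s\mapsto\int_\Omega|\gamma(s)|^{p-2}\gamma(s)u_1\,dx$, followed by convexity of $c\mapsto\|w-cu_1\|_{L^p(\Omega)}^p$ with vanishing derivative at $c=0$. You also justify the continuity and the differentiation under the integral sign, which the paper leaves implicit.
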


\begin{proof}
For \(\gamma\in\Gamma(u_1,-u_1)\), define
\[
F(s)
:=
\int_\Omega
|\gamma(s)|^{p-2}\gamma(s)u_1\,dx.
\]
Then \(F\) is continuous and satisfies \(F(0)=1\) and \(F(1)=-1\).
Hence \(F(s_0)=0\) for some \(s_0\in(0,1)\).

If \(w\in\mathcal H_{u_1}\), the function $\Phi(c):=\|w-cu_1\|_{L^p(\Omega)}^p$
is strictly convex and satisfies $\Phi'(0)
=
-p\int_\Omega|w|^{p-2}wu_1\,dx=0.$
Thus \(c=0\) is its unique minimizer, and
\[
\inf_{c\in\mathbb R}
\|w-cu_1\|_{L^p(\Omega)}^p
=
\|w\|_{L^p(\Omega)}^p=1,
\]
which completes the proof.
\end{proof}

We now combine the nonlinear orthogonal Lemma~\ref{lem:nonlinear-orthogonal-section} with the stability estimate in Theorem~\ref{thm:potential-poincare-stability} and the mountain-pass
characterization of the second eigenvalue.

\begin{proof}[\textbf{Proof of Theorem~\ref{thm:dimension-free-higher-dimensional-gap}.}]
\textbf{(i).} Let \(\gamma\in\Gamma(u_1,-u_1)\). By
Lemma~\ref{lem:nonlinear-orthogonal-section}, there exists
\(s_\gamma\in(0,1)\) such that $w_\gamma:=\gamma(s_\gamma)\in\mathcal H_{u_1}.$
Since \(w_\gamma\in\mathcal K\) and $\inf_{c\in\mathbb R}
\|w_\gamma-cu_1\|_{L^p(\Omega)}^p=1,$
Theorem~\ref{thm:potential-poincare-stability} yields
\[
\mathcal E_V(w_\gamma)
\geq
\lambda_{1,p}(\Omega,V)+
\mathfrak c_p(p-1)
\left(\frac{\pi_p}{D}\right)^p.
\]
Therefore,
\[
\max_{s\in[0,1]}\mathcal E_V(\gamma(s))
\geq
\lambda_{1,p}(\Omega,V)+
\mathfrak c_p(p-1)
\left(\frac{\pi_p}{D}\right)^p.
\]
Taking the infimum over
\(\gamma\in\Gamma(u_1,-u_1)\) and using
\eqref{eq:lambda2-mountain-pass-potential} proves
the desired inequality. 

\smallskip

\textbf{(ii).} Define $H(w)
:=
\int_\Omega
u_1^{p/2}\operatorname{sgn}(w)|w|^{p/2}\,dx.$
The continuity of
\[
T(w)=\operatorname{sgn}(w)|w|^{p/2}
:
L^p(\Omega)\longrightarrow L^2(\Omega)
\]
shows that \(H\) is continuous on \( \mathcal K\).  Moreover, $H(u_1)=1,H(-u_1)=-1.$
Hence, for every \(\gamma\in\Gamma(u_1,-u_1)\), there exists \(t_0\in(0,1)\) such
that $H(\gamma(t_0))=0.$
Set $w:=\gamma(t_0)$ and $f:=f_w.$
Since $u_1^{p/2}f
=
\operatorname{sgn}(w)|w|^{p/2},$
we obtain
\[
\int_\Omega f\,d\mu
=
\int_\Omega
u_1^{p/2}\operatorname{sgn}(w)|w|^{p/2}\,dx
=0,
\]
while $\int_\Omega f^2\,d\mu
=
\int_\Omega|w|^p\,dx
=1.$
Therefore \[\operatorname{Var}_{\mu}(f)=
\inf_{c\in\mathbb R}
\int_\Omega |f-c|^2\,d\mu=
\int_\Omega
\left|f-\int_\Omega f\,d\mu\right|^2\,d\mu=1.\]
Theorem~\ref{thm:potential-poincare-stability} yields
\[
\int_\Omega|\nabla w|^p\,dx
-\lambda_{1,p}(\Omega,0)
\geq
\frac{2}{p}\frac{1}{C_p^{\mathrm{opt}}}
\lambda_{1,p}(\Omega,0)^{\frac{p-2}{p}}
D^{-2}.
\]
Consequently, we obtain that
\[
\lambda_{2,p}(\Omega,0)
-
\lambda_{1,p}(\Omega,0)
\geq
\frac{2}{p}\frac{1}{C_p^{\mathrm{opt}}}
\lambda_{1,p}(\Omega,0)^{\frac{p-2}{p}}
D^{-2},
\]
which proves \eqref{eq:enhanced-fundamental-gap}.
\end{proof}

\subsection{Existence and Degeneration of Fundamental-Gap Minimizers}

We now establish the existence and degeneration properties of fundamental-gap minimizers stated in Theorem~\ref{thm:existence-gap-minimizer}.

\begin{proof}[\textbf{Proof of Theorem~\ref{thm:existence-gap-minimizer}.}]
The cases \(N=1\), \(1<p<2\), and \(p=2\) follow respectively from
Theorem~\ref{thm:sharp-one-dimensional-fundamental-gap},
Proposition~\ref{prop:smooth-quadratic-threshold-collapse}, and the
classical sharp fundamental-gap result for the Laplacian in
\cite{AndrewsClutterbuck2011}. It therefore remains only to consider the
case \(N\geq2\) and \(p>2\). Since both the diameter and the eigenvalues are invariant under
translations, we shall translate the domains when necessary and assume,
without loss of generality, that the origin lies in their interior.

By the scaling relations $\lambda_{k,p}(t\Omega,0)
=
t^{-p}\lambda_{k,p}(\Omega,0),\,k=1,2$,
it is enough to minimize \(\Gamma_p(\Omega,0)\) under the normalization
\(\operatorname{diam}(\Omega)=1\). The positivity of
\(\mathcal G_{p,N}\) follows from
Theorem~\ref{thm:dimension-free-higher-dimensional-gap} (ii), while its
finiteness follows by choosing any fixed bounded convex domain in the
admissible class. Moreover, by Theorem~\ref{thm:dimension-free-higher-dimensional-gap} (ii)
\begin{equation}
\label{eq:enhanced-gap-arbitrary-convex}
\Gamma_p(\Omega,0)
\geq
c_p\lambda_{1,p}(\Omega,0)^{\frac{p-2}{p}}
\operatorname{diam}(\Omega)^{-2}
\end{equation}
for every \(\Omega\in\mathcal C_N\). Let \(\{\Omega_j\}_{j\geq1}\subset\mathcal C_N\) be a minimizing sequence,
normalized so that
\[
\operatorname{diam}(\Omega_j)=1,
\qquad
\Gamma_p(\Omega_j,0)\longrightarrow\mathcal G_{p,N}.
\]
In particular, for some \(M>0\), $\Gamma_p(\Omega_j,0)\leq M.$
Since \(p>2\), \eqref{eq:enhanced-gap-arbitrary-convex} yields
\[
\lambda_{1,p}(\Omega_j,0)
\leq
\left(\frac{M}{c_p}\right)^{\frac{p}{p-2}}
=: \Lambda.
\]
Let \(r_j\) denote the inradius of \(\Omega_j\), by the
\(p\)-Laplacian Hersch--Protter inequality
\cite[Theorem~1.2]{Brasco2018Inradius}, $\lambda_{1,p}(\Omega_j,0)
\geq
c_p^{\mathrm{HP}}\,r_j^{-p},$
where \(c_p^{\mathrm{HP}}>0\) depends only on \(p\). Consequently, $r_j
\geq r_0,$ where $r_0>0.$
Thus the minimizing sequence cannot collapse. After translations, we may assume that
\[
0\in\Omega_j
\qquad\text{and hence}\qquad
\overline{\Omega_j}\subset\overline{B_1(0)}.
\]
By the Blaschke selection theorem
\cite[Theorem~1.8.5]{Schneider2014}, after passing to a subsequence, $\overline{\Omega_j}\longrightarrow K$
in the Hausdorff metric for some compact convex set \(K\).
Moreover, $\operatorname{diam}(K)=1.$
For each \(j\), choose \(x_j\) such that $B_{r_0}(x_j)\subset\Omega_j.$
After taking a further subsequence, \(x_j\to x_*\), and Hausdorff
convergence gives $B_{r_0}(x_*)\subset K.$
Hence \(K\) has nonempty interior. Setting $\Omega_*:=\operatorname{int}K,$
we obtain \(\Omega_*\in\mathcal C_N\) and $\operatorname{diam}(\Omega_*)=1.$

After translating by \(-x_*\), we
may assume that $\overline{B_{r_0}(0)}\subset K.$
Set $\delta_j
:=
d_H(\overline{\Omega_j},K)
\rightarrow0,$
and let \(h_j\) and \(h\) denote the support functions of
\(\overline{\Omega_j}\) and \(K\), respectively. Since the Hausdorff
distance between compact convex sets is characterized by their support
functions,
\[
\|h_j-h\|_{L^\infty(\mathbb S^{N-1})}
=
\delta_j.
\]
Moreover, the inclusion
\(\overline{B_{r_0}(0)}\subset K\) implies $h(\theta)\geq r_0$ for every $\theta\in\mathbb S^{N-1}.$
Hence $|h_j-h|
\leq
\delta_j
\leq
\frac{\delta_j}{r_0}h,$
and therefore, with $\eta_j:=\frac{\delta_j}{r_0}\rightarrow0,$
we have $(1-\eta_j)h
\leq
h_j
\leq
(1+\eta_j)h.$
By the characterization of inclusion in terms of support functions, see \cite[Section 1.7.1]{Schneider2014}, for
all sufficiently large \(j\),
\[
(1-\eta_j)K
\subset
\overline{\Omega_j}
\subset
(1+\eta_j)K.
\]
Since \(\Omega_*=\operatorname{int}K\), it follows that $(1-\eta_j)\Omega_*
\subset
\Omega_j
\subset
(1+\eta_j)\Omega_*.$

By domain monotonicity and the scaling relation for the variational
\(p\)-eigenvalues, for \(k=1,2\),
\[
(1+\eta_j)^{-p}\lambda_{k,p}(\Omega_*,0)
\leq
\lambda_{k,p}(\Omega_j,0)
\leq
(1-\eta_j)^{-p}\lambda_{k,p}(\Omega_*,0).
\]
Since \(\eta_j\to0\), letting \(j\to\infty\) yields $\lambda_{k,p}(\Omega_j,0)
\rightarrow
\lambda_{k,p}(\Omega_*,0),\,k=1,2.$
Consequently, $\Gamma_p(\Omega_j,0)
\rightarrow
\Gamma_p(\Omega_*,0),$ we conclude that $\Gamma_p(\Omega_*,0)
=
\mathcal G_{p,N}.$

\smallskip

It remains to prove the two assertions concerning the limit \(p\downarrow2\).
We first show that $\limsup_{p\downarrow2}\mathcal G_{p,N}\leq3\pi^2.$ For each fixed \(\varepsilon>0\), let \(\Omega_\varepsilon\) be one of the
collapsing convex domains from
Proposition~\ref{prop:smooth-quadratic-threshold-collapse}, and write
\(D_\varepsilon=\operatorname{diam}(\Omega_\varepsilon)\). Since
\(\Omega_\varepsilon\) is fixed, the variational \(p\)-eigenvalues are
continuous with respect to \(p\); see
\cite{DegiovanniMarzocchi2015}. Hence, for \(k=1,2\), $\lambda_{k,p}(\Omega_\varepsilon,0)
\rightarrow
\lambda_{k,2}(\Omega_\varepsilon,0)$ as $p\downarrow2$
By the definition of \(\mathcal G_{p,N}\), $\mathcal G_{p,N}
\leq
D_\varepsilon^p\Gamma_p(\Omega_\varepsilon,0),$
and therefore
\[
\limsup_{p\downarrow2}\mathcal G_{p,N}
\leq
D_\varepsilon^2\Gamma_2(\Omega_\varepsilon,0).
\]
Letting \(\varepsilon\downarrow0\) and using
Proposition~\ref{prop:smooth-quadratic-threshold-collapse}, we obtain $\limsup_{p\downarrow2}\mathcal G_{p,N}
\leq
3\pi^2.$

\smallskip

We finally prove that every family of normalized minimizers satisfies $r_{\Omega_p^*}\rightarrow0$ as $p\downarrow2.$
Suppose otherwise. Then there exist \(p_j\downarrow2\), \(r_0>0\), and
minimizers
\[
\Omega_j:=\Omega_{p_j}^*,
\qquad
\operatorname{diam}(\Omega_j)=1,
\qquad
r_{\Omega_j}\geq r_0.
\]
After translating an incenter of each \(\Omega_j\) to the origin, we have $B_{r_0}(0)\subset\Omega_j\subset B_1(0).$
By the Blaschke selection theorem \cite[Theorem~1.8.5]{Schneider2014}, after passing to a subsequence, $\overline{\Omega_j}\rightarrow K$
in the Hausdorff metric, where \(K\) is compact and convex. Moreover, $B_{r_0}(0)\subset K$ and $\operatorname{diam}(K)=1.$
Thus $\Omega_\infty:=\operatorname{int}K$
is a nondegenerate bounded convex domain with
\(\operatorname{diam}(\Omega_\infty)=1\).

As in the preceding compactness argument, there exists
\(\eta_j\downarrow0\) such that
\[
(1-\eta_j)\Omega_\infty
\subset
\Omega_j
\subset
(1+\eta_j)\Omega_\infty.
\]
Hence, by domain monotonicity and scaling,
\[
(1+\eta_j)^{-p_j}
\lambda_{k,p_j}(\Omega_\infty,0)
\leq
\lambda_{k,p_j}(\Omega_j,0)
\leq
(1-\eta_j)^{-p_j}
\lambda_{k,p_j}(\Omega_\infty,0),
\qquad k=1,2.
\]
Since \(\Omega_\infty\) is a fixed convex domain, the continuity of the
variational eigenvalues with respect to \(p\)
\cite{DegiovanniMarzocchi2015} gives $\lambda_{k,p_j}(\Omega_\infty,0)
\rightarrow
\lambda_{k,2}(\Omega_\infty,0),\,k=1,2.$
Consequently, $\mathcal G_{p_j,N}
=
\Gamma_{p_j}(\Omega_j,0)
\rightarrow
\Gamma_2(\Omega_\infty,0).$
On the other hand, since \(\Omega_\infty\) is a nondegenerate
\(N\)-dimensional convex domain, Theorem~1 of
\cite{AmatoBucurFragala2024} yields
\[
\Gamma_2(\Omega_\infty,0)
\geq
3\pi^2
+
c_N w_{\Omega_\infty}^6
>
3\pi^2,
\]
where we used \(\operatorname{diam}(\Omega_\infty)=1\) and
\(w_{\Omega_\infty}>0\). This contradicts $\limsup_{p\downarrow2}\mathcal G_{p,N}\leq3\pi^2.$
Therefore, we complete the proof.
\end{proof}

	% ================= Acknowledgements =================

\noindent\textbf{Acknowledgements.}
The authors would like to express their sincere gratitude to Professor Bobo Hua for his valuable guidance, insightful suggestions, and continuous encouragement throughout the development of this work.

\medskip

\noindent\textbf{Conflict of interest.} The authors declare that they have no conflict of interest.

\medskip

\noindent\textbf{Data availability.} No datasets were generated or analyzed during the current study.

\medskip

\noindent\textbf{AI assistance statement.}
The authors used OpenAI models as assistive tools in preparing this manuscript. All mathematical arguments,
proofs, and verifications were carried out by the authors, who take full
responsibility for the content of the paper.

\bigskip

\small
	
	\noindent\textit{Rui Chen}: School of Mathematical Sciences, Fudan University,\\[1mm]
		Shanghai 200433,  China\\[2mm]
		Brandenburg University of Technology Cottbus--Senftenberg,\\[1mm]
		Cottbus 03046, Germany\\[1mm]
		\noindent\emph{Email:} \texttt{chenrui23@m.fudan.edu.cn}\\[3mm]

	\noindent\textit{Daniel Hauer }:
         Brandenburg University of Technology Cottbus–Senftenberg,\\[1mm]
		Platz der Deutschen Einheit 1, 03046 Cottbus, Germany\\[2mm]
		 School of Mathematics and Statistics, The University of Sydney,\\[1mm]
		NSW 2006, Australia \\[1mm]
      	\noindent\emph{Email:} \texttt{daniel.hauer@b-tu.de}\\[3mm]

\vspace{1em}

 \end{document}